\numberwithin{equation}{section}
\newtheorem{theorem}{Theorem}[section]
\newtheorem{lemma}{Lemma}[section]
\newtheorem{corollary}{Corollary}[section]
\newtheorem{proposition}{Proposition}[section]
\newtheorem{definition}{Definition}[section]
\newtheorem{remark}{Remark}[section]
\newcommand\eps{\varepsilon}
\newcommand\R{{\mathbb{R}}}
\newcommand\C{{\mathbb{C}}}
\begin{document}
\title[Mass critical 4NLS in high dimensions]
{The mass-critical fourth-order Schr\"odinger equation in high dimensions}
\author{Benoit Pausader}
\address{Mathematics Department,
Box 1917,
Brown University,
Providence, RI 02912}
\email{Benoit.Pausader@math.brown.edu}
\author{Shuanglin Shao}
\address{Institute for Advance Study, Princeton, NJ 08540}
\email{slshao@math.ias.edu}
\date{\today}

\keywords{Fourth-order dispersive equations, Fourth-order Schr\"odinger equation, Global wellposedness, Scattering}
\subjclass[2000]{35Q55}

\begin{abstract}
We prove global wellposedness and scattering for the Mass-critical homogeneous fourth-order Schr\"odinger equation in high dimensions $n\ge 5$, for general $L^2$ initial data in the defocusing case, and for general initial data with Mass less than certain fraction of the Mass of the Ground State in the focusing case.
\end{abstract}

\maketitle

\section{Introduction}
The fourth-order Schr\"odinger equations have been introduced by Karpman \cite{Kar} and Karpman and Shagalov \cite{KarSha} to take into account the role of ``small fourth-order dispersion" in the propagation of intense laser beams in a bulk medium with Kerr nonlinearity. These equations are defined as follows,
\begin{equation}\label{eq:4NLS-general-1}
i\partial_t u+ \Delta u+|u|^{2\sigma}u+\eps\Delta^2 u=0, \quad u: \R\times \R^n\to \C.
\end{equation}
When $\varepsilon=0$, $n=2$ and $\sigma=1$, this corresponds to the canonical model. When $2<n\sigma <4$, Equation \eqref{eq:4NLS-general-1} can be viewed as a combination of $L^2$ super-critical second-order Schr\"odinger and sub-critical fourth-order Schr\"odinger equation. In this case, Karpman and Shagalov \cite{KarSha} showed, among other things, that the waveguides induced by the nonlinear Schr\"odinger equation become stable when $\vert\varepsilon\vert$ is taken sufficiently large. Then Equation \eqref{eq:4NLS-general-1} is predominantly governed by the corresponding fourth-order equation,
\begin{equation}\label{eq:4NLS-general-2}
i\partial_tu+|u|^{2\sigma}u+\eps\Delta^2u=0.
\end{equation}
Under a suitable change of variable in time, the $L^2$-critical, or Mass-critical, homogeneous case of these equations is
\begin{equation}\label{4NLS}
i\partial_tu+\Delta^2u+\lambda\vert u\vert^\frac{8}{n}u=0,
\end{equation}
where $\lambda=\pm 1$. For $\lambda=1$, it is called ``defocusing" while focusing for $\lambda=-1$. The terminology ``Mass-critical" is due to the fact that both the Mass $M(u)$ defined by
\begin{equation}\label{DefOfMass}
M(u):=\int_{\mathbb{R}^n}\vert u(t,x)\vert^2dx
\end{equation}
and the equation itself are invariant under the rescaling symmetry $$u(t,x)\mapsto \lambda^{n/2}u(\lambda^4 t,\lambda x)$$ for $\lambda>0$. Note that the Mass is conserved by the flow, hence we do not specify time in the notation.

Equation \eqref{eq:4NLS-general-2} has been recently investigated in Fibich, Ilan, and Papanicolaou \cite{FibIlaPap}. They showed that, when $0<\sigma n<4$, any initial data in $L^2$ gives rise to a global solution. In the $L^2$-critical case we discuss here, much less is known. Numerics suggest that in the focusing case $\lambda<0$, there exist solutions that blow up in finite time, while it is conjectured that in the defocusing case any initial data with finite mass leads to a global solution. In this paper, we give a partial positive answer to this question.

Semilinear fourth order Schr\"odinger equations similar to \eqref{4NLS} have been widely investigated. Fibich, Ilan and Papanicolaou \cite{FibIlaPap} give general results of wellposedness in $H^2$. Pausader \cite{Pau2}  and Segata \cite{Seg3} study the cubic case. For the Energy-critical case with nonlinearity given by $F(u)=\vert u\vert^{8/(n-4)}u$, we refer to Miao, Xu and Zhao \cite{MiaXuZha2} and Pausader \cite{Pau1,Pau2}  for the defocusing case and Miao, Xu and Zhao \cite{MiaXuZha} and Pausader \cite{PauFoc} for the focusing case with radially symmetrical initial data. For the Mass-critical case we discuss here, we refer to Chae, Hong and Lee \cite{ChaHonLee} for a result about the concentration of blow-up solutions. In \cite{JiangPauSha}, Jiang-Pausader-Shao were able to establish a precise linear profile decomposition analogous to that in \cite{Shao2} which takes into account the frequency parameter.

A related equation also appears in the study of the motion of a filament of vortex in an inviscid fluid as in Fukumoto and Mofatt \cite{FukMof}, Huo and Jia \cite{HuoJia,HuoJia2} and Segata \cite{Seg1,Seg2}.

The question of global wellposedness and scattering for Mass-critical or Energy critical Schr\"odinger (NLS) equations (Equation \eqref{eq:4NLS-general-1} with $\eps =0$ for suitable $\sigma$) has been the subject to many works recently, most notably by Bourgain \cite{Bourgain:1999:radial-NLS}, Colliander, Keel, Staffilanni, Takaoka and Tao \cite{ColKeeStaTakTao}, Kenig and Merle \cite{KenMer} and Killip and Visan \cite{KilVis} for the Energy-critical case and Tao, Visan and Zhang \cite{TaoVisZha2}, Killip, Tao and Visan \cite{KilTaoVis} for the Mass-critical case. We refer readers to the survey by Killip and Visan \cite[p. 6-8]{KilVis2} for a detailed account. In the Mass-critical context, for radial initial data in dimensions $n\ge 2$, the authors in \cite{TaoVisZha2, KilTaoVis, KilVisZha} were able to establish global well-posedness and scattering for second-order Schr\"odinger equation.  In this paper, we investigate the analogous question for \eqref{4NLS}. Our first result in this paper asserts that in the defocusing case in high dimensions, global wellposedness and scattering hold for equation \eqref{4NLS} even for nonradial initial data.
\begin{theorem}\label{MainThm}
Let $n\ge 5$ and $\lambda=1$. Then for any initial data $u_0\in L^2$, there exists a unique solution of \eqref{4NLS} $u\in C(\mathbb{R},L^2)\cap L^{\frac{2(n+4)}{n}}(\mathbb{R}\times\mathbb{R}^n)$ such that $u(0)=u_0$. Besides this solution scatters in the sense that there exist two elements $\omega^\pm\in L^2$ such that
\begin{equation}\label{ScatStatement}
\Vert u(t)-e^{it\Delta^2}\omega^\pm\Vert_{L^2}\to 0
\end{equation}
as $t\to \pm\infty$.
\end{theorem}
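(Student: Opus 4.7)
The plan is to follow the Kenig--Merle concentration-compactness/rigidity scheme, adapted to the mass-critical setting as in Killip--Tao--Visan, Tao--Visan--Zhang and Killip--Visan--Zhang for the second-order NLS. First I would set up the linear theory: Strichartz estimates for the fourth-order propagator $e^{it\Delta^2}$ on $\R\times\R^n$ and the resulting local wellposedness in $L^2$ together with small-data scattering in the Strichartz space $L^{2(n+4)/n}_{t,x}$. This gives a stability theory allowing perturbations around approximate solutions, which is the workhorse throughout. Define $m_c$ to be the supremum of masses $M$ such that every $L^2$ initial datum with mass $\le M$ generates a global solution with uniformly bounded scattering norm; small-data theory gives $m_c>0$, and Theorem \ref{MainThm} is equivalent to $m_c=+\infty$. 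Assume, for contradiction, that $m_c<\infty$.

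Next I would produce a minimal-mass non-scattering solution. Take a sequence $u_{0,k}$ with $M(u_{0,k})\to m_c$ and with scattering norms of the associated solutions diverging. Apply the linear profile decomposition of Jiang--Pausader--Shao to $u_{0,k}$; this decomposition is the analogue of Shao's for NLS but carries an additional frequency parameter $N_k$, reflecting that the symmetry group of the linear fourth-order flow (translations, rescaling, and large frequency shifts) is larger than for NLS. Combined with the stability theory and the definition of $m_c$, a standard argument collapses the decomposition to a single profile, producing a global minimal-mass solution $u_c$ whose orbit $\{u_c(t)\}$ is precompact in $L^2$ modulo the symmetries $(x_0(t),\xi_0(t),N(t))\in\R^n\times\R^n\times(0,\infty)$. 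A further reduction using the compactness, together with a local constancy argument for $N(t)$, narrows the possibilities to a finite list of enemies, classified by the long-time behavior of $N(t)$ and $\xi_0(t)$: a \emph{soliton-like} case ($N\sim 1$, $\xi_0\sim 0$), a \emph{self-similar} case ($N(t)\to\infty$ as $t\to 0$ and $N(t)=t^{-1/4}$ in forward time), and a \emph{frequency-cascade} (double high-to-low) case. The Galilean-type modulation on $\xi_0$ is removable by using that $\xi_0(t)$ is essentially constant on the natural time scale; alternatively one can show directly that $\xi_0(t)\equiv 0$ using conservation of momentum arguments tailored to the fourth-order symbol.

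To rule out the soliton-like scenario I would prove a frequency-localized interaction Morawetz inequality for \eqref{4NLS}. In the defocusing case $\lambda=1$, taking the Morawetz weight $a(x)=|x|$ (or a mollified version thereof) and computing $\partial_t^2\int a(x-y)|u(t,x)|^2|u(t,y)|^2\,dx\,dy$ along the fourth-order flow produces a positive quartic term of the form $\int a\,|u|^{2+8/n}$, and error terms involving higher derivatives of $a$. For the second-order NLS the leading term yields an $L^{2(n+1)/(n-1)}$ space-time bound; the fourth-order computation instead involves $\Delta^2 a$, whose negative Laplacian is $\Delta(|x|^{-1})$ times a constant and which (in dimensions $n\ge 5$) still produces a coercive contribution after frequency localization. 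Applied to the precompact $u_c$ and combined with the lower bound $\|u_c(t)\|_{L^{2+8/n}}\gtrsim 1$ coming from precompactness, the Morawetz estimate gives a contradiction with the uniform $L^2$ bound on $u_c$. The self-similar and frequency-cascade scenarios are eliminated by additional regularity / Mass-conservation arguments: one proves that any almost-periodic solution with $N(t)\to\infty$ must in fact lie in $H^s$ for every $s>0$, then uses conservation of energy (which forces $u_c\equiv 0$ in the cascade case) and a careful analysis of the self-similar blow-up scale (where the orbit's $L^2$ mass concentrates at a point, contradicting precompactness in $L^2$).

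The main obstacle I expect is the frequency-localized Morawetz step. Two issues are genuinely harder than for NLS: the biharmonic $\Delta^2$ turns the Morawetz identity into a computation involving fourth derivatives of the weight, so the positivity of the leading term is more delicate and works cleanly only in dimensions $n\ge 5$ (hence the hypothesis of the theorem); and the frequency localization has to be compatible with the extra frequency parameter $N(t)$ of the profile decomposition, so that the low-frequency or high-frequency portion one discards is truly negligible uniformly in time. Getting the error terms under the positive leading term, so that they can be absorbed after an appropriate choice of cutoff scale tied to $N(t)$, is the technical heart of the proof; the other enemies, once regularity is upgraded, follow from relatively standard arguments.
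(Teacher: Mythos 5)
Your overall scheme (Strichartz and local theory, minimal counterexample via concentration-compactness, reduction to three almost-periodic scenarios, rigidity) matches the paper's architecture, but several key points are off and would block the argument if pursued as stated.

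First, you assert that the linear profile decomposition carries an extra frequency parameter $\xi_0$ which must then be removed. This is backwards: $e^{it\Delta^2}$ is \emph{not} Galilean invariant, the symmetry group of \eqref{4NLS} is only scaling plus space--time translations, and the paper's Lemma \ref{le:linear-profile} accordingly has scale-cores $(h_k^j,t_k^j,x_k^j)$ with no modulation parameter. This is one of the simplifications the fourth-order setting affords, and it also forces the paper in the focusing case to confront travelling waves (nonzero momentum cannot be gauged away), a subtlety you do not address. Second, and more seriously, you miss the central technical mechanism of the proof: the abstract gain-of-regularity scheme (Section \ref{Sec-Abs}) combined with the \emph{Double Duhamel} trick (Lemma \ref{DDFLemma}), which upgrades the minimal element to $H^{2+1/(n+5)}$. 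This regularity upgrade is what makes conservation of Energy and Momentum available; without it neither the self-similar/cascade exclusion nor any Virial/Morawetz computation is legitimate, since those identities require $H^2$. You attribute the hypothesis $n\ge5$ to positivity of a fourth-order Morawetz weight, but the paper makes clear the restriction comes from the Double Duhamel step: one needs $t^{-n/4}$ to be integrable at infinity and the $n/2$-derivative gain to exceed $2$. Third, for the soliton scenario the paper does \emph{not} use a frequency-localized interaction Morawetz inequality; it uses a truncated one-directional Virial action $A_R(t)$ in a direction orthogonal to the conserved momentum (Proposition \ref{NoSolitonDefocusing}), which grows linearly in $t$ in the defocusing case while remaining bounded, giving the contradiction. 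Your Morawetz proposal is a plausible alternative route, but as sketched it conflates the issues (positivity in $n\ge 5$, frequency localization tied to $N(t)$) and, more importantly, cannot even be set up rigorously until the regularity upgrade you omit has been carried out.
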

As a consequence of our analysis, we prove that sequences of nonlinear solutions with bounded Mass have a well understood loss of compactness which is only due to the symmetries \eqref{DefTau} of the equation. We refer to Theorem \ref{LossOfCompThm} in Section \ref{Sec-3Sce} for a precise statement.

\begin{remark}
Thanks to the stronger dispersion for the fourth order Schr\"odinger equation, we can take off the radial assumption of the initial data.  This is in contrast with the results on the Mass-critical Schr\"odinger equation developed in Killip, Tao and Visan \cite{KilTaoVis}, Killip, Visan and Zhang \cite{KilVisZha} and Tao, Visan and Zhang \cite{TaoVisZha2} where any global wellposedness result for $L^2$-data holds only in the case of radially symmetrical data in dimensions at least two.
\end{remark}
\begin{remark}\label{re:reduction-to-an-inequ}
By a well-known reduction (see, e.g., Tao \cite{TaoBook}), global well-posedness and scattering \eqref{ScatStatement} reduce to the following \textit{a priori} bound
\begin{equation}\label{eq:reduction-to-an-inequ}
\|u\|_{ L^{\frac {2(n+4)}{n}}(\R\times \R^n)}\le C\bigl(\|u\|_{L^2}\bigr),
\end{equation}
where the constant only depends on the mass of the initial data, and in the focusing case, we also require that $M(u)<M(Q)$.
\end{remark}

Equation \eqref{4NLS} differs from its Mass-critical second-order NLS in several ways. First the dispersion relation reads $\omega(k)=\vert k\vert^4,$
which implies that high frequency waves move much faster than low frequency ones.\footnote{Indeed frequency $k$ has speed $v=4\vert k\vert^2k$ instead of $2k$.} So that, as is manifest from \eqref{StricEst}, we are able to gain some regularity. Secondly, Equation \eqref{4NLS} lacks Galilean invariance. Roughly speaking, this lack means that, unlike \eqref{DefTau}, the frequency modulation $u_0(x)\mapsto e^{ix\xi_0}u_0(x)$, for any  $\xi_0\neq 0\in \R^n$, is not a symmetry of the equation.
This affects us in two ways. On the one hand, the linear profile decomposition as in Lemma \ref{le:linear-profile} does not take this parameter into account, which makes the process of renormalization and search for minimal-mass counterexamples easier. On the other hand, even though the Momentum of solutions is conserved, we cannot set it to be zero as is the case for the second NLS, see e.g. Duyckaerts, Holmer and Roudenko \cite{DuyHolRou} or Killip and Visan \cite{KilVis2}. This requires that we deal with standing waves {\it and} travelling waves in Section \ref{SecFoc}.

In the focusing case $\lambda<0$, one cannot hope for such a global result as in Theorem \ref{MainThm}. Indeed, the existence of nontrivial solutions of the elliptic equation
\begin{equation}\label{EllipticEquation}
\Delta^2Q+ Q=\vert Q\vert^\frac{8}{n}Q
\end{equation}
provides solutions $u(t,x)=e^{-it}Q(x)$ which clearly do not scatter.\footnote{A solution of \eqref{EllipticEquation} with minimal $L^2$ norm, $\sqrt{M(Q)}$ is called a Ground State for this equation.} In fact in this case it is suspected (see e.g., the numerical work in Fibich, Ilan and Papanicolaou \cite{FibIlaPap})  that there exist smooth initial data which do not lead to a global solution. However, as proved in Fibich, Ilan and Papanicolaou \cite{FibIlaPap} and as is also similar to the case of the second order NLS (see Weinstein \cite{Wei,Wei2}), $H^2$-solutions with Mass $M(u)$ strictly smaller than the Mass of the Ground State $M(Q)$ are global. Hence in this case it is natural to expect that they also scatter.
Our other main results are concerned with this question. First, we give a positive answer in the case of high dimensions, when the solution is radially symmetrical. More precisely, we prove the following

\begin{theorem}\label{FocusingRadialThm}
Let $n\ge 5$ and $\lambda=-1$. Then for any radially symmetrical initial data $u_0\in L^2$ such that $M(u)<M(Q)$, there exists a unique solution of \eqref{4NLS} $u\in C(\mathbb{R},L^2)\cap L^{\frac{2(n+4)}{n}}(\mathbb{R}\times\mathbb{R}^n)$ such that $u(0)=u_0$. This solution scatters in the sense that there exist two elements $\omega^\pm\in L^2$ such that \eqref{ScatStatement} holds true as $t\to\pm\infty$.
\end{theorem}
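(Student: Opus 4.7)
The plan is to adapt the Kenig--Merle / Killip--Tao--Visan concentration-compactness and rigidity roadmap for mass-critical problems to the focusing biharmonic equation \eqref{4NLS}, using the sharp Gagliardo--Nirenberg coercivity below the ground state, together with radial symmetry, to rule out every possible obstruction to the a priori bound in Remark \ref{re:reduction-to-an-inequ}.

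\textbf{Step 1: extraction of a minimal-mass critical element.} Suppose \eqref{eq:reduction-to-an-inequ} fails for some radial $u_0$ with $M(u_0)<M(Q)$; then there is a minimal critical mass $0<M_c\le M(Q)$ at which the spacetime bound is violated. Feeding a sequence of radial solutions $u_n$ with $M(u_n)\to M_c$ and $\|u_n\|_{L^{2(n+4)/n}_{t,x}}\to\infty$ into Lemma \ref{le:linear-profile} and combining it with the small-data, long-time perturbation, and inverse Strichartz theory for the biharmonic propagator, I would extract a nonlinear minimal counterexample $u_c$. Because \eqref{4NLS} lacks Galilean invariance no frequency-translation parameter appears in the decomposition, and radiality removes the spatial-translation parameter. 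Hence $u_c$ is almost periodic in $L^2$ modulo a single scaling parameter $N\colon I_{\max}\to(0,\infty)$.

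\textbf{Step 2: coercivity from being below the ground state.} The sharp Gagliardo--Nirenberg inequality attached to \eqref{EllipticEquation} reads
\[
\frac{n}{n+4}\,\|v\|_{L^{2(n+4)/n}}^{\frac{2(n+4)}{n}}\le \Bigl(\frac{M(v)}{M(Q)}\Bigr)^{4/n}\|\Delta v\|_{L^2}^{2},
\]
so on any $H^{2}$ slice of $u_c$ the focusing nonlinearity is strictly dominated by the kinetic term, the energy is positive, and it controls $\|\Delta u_c\|_{L^2}^{2}$. This substitutes for the defocusing sign used in Theorem \ref{MainThm}.

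\textbf{Step 3: classification of $N(t)$.} Following the reduction of \cite{KilTaoVis,TaoVisZha2} adapted to the biharmonic time-scaling $t\mapsto\lambda^{4}t$, $u_c$ must fall into one of three classes: a self-similar solution on $(0,\infty)$ with $N(t)\sim t^{-1/4}$; a double high-to-low cascade on $\R$ with $\liminf_{|t|\to\infty}N(t)=0$; or a quasi-soliton on $\R$ with $N(t)\sim 1$. Cases (i) and (ii) are handled by the standard additional-regularity argument: using the Duhamel formula for $e^{it\Delta^{2}}$, the compactness modulo scaling, and the coercivity of Step 2, I would upgrade $u_c$ to $L^{\infty}_{t}H^{s}_{x}$ for some $s>0$; the cascade then forces $\|\Delta u_c(t_n)\|_{L^{2}}\to 0$ along a sequence, which combined with conservation of mass and the coercive Gagliardo--Nirenberg of Step 2 forces $u_c\equiv 0$, contradicting $M_c>0$.

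\textbf{Step 4: the quasi-soliton case and main obstacle.} The heart of the argument is the exclusion of the soliton-like scenario, and this is where I expect the main technical difficulty. I would run a frequency-localized virial/Morawetz estimate of Lin--Strauss type with a weight $a(x)$ adapted to $\Delta^{2}$, for example a smoothed version of $|x|$, whose commutator $[\,i\partial_{t}+\Delta^{2},\,a\,]$ produces a positive principal term at scale $N(t)\sim 1$ together with several fourth-order error terms. Bounding these errors is the crux: one must combine the uniform $H^{2}$-compactness obtained in Step 3, the radial Sobolev embedding (this is precisely where radiality is used, giving pointwise decay for functions localized at frequency $\sim 1$), and the coercivity of Step 2 to absorb the focusing contribution. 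Once these are controlled, the Morawetz output contradicts the uniform lower bound on the mass of $u_c$ on a fixed spatial ball, excluding the quasi-soliton and thus completing the contradiction; by Remark \ref{re:reduction-to-an-inequ} this yields Theorem \ref{FocusingRadialThm}.
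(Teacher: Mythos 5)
Your Steps 1--3 follow the paper's scheme: the minimal-mass critical element is extracted via the linear profile decomposition (Lemma \ref{le:linear-profile}) and the stability lemma (Lemma \ref{lemStab}), the three scenarios are Theorem \ref{3ScenariosThm}, and the self-similar and cascade cases are eliminated by Propositions \ref{nonexistenceOfSelfSimilarBlowUp} and \ref{NoCascadeProp} once the regularity upgrade of Proposition \ref{GainOfRegularityScenario23} is in hand. Two small inaccuracies there: the exponent in \eqref{Gag} is $\tfrac{8}{n}$, not $\tfrac{4}{n}$, and you claim an upgrade only to $H^{s}$ for \emph{some} $s>0$, whereas you need $s\ge 2$ to define the conserved energy and momentum and to run the virial argument; the paper gets $H^{2+\frac{1}{n+5}}$, which suffices.

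Step 4 is where you genuinely diverge, and where I see a gap. You propose a Lin--Strauss Morawetz with a smoothed $|x|$ weight plus radial Sobolev embedding. For a focusing equation this route is problematic: the nonlinear contribution in the Morawetz commutator carries the sign of $\lambda$, hence the wrong sign here, and it appears with the weight $|x|^{-1}$. The coercivity \eqref{Gag} compares the \emph{unweighted} $\Vert \Delta f\Vert_{L^2}$ with the unweighted $\Vert f\Vert_{L^{2(n+4)/n}}$, and it is not clear how to absorb a weighted potential of the form $\int |x|^{-1}|u|^{2(n+4)/n}\,dx$ into the weighted kinetic pieces. You flag this as ``the crux'' but leave it open, and I do not think it closes along these lines.

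The paper's exclusion of the soliton is simpler and uses radiality in a lighter way. It works with the truncated $|x|^2$-virial in the moving frame, $A_R(t)=2\,\hbox{Im}\int a(z_1/R)\,z_1\,\partial_1 u\,\bar u\,dx$ with $z=x-y(t)$, and sums over all coordinate directions. The identity \eqref{VirialArgument}, whose derivation is valid for either sign of $\lambda$, then yields (Proposition \ref{SolitonImpliesNonZeroMomentum})
\[
\partial_t A_R = 2\,\dot y\cdot\hbox{Mom}(u) - 16\,E(u) + o_u(1) \quad\text{as } R\to+\infty,
\]
and since $|A_R|\lesssim_u R$ uniformly in time, this integrates to $\bigl|y(t)\cdot\hbox{Mom}(u)-8E(u)\,t\bigr|\le C_{\varepsilon,u}+\varepsilon\, t$. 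For $u$ radial, $\hbox{Mom}(u)=0$, while \eqref{Gag} forces $E(u)>0$ for any nonzero $u$ with $M(u)<M(Q)$; these two facts make the bound fail for large $t$, so the soliton scenario is impossible. No radial Sobolev embedding is needed, no sign issue in the nonlinearity arises, and radiality enters solely to kill the Momentum term. You should replace your Step 4 with this argument.
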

Finally in the case of general $L^2$-data, if we do not assume radial symmetry anymore, we are also able to prove that there is still a positive threshold under which long time existence,
uniqueness and scattering hold true.
\begin{theorem}\label{NonradialFocusingTheorem}
Let $n\ge 5$ and $\lambda=-1$. For any initial data $u_0\in L^2$ of Mass smaller than
\begin{equation}\label{MAST}
M_\ast=\left(\frac{1}{4}\right)^\frac{n}{8}M(Q)
\end{equation}
there exists a unique global solution of \eqref{4NLS}, $u\in C(\mathbb{R},L^2)\cap L^{\frac{2(n+4)}{n}}(\mathbb{R}\times\mathbb{R}^n)$ such that $u(0)=u_0$. Besides this solution scatters.
\end{theorem}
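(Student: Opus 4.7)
By Remark \ref{re:reduction-to-an-inequ}, it suffices to prove the a priori spacetime bound $\|u\|_{L^{2(n+4)/n}(\R\times\R^n)}\leq C(M(u_0))$ for every solution of \eqref{4NLS} with $\lambda=-1$ and $M(u_0)<M_\ast$. The plan is to follow the concentration-compactness and rigidity roadmap. Define $M_c$ as the supremum of masses for which this bound is known; small-data Strichartz theory gives $M_c>0$, and we argue by contradiction assuming $M_c<M_\ast$. Using the linear profile decomposition of Lemma \ref{le:linear-profile} together with a nonlinear profile superposition and the stability theory already developed for the defocusing proof of Theorem \ref{MainThm}, one extracts a \emph{critical element} $u_c$: a global solution with $M(u_c)=M_c$ and $\|u_c\|_{L^{2(n+4)/n}}=\infty$, whose orbit is almost periodic modulo the symmetries \eqref{DefTau}, i.e., there exist measurable $N(t)>0$ and $x(t)\in\R^n$ such that
\[
\{N(t)^{-n/2}u_c(t,N(t)^{-1}x+x(t)):t\in\R\}
\]
is precompact in $L^2(\R^n)$.

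A standard frequency-localization analysis reduces the matter to two scenarios: a \emph{self-similar} one in which $N(t)\to\infty$ in finite time, and a \emph{soliton-like} one in which $N(t)\sim 1$. The self-similar case is eliminated by an additional-regularity / double-Duhamel argument, exploiting the strong dispersive decay of $e^{it\Delta^2}$ available in dimensions $n\geq 5$: almost periodicity propagates into $H^s$ for some $s>0$, which is incompatible with finite-time blowup together with mass conservation.

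The main obstacle is the soliton-like scenario, and this is where the explicit threshold $M_\ast=(1/4)^{n/8}M(Q)$ enters. Because \eqref{4NLS} lacks Galilean invariance, the translation parameter $x(t)$ cannot be removed by a boost, so both \emph{standing waves} ($x(t)$ bounded) and \emph{traveling waves} ($|x(t)|\to\infty$) must be ruled out separately. The standing wave case is handled by a virial identity for $V(t)=\int|x|^4|u_c|^2\,dx$: combined with the sharp Gagliardo-Nirenberg coercivity, which only requires $M(u_c)<M(Q)$, it forces $V''<0$, contradicting $V\geq 0$. The traveling wave case is the hardest. The numerical value of $M_\ast$ appears calibrated so that $2M_\ast=2^{1-n/4}M(Q)<M(Q)$ exactly when $n\geq 5$, which suggests ruling out traveling waves by a \emph{doubling-separation} construction: two widely separated translates of $u_c$ form an approximate solution of mass close to $2M_c<M(Q)$ and of vanishing net Momentum (by symmetric placement), thereby reducing to the standing-wave regime just handled. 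The main technical difficulty is controlling the nonlinear interaction between the two well-separated copies in the borderline dimension $n=5$, where the dispersive decay is weakest and the mass budget is tightest; this is the step where the sharpness of the constant $(1/4)^{n/8}$ has to be fought for rather than given.
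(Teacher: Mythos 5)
Your framework is correct up through the self-similar and cascade exclusions (note that you omitted the double high-to-low cascade scenario from Theorem \ref{3ScenariosThm}, though it is handled by the same regularity gain plus conservation of Energy and the sharp Gagliardo--Nirenberg inequality), but the crucial traveling-wave step contains a genuine gap and takes a different route from the paper that does not succeed.

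The doubling-separation construction does not work. If $u_c$ is a soliton-like almost-periodic solution and $\tilde{u}_c$ is its space-reflection (or a far translate with opposite Momentum), the sum $u_c + \tilde{u}_c$ is not an almost-periodic solution modulo symmetries: the two bubbles drift apart along opposite rays, so the orbit of the sum is never precompact after quotienting by a \emph{single} scaling/translation, and the standing-wave argument you want to invoke only applies to such compact objects. Moreover, the stability lemma only tames the interaction error on finite intervals, and the error term accumulates over the infinite lifespan of a soliton-like solution, so one cannot conclude that the superposition is globally close to a true solution. Finally, even setting these issues aside, this strategy would yield the threshold $M(Q)/2 = 2^{-1}M(Q)$, which for $n\ge 5$ is \emph{strictly larger} than the stated $M_\ast = 2^{-n/4}M(Q)$; the paper's own bound being worse than $M(Q)/2$ for $n\geq 5$ should have signaled that the mechanism producing $(1/4)^{n/8}$ is not a doubling reduction.

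What the paper actually does to handle the traveling-wave soliton is quite different. First, the orthogonal Virial identity (Proposition \ref{SolitonImpliesNonZeroMomentum}) shows $y(t)\cdot\mathrm{Mom}(u)\simeq 8E(u)t$, hence $\mathrm{Mom}(u)\ne 0$, which already rules out standing waves and disposes of the radial case (Theorem \ref{FocusingRadialThm}). Next, a local moment-of-Mass (interaction-Virial) identity (Proposition \ref{AddedPropFoc}) produces the averaged constraint $2M(u)E(u)+\mathrm{Mom}(u)\cdot\mathrm{Im}\bigl(\frac{1}{|I|}\int_I\int\nabla\bar{u}\,\Delta u\bigr)=O(\varepsilon)$, relating Mass, Energy, Momentum and the Mass current. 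Finally, inspired by Banica, one applies the sharp Gagliardo--Nirenberg inequality to $e^{iXx_1}u$ and views $X\mapsto 2E(e^{iXx_1}u)$ as a quartic polynomial $P_u(X)$ with coefficients given by $M(u)$, $\mathrm{Mom}(u)$, $\|\nabla u\|_{L^2}^2$, the Mass current, and $E(u)$. Averaging $P_u$, evaluating at a well-chosen point, and using the constraint above together with H\"older's inequality to bound the gradient term leads to $(1-\kappa)/\kappa\le 3$ where $\kappa=(M(u)/M(Q))^{8/n}$, i.e., $M(u)\ge(1/4)^{n/8}M(Q)$. As the authors note, the (probably non-sharp) factor $(1/4)^{n/8}$ arises precisely because the $\|\nabla u\|_{L^2}$ term has no conservation law behind it; your calibration heuristic for $M_\ast$ is not what is happening.
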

Again, we are able to describe the loss of compactness for solutions of Mass below $M_\ast$ and $M(Q)$ in the radially symmetrical setting.

Although Theorem \ref{NonradialFocusingTheorem} does not prove global wellposedness and scattering all the way up to the Mass of the Ground State, we want to emphasize that it is not a result about small data.
The (probable) nonoptimality of the bound on the Mass comes from the fact that in our estimate the norm of the gradient appears, which we are not able to connect to some quantity related to the equation.

We now briefly sketch our arguments. By the reduction in Remark \ref{re:reduction-to-an-inequ}, we prove the {\it a priori} inequality \eqref{eq:reduction-to-an-inequ} by contradiction. To this end, we first reset the problem as a variational problem and wish to extract an extremal. This is achieved by using a concentration-compactness approach as in \cite{KerNew, KenMer}. The important ingredients in this step are the linear profile decomposition, which aim to compensate for the defect of compactness of the solution operator from $L^2$ to the Strichartz space,  and the stability lemma, which is concerned with constructing true solutions from approximate ones under suitable smallness conditions. After being renormalized by the natural symmetries associated to Equation \eqref{4NLS}, a minimal-mass blow-up solution is exhibited which can be further classified into one of three possible scenarios: self-similar (finite-time blow-up solutions), double high-to-low cascade, and soliton solutions. See Theorem \ref{3ScenariosThm} for a precise statement. Essentially we are able to prove that the minimal element enjoys more regularity, which enables us to conclude that it has a finite scattering norm, which leads to a contradiction to our assumption that \eqref{eq:reduction-to-an-inequ} fails.

Within this scheme of proof by contradiction, we will expand a little more on gaining regularity and disproving each scenario. In the self-similar case, this additional control comes from the stronger dispersion, which thus gives rise to a vanishing effect of the linear part of solutions. We remark that this exactly allows us to get rid of the assumption of radial symmetry. For the remaining two scenarios,  we use a ``Double-Duhamel'' formula introduced by Tao \cite{Tao} to gain regularity. It is the ``Double-Duhamel'' argument that imposes the restriction $n\ge 5$ for two reasons. First, we need the linear propagator to be integrable in time. Secondly, this argument allows us to gain $n/2$-derivatives which we want to be bigger than $2$.

Having enough regularity on the minimal-Mass blow-up solutions, the combination of conservation of Mass and Energy and the sharp Gagliardo-Nirenberg inequality allows us to exclude them when they change scale, i.e. in the self-similar and cascade cases. Thus we are left with dealing with a Soliton which is either a standing wave or a traveling wave. In the defocusing case, since we are in dimensions $n\ge 2$, we can exclude them simultaneously by a Virial-type identity in a direction orthogonal to that of the velocity, and this concludes the proof. In the focusing case, the Virial-type identity is weaker and it only excludes standing waves (i.e. waves with $0$ Momentum). This, however, is sufficient to treat the case of radially symmetrical data and gives Theorem \ref{FocusingRadialThm}. In the case of traveling waves, the Virial-type identity merely gives us a control on the velocity of the solution. Then we use an (interaction-Virial-type) estimate to control the dispersion of the Mass, which gives us a new relation between the Mass, the Momentum, the Energy and the current of Mass. At this point, we use an inequality inspired by Banica \cite{Ban} to understand the loss of optimality of the Gagliardo-Nirenberg inequality for solutions with nonzero Momentum. This gives us a relation between the Mass, the Momentum, the Energy, the current of Mass and the norm of the gradient. Finally we control the norm of the gradient by the Energy and the Massand get Theorem \ref{NonradialFocusingTheorem} with the bound \eqref{MAST}.

This paper is organized as follows: we fix our notations and review some preliminary results in Section \ref{Sec-Not}. In Section \ref{Sec-3Sce}, following an approach introduced by Kenig and Merle \cite{KenMer} and developed by Killip, Tao and Visan \cite{KilTaoVis}, we study the loss of compactness for the nonlinear solutions of \eqref{4NLS}, which subsequently reduces the proof of Theorems \ref{MainThm}, \ref{FocusingRadialThm}  and \ref{NonradialFocusingTheorem} to disproving the existence of some solutions whith special properties. We start the proof of Theorem \ref{MainThm} in Section \ref{Sec-Abs} where, using ideas from Killip, Tao and Visan \cite{KilTaoVis}, we derive abstract results on gain of regularity. Then in Section \ref{Sec-Proof} we exclude the three scenarios in the defocusing case. This finishes the proof of Theorem \ref{MainThm}. In Section \ref{SecFoc}, we apply our analysis to the focusing equation to prove Theorems \ref{FocusingRadialThm} and \ref{NonradialFocusingTheorem}.

\section{Notations and some preliminary results}\label{Sec-Not}

In this section, we introduce some notations. We write $X\lesssim Y$ whenever there exists some constant $C$, possibly depending on the dimension $n$ or on $\lambda$ so that $X\le CY$. Similarly we write $X\simeq Y$ when $X\lesssim Y\lesssim X$. A notation like $\lesssim_a,\simeq_a$ means that the constants in the inequalities may depend on $a$.

We define the Lebesgues spaces on space-time, $L^p(\mathbb{R},L^q)$ or $L^p(L^q)$, as the completion of the space of step functions (functions whose image takes a finite number of values) with respect to the norm
$$\Vert u\Vert_{L^p(\mathbb{R},L^q)}=\left(\int_\mathbb{R}\left(\int_{\mathbb{R}^n}\vert u(t,x)\vert^qdx\right)^\frac{p}{q}dt\right)^\frac{1}{p},$$
with the usual modification when $p$ or $q$ is infinite, and for $I\subset\mathbb{R}$ an interval, we let $L^p(I,L^q)$ be the set of restriction to $I$ of functions in $L^p(L^q)$. When $p=q$, we sometime write it $L^p(I\times\mathbb{R}^n)$ or, if $I=\mathbb{R}$, $L^p_{t,x}$. We are specially interested in the following space-time norms,
\begin{equation}\label{DefinitionOfNorms}
\begin{split}
\Vert u\Vert_{S^0(I)}&=\sup_{(p,q)}\Vert \vert\nabla\vert^\frac{2}{p}u\Vert_{L^p(I,L^q)},\\
\Vert u\Vert_{S^k(I)}&=\Vert \vert\nabla\vert^ku\Vert_{S^0(I)},\\
\Vert u\Vert_{Z(I)}&=\Vert u\Vert_{L^\frac{2(n+4)}{n}(I,L^\frac{2(n+4)}{n})},\\
\Vert u\Vert_{N(I)}&=\Vert \vert\nabla\vert^{-\frac{n}{n+4}}u\Vert_{L^\frac{2(n+4)}{n+8}(I,L^\frac{2(n+4)}{n+6})},
\end{split}
\end{equation}
where the supremum in the first norm is taken over all $S$-admissible values, $(p,q)$, that is all $2\le p,q\le\infty$ such that $(p,q)\ne (2,\infty)$ and $$\frac{2}{p}+\frac{n}{q}=\frac{n}{2}.$$
When $I=\mathbb{R}$, we may omit it in the notation of the norms. We also let $S^0(\mathbb{R})$ the completion of Schwartz functions \footnote{all the space-time derivatives are Schwartz in spatial space, and locally uniformly in time.} under the $S^0$-norm. For $I\subset \mathbb{R}$ an interval, we let $S^0(I)$ be the set of restrictions to $I$ of elements in $S^0(\mathbb{R})$, and $S^0_{loc}(I)$ the set of functions $f$ such that $f\in S^0(J)$ for all compact intervals $J\subset I$. We adopt similar conventions for $Z$ and $N$.

\begin{remark}
The $S^0$ and $Z$-norms are left invariant by the rescaling transformation $\tau$ defined for any $\lambda>0,\,x_0\in \R^d, \, t_0\in \R$ by \begin{equation}\label{DefTau}
\bigl[\tau_{(\lambda,t_0,x_0)}u\bigr](t,x):=\lambda^\frac{n}{2} u(\lambda^4 (t-t_0),\lambda (x-x_0)).
\end{equation}
This transforms a solution $u$ of \eqref{4NLS} with initial data $u(0)=u_0$ to another solution with data at time $t_0$ given by
\begin{equation}\label{DefG}
\tau_{(h,t_0,x_0)}u(t_0)=g_{(h,x_0)}u_0=h^\frac{n}{2}u_0(h(x-x_0))
\end{equation}
with same Mass.
\end{remark}

Before introducing nonlinear solutions, we recall some facts about the linear propagator
\begin{equation*}
e^{it\Delta^2}=\mathcal{F}^{-1}e^{it\vert\xi\vert^4}\mathcal{F},
\end{equation*}
where $\mathcal{F}$ stands for the Fourier transform given by
$$\mathcal{F}u(\xi)=\hat{u}(\xi)=\frac{1}{(2\pi)^\frac{n}{2}}\int_{\mathbb{R}^n} e^{-i\langle x,\xi\rangle}u(x)dx.$$
The linear propagator satisfies the following decay estimate which follows from application of the stationary phase method, cf Stein \cite{Stein:large},
\begin{equation}\label{DecayEstimate}
\Vert P_{N}e^{it\Delta^2}\delta\Vert_{L^\infty}\lesssim N^{-n}t^{-\frac{n}{2}},
\end{equation} where $P_N$ denotes the littlewood-Paley operator defined in \eqref{DefLitPalOp} below.
Using also the trivial estimate $\Vert e^{it\Delta^2}P_N\delta\Vert_{L^\infty}\lesssim N^n$, and summing over all frequencies, we deduce the coarser decay estimate
\begin{equation}\label{DecayEstimate2}
\Vert e^{it\Delta^2}\delta\Vert_{L^\infty}\lesssim t^{-\frac{n}{4}}.
\end{equation}
A deeper consequence is the following Strichartz estimates from Pausader \cite{Pau1} using an abstract result of Keel and Tao \cite{KeeTao} (see also Kenig, Ponce and Vega \cite{KenPonVeg} for previous results).
\begin{equation}\label{StricEst}
\Vert u\Vert_{S^0(I)}\lesssim \Vert u_0\Vert_{L^2}+\Vert h\Vert_{N(I)}
\end{equation}
whenever $u\in S^0(I)$ is a solution of the linear equation
$$i\partial_tu+\Delta^2u=h$$
such that $u(t_0)=u_0\in L^2$ for some $t_0\in I$ and $h\in N(I)$.

\begin{definition}[Strong solutions]
Let $I\subset\mathbb{R}$ be an interval. A strong solution of \eqref{4NLS} on $I$ is a function $u\in S_{loc}^0(I)$ satisfying the Duhamel formula: for all $t, t_0\in I$,
\begin{equation}\label{DuhamelFormula}
u(t)=e^{it\Delta^2}u(t_0)+i\lambda\int_{t_0}^te^{i(t-s)\Delta^2}\left(\vert u\vert^\frac{8}{n}u(s)\right)ds.
\end{equation}
\end{definition}
Note that, by Strichartz estimates \eqref{StricEst}, each term make sense as continuous function in $L^2$. Strong solutions have a conserved Mass $M(u)$ as defined in \eqref{DefOfMass}, and when they are smoother they enjoy other conserved quantities. Here we use the conservation of Momentum for $H^1$ solutions $u\in C(I,H^1)$, where the Momentum vector is defined as
\begin{equation}\label{DefOfMomentum}
\hbox{Mom}(u)=\hbox{Im}\int_{\mathbb{R}^n}u(t,x)\nabla \bar{u}(t,x)dx
\end{equation}
and conservation of Energy for $H^2$ functions $u\in C(I,H^2)$, where the Energy is given by
\begin{equation}\label{DefOfEnergy}
E(u)=\frac{1}{2}\int_{\mathbb{R}^n}\vert\Delta u(t,x)\vert^2dx+\frac{n\lambda}{2(n+4)}\int_{\mathbb{R}^n}\vert u(t,x)\vert^\frac{2(n+4)}{n}dx.
\end{equation}
Note that in these notations, we omit $t$ since these are conserved quantities.

We also need the sharp Gagliardo-Nirenberg inequality from Fibich, Ilan and Papanicolaou \cite{FibIlaPap}
\begin{equation}\label{Gag}
\Vert f\Vert_{L^\frac{2(n+4)}{n}}^\frac{2(n+4)}{n}\le \frac{n+4}{n}\left(\frac{M(f)}{M(Q)}\right)^\frac{8}{n}\Vert \Delta f\Vert_{L^2}^2
\end{equation}
for all functions $f\in H^2$, where $Q$ is a Ground State.

Besides a consequence of Strichartz estimates \eqref{StricEst} is the following local well-posedness statement.
\begin{proposition}\label{LocExProp}
Let $n\ge 1$, $\lambda=\pm 1$. Then, for any initial data $u_0\in L^2$, there exists an interval $I\subset \mathbb{R}$ containing a neighborhood of $0$ and a unique function $u\in S^0(I)$ solution of \eqref{4NLS} such that $u(0)=u_0$. This solution has conserved Mass.
Besides if $u_0\in H^2$, $u\in S^2(I)$, $u$ has conserved Energy and Momentum and $\vert I\vert\gtrsim_{M(u)} \Vert\Delta u_0\Vert_{L^2}^{-2}$. In particular, in the defocusing case $u$ can be extended to a solution on $\mathbb{R}$ in the sense that $u\in S^0(J)$ for all compact intervals $J\subset\mathbb{R}$.
Finally, if $u\in S^0(\mathbb{R})$, then $u$ scatters in the sense that \eqref{ScatStatement} holds true as $t\to \pm\infty$.
\end{proposition}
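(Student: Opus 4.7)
The plan is to run a standard contraction mapping argument on the Duhamel formula \eqref{DuhamelFormula} in the complete metric space
\[
X_\eta(I) = \{ u \in S^0(I) : \|u\|_{Z(I)} \le \eta \}
\]
equipped with the $S^0(I)$ metric, using the Strichartz bound \eqref{StricEst} as the main linear tool. The analytic heart is a nonlinear estimate of the form
\[
\bigl\| |u|^{\frac{8}{n}} u - |v|^{\frac{8}{n}} v \bigr\|_{N(I)} \lesssim \bigl(\|u\|_{Z(I)}^{\frac{8}{n}} + \|v\|_{Z(I)}^{\frac{8}{n}}\bigr) \|u-v\|_{S^0(I)},
\]
which follows from H\"older in space-time after unpacking the exponents defining $Z$ and $N$ and invoking Sobolev embedding to convert the $-n/(n+4)$ derivative in $N$ into an appropriate $L^q_x$ factor. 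Combined with \eqref{StricEst}, the problem of finding a fixed point of the Duhamel map reduces to ensuring $\|e^{it\Delta^2}u_0\|_{Z(I)} < \eta_0$ for some small absolute $\eta_0$. For $u_0 \in L^2$, the global Strichartz bound puts $e^{it\Delta^2}u_0 \in Z(\R)$ and absolute continuity of integration gives a neighborhood $I$ of $0$ on which this norm is arbitrarily small. Uniqueness in $S^0(I)$ follows by subtracting two candidate solutions and applying the same estimate on finitely many subintervals on which the $Z$-norm is small.

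To propagate $H^2$ regularity I would rerun the contraction in $S^2(I)$, using a fractional chain rule version of the same nonlinear estimate. For the quantitative bound $|I| \gtrsim_{M(u)} \|\Delta u_0\|_{L^2}^{-2}$, I would control the free solution by H\"older in time followed by Gagliardo-Nirenberg:
\[
\|e^{it\Delta^2} u_0\|_{Z(I)} \le |I|^{\frac{n}{2(n+4)}} \|e^{it\Delta^2} u_0\|_{L^\infty_t L^{2(n+4)/n}_x} \lesssim |I|^{\frac{n}{2(n+4)}} \|u_0\|_{L^2}^{\frac{4}{n+4}} \|\Delta u_0\|_{L^2}^{\frac{n}{n+4}},
\]
using conservation of $L^2$ and $\dot H^2$ by the linear flow. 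Demanding the right-hand side be at most $\eta_0$ and solving for $|I|$ yields exactly the claimed bound.

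Conservation of Mass comes from the contraction construction: for Schwartz data one verifies $\partial_t M(u) = 0$ by direct computation, and the general $L^2$ case follows by approximation together with the continuous dependence on initial data provided by the same nonlinear estimate. Conservation of Momentum and Energy for $H^2$ solutions is analogous, as all manipulations are justified at the $S^2$ level. In the defocusing case $\lambda = 1$, the Energy formula \eqref{DefOfEnergy} immediately yields $\|\Delta u(t)\|_{L^2}^2 \le 2 E(u_0)$, so the $H^2$ lifetime estimate can be iterated with a uniform step of size $\gtrsim_{M(u_0)} E(u_0)^{-1}$, producing a solution in $S^0(J)$ for every compact $J \subset \R$.

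Finally, if $u \in S^0(\R)$ then $\|u\|_{Z(\R)} < \infty$ and the nonlinear estimate places $|u|^{8/n}u$ in $N(\R)$. Setting
\[
\omega^{\pm} = u_0 + i\lambda \int_0^{\pm \infty} e^{-is \Delta^2} \bigl( |u|^{\frac{8}{n}} u \bigr)(s)\, ds,
\]
the scattering statement \eqref{ScatStatement} reduces to showing that the Duhamel tail over $(t, \pm\infty)$ vanishes in $L^2$, which is exactly one more application of Strichartz on a shrinking interval where the $Z$-norm of $u$ is small. The single most delicate ingredient is the nonlinear estimate, as it is sensitive to how the $-n/(n+4)$ derivative in $N$ is distributed among the factors and requires a fractional chain rule in the $H^2$-persistence version; once it is established, every remaining item is a routine consequence of Strichartz theory.
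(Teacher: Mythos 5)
Your proposal is correct and follows the standard contraction-mapping route that the paper implicitly invokes (the paper states Proposition \ref{LocExProp} without proof, as a consequence of the Strichartz estimate \eqref{StricEst}). One reassurance worth recording about the step you flagged as delicate: even when $n>8$, where $F(z)=|z|^{8/n}z$ fails to be $C^2$, the $S^2$ persistence does not require taking two full derivatives of $F(u)$, since the $N$-norm already carries the factor $|\nabla|^{-n/(n+4)}$, so one only needs $2-\tfrac{n}{n+4}=\tfrac{n+8}{n+4}$ derivatives of $F(u)$, and $\tfrac{n+8}{n+4}<1+\tfrac{8}{n}$ for every $n$, which is within the reach of the fractional chain rule for $C^{1,8/n}$ nonlinearities.
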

One also easily sees that in case $u\in S^0(\mathbb{R})$ and $u_0\in H^2$, then $u$ scatters to linear solutions $e^{it\Delta^2}\omega^\pm$ such that $M(\omega^\pm)=M(u)$ and $E(u)=\Vert \Delta\omega^\pm\Vert_{L^2}^2$ for all $t\in\mathbb{R}$. In particular, either $u$ has positive Energy or $u=0$.

In our nonlinear analysis, we need some tools from Littlewood-Paley theory. Let $\psi\in C^\infty_c(\mathbb{R}^n)$ be supported in the ball $B(0,2)$, and such that $\psi=1$ in $B(0,1)$. For any dyadic number $N=2^k,k\in\mathbb{Z}$, we define the following
Littlewood-Paley operators:
\begin{equation}\label{DefLitPalOp}
\begin{split}
&\widehat{P_{\leq N}f}(\xi)=\psi(\xi/N)\hat{f}(\xi),\\
&\widehat{P_{>N}f}(\xi)=(1-\psi(\xi/N))\hat{f}(\xi),\\
&\widehat{P_Nf}(\xi)=\left(\psi(\xi/N)-\psi(2\xi/N)\right)\hat{f}(\xi).
\end{split}
\end{equation}
Similarly we define $P_{<N}$ and $P_{\ge N}$ by the equations
$$P_{<N} = P_{\leq N}-P_N\hskip.2cm\hbox{and}\hskip.2cm P_{\ge N} = P_{> N} + P_N.$$
These operators commute one with another. They also commute with derivative operators and with the semigroup
$e^{it\Delta^2}$. In addition they are self-adjoint and bounded on $L^p$ for all $1\le p\le\infty$.
Moreover, they enjoy the following Bernstein inequalities:
\begin{equation}\label{BernSobProp}
\begin{split}
&\hskip.8cm \Vert P_{\ge N}f\Vert_{L^p}\lesssim_s
N^{-s}\Vert\vert\nabla\vert^sP_{\geq N}f\Vert_{L^p}\lesssim_s N^{-s}\Vert\vert\nabla\vert^sf\Vert_{L^p}\\
&\hskip.8cm\Vert\vert\nabla\vert^sP_{\le N}f\Vert_{L^p}\lesssim_s N^s\Vert P_{\le
N} f\Vert_{L^p}
\lesssim_s N^s\Vert f\Vert_{L^p}\\
&\hskip.8cm\Vert \vert\nabla\vert^{\pm s}P_Nf\Vert_{L^p}\lesssim_s
N^{\pm s}\Vert P_Nf\Vert_{L^p}\lesssim_s N^{\pm s}\Vert f\Vert_{L^p}\\
&\hskip.8cm \Vert P_Nf\Vert_{L^q}\lesssim N^{\frac{n}{p}-\frac{n}{q}}\Vert P_Nf\Vert_{L^p}
\end{split}
\end{equation}
for all $s\ge 0$, and all $1\le p\le q\le\infty$, independently of $f$, $N$, and $p$, where
$\vert\nabla\vert^s$ is the classical fractional differentiation operator. We refer to Tao \cite{TaoBook} for more details. Finally, we let $\alpha=2(n+5)/(n+4)$ and $F(u)=\lambda\vert u\vert^\frac{8}{n}u$.


\section{Existence of minimal-mass blow-up solutions, and three scenarios}\label{Sec-3Sce}

In this section, we establish a linear profile decomposition for solutions to the linear equation of \eqref{4NLS}, namely
$$iu_t+\Delta^2 u=0.$$
It roughly asserts that, a sequence of linear solutions with bounded initial data in $L^2$, after passing to a subsequence if necessary, can be rewritten as a sum of a superposition of profiles and an error term. The profiles are ``orthogonal"  and the error term is small in the Strichartz norm, see Lemma \ref{le:linear-profile} and the following remark. The purpose of linear profile is to compensate for the defect of compactness of the solution operator $e^{it\Delta^2}, L^2\to Z(\mathbb{R})$. With it, we are able to extract a minimal-Mass blow-up solution to \eqref{4NLS} if it blows-up in the $Z$-norm sense. Furthermore, as an extremal case, this minimal element enjoys good ``compactness" properties. More precisely, our main result in this section is the following.

Let $M_{max}$ (resp. $M_{max}^{rad}$) be the first Mass-level for which there exists solutions (resp. radially symmetrical solutions) of arbitrarily large $Z$-norm. See the end of Subsection \ref{SubSec-StabLemma} for a more precise definition. Then we have the following.
\begin{theorem}\label{3ScenariosThm}
Suppose that $M_{max}<+\infty$. Then there exists $u\in S^0_{loc}(I)$ a maximal-lifespan strong solution of Mass equal to $M_{max}$, such that $$\Vert u\Vert_{Z(I)}= +\infty.$$
Furthermore, we have the following compactness property: there exist two smooth functions $N: I \to \mathbb{R}_+^\ast$ and $y: I \to \mathbb{R}^n$ such that
\begin{equation}\label{CompactnessImage}
K=\{v(t)=g(t)^{-1}u(t)=g_{(N(t),y(t))}^{-1}u(t):t\in I\}\hskip.5cm\hbox{is precompact in }L^2
\end{equation}
and one of the following three scenarios holds true:
\begin{itemize}
\item[I.] (Self-similar solution) There holds $I=(0,+\infty)$ and $N(t)=t^{-\frac{1}{4}}$ for all $t$.
\item[II.] (Double high-to-low cascade) There holds $I=\mathbb{R}$, $\liminf_{t\to\pm \infty}N(t)=0$,
and $N(t)\le 1$ for all $t$.
\item[III.] (Soliton-like solution) There holds $I=\mathbb{R}$ and $N(t)=1$ for all $t$.
\end{itemize}
Furthermore if $n\ge 2$ and $M^{rad}_{max}<+\infty$, then the same conclusion holds true with the additional information that $u$ is radially symmetrical.\footnote{In that case, one could also assume that $y(t)=0$, although we do not use it here.}
\end{theorem}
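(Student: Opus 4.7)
The plan is to follow the Kenig--Merle concentration-compactness scheme as refined by Killip--Tao--Visan in the $L^2$-critical setting. First I would produce a minimal blow-up solution. Take a (radial, in the radial case) minimizing sequence $u_n$ of strong solutions with $M(u_n)\to M_{max}$ and $\Vert u_n\Vert_{Z}\to+\infty$. Apply the linear profile decomposition of Lemma \ref{le:linear-profile} to the $L^2$-bounded sequence $u_n(0)$, writing it as a sum $\sum_{j\le J} g_n^j e^{it_n^j\Delta^2}\phi^j + w_n^J$ of pairwise asymptotically orthogonal profiles with $w_n^J$ small in the Strichartz sense as $J,n\to\infty$. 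Attach to each profile $\phi^j$ its nonlinear profile $v^j$ (using the wave operator when $t_n^j\to\pm\infty$) and form an approximate solution. If more than one profile were nontrivial, asymptotic orthogonality of the Masses would force $M(\phi^j)<M_{max}$ for every $j$, whence by definition of $M_{max}$ each nonlinear profile has uniformly bounded $Z$-norm. The stability lemma of Subsection \ref{SubSec-StabLemma} would then lift this to a uniform $Z$-bound on $u_n$, contradicting blow-up; likewise the error $w_n^J$ must vanish. Hence after renormalization by the symmetry \eqref{DefTau}, $u_n(0)\to\phi$ in $L^2$, and the associated nonlinear solution $u$ has $M(u)=M_{max}$ and $\Vert u\Vert_{Z(I)}=+\infty$.

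Next I would establish the compactness \eqref{CompactnessImage}. Given any sequence of times $t_n\in I$, apply the same profile decomposition to the $L^2$-bounded sequence $u(t_n)$. Minimality together with the stability lemma again forces exactly one profile and a vanishing error, so $u(t_n)$ is precompact in $L^2$ modulo parameters $g_{(N_n,y_n)}$. This yields measurable functions $N(\cdot)$ and $y(\cdot)$; standard mollification on a slightly smaller scale (replacing $N$ by $\widetilde N\simeq N$) provides the smooth versions without enlarging the compact orbit $K$ appreciably. In the radial setting, the radial refinement of the profile decomposition allows us to take $y_n^j\equiv 0$, so the whole construction can be performed with $y(t)\equiv 0$ and $u$ radially symmetric.

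Finally, I would reduce to the three scenarios by the Killip--Tao--Visan trichotomy. Under rescaling \eqref{DefTau} one has $N(t)\mapsto \lambda N(\lambda^4 t)$, which gives enough freedom to normalize. There are two cases. If $\sup I<+\infty$ or $\inf I>-\infty$, the local well-posedness statement in Proposition \ref{LocExProp}, combined with the compactness of $K$, forces $N(t)\to+\infty$ as $t$ approaches the finite endpoint at a rate comparable to $(t-T_\pm)^{-1/4}$ (both directions, by the Duhamel formula and Strichartz); after translating the endpoint to $0$ and rescaling, we land in Scenario~I with $N(t)=t^{-1/4}$. If $I=\R$, we distinguish according to whether $N(t)$ is bounded below on $\R$: if so, rescale so that $N(t)\equiv 1$, giving Scenario~III; otherwise $N(t_n)\to 0$ along a sequence, and by compactness of $K$ and a standard argument (using that $N$ cannot oscillate too wildly) one obtains sequences tending to both $\pm\infty$ along which $N\to 0$, and rescaling so that $\sup_\R N\le 1$ yields Scenario~II.

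The main obstacle will be the extraction step: it requires propagating the asymptotic orthogonality of the linear profiles to the nonlinear level through the stability lemma, and handling profiles whose time parameter $t_n^j$ tends to $\pm\infty$ via the wave operator adapted to the fourth-order flow. Secondary subtleties are the selection of smooth $N(\cdot), y(\cdot)$ consistent with the compactness, and the sharp rate $N(t)\simeq (t-T_-)^{-1/4}$ in the self-similar reduction, which uses the quantitative life-span bound $\vert I\vert\gtrsim_{M(u)}\Vert\Delta u_0\Vert_{L^2}^{-2}$ applied after a rescaling that fixes $N(t_0)$ to unit size.
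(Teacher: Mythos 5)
Your overall scheme — linear profile decomposition, stability lemma, minimal-mass blow-up extraction via a Palais--Smale argument, then compactness modulo symmetries — coincides with the paper's approach. The gap is in the final reduction to the three scenarios. Your trichotomy splits according to whether $I=\R$ and whether $N(\cdot)$ is bounded below on $\R$, but this case split does not map cleanly onto the three scenarios. If $I=\R$ and $N$ is bounded below yet \emph{unbounded above}, you assign this to Scenario~III, but a fixed rescaling cannot force $N(t)\equiv 1$: a global-in-time constant normalization requires a time-\emph{dependent} renormalization, which leaves the class of solutions. The paper's resolution is fundamentally different: it introduces the oscillation functional $Osc(\kappa)$ in \eqref{DefOfOscillFunc} and the quantity $a(t_0)$ in \eqref{DefOfAFunction}, and crucially, in each branch it \emph{passes to a new solution} $W$ obtained as a strong limit of normalizations $g(t_k)^{-1}u(t_k+N(t_k)^{-4}\cdot)$; it is $W$, not the original $u$, that lands in one of the three scenarios with the exact normalization $N_W\equiv 1$ or $N_W\le 1$. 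Lemma~\ref{lem6} is needed to relate the rescaling function of $W$ to that of $u$ and conclude, e.g., $N_W\simeq 1$ in the $Osc$-bounded case. Your proposal never mentions this replacement, and the case ``$N$ bounded below, unbounded above'' in fact falls into the paper's Subcase~1 of Case~II (cascade), not your Scenario~III.

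Separately, your derivation of the sharp rate $N(t)\simeq (T^*-t)^{-1/4}$ for the self-similar reduction is underjustified. Lemma~\ref{lem5} only gives the one-sided bound $N(t)\ge\delta^{1/4}(T^*-t)^{-1/4}$ via local constancy and compactness. The matching upper bound $T^*-t\simeq N(t)^{-4}$ requires the recursive construction of times $t_k$ with $N(t_{k+1})\ge 2N(t_k)$ and $t_{k+1}-t_k\lesssim N(t_k)^{-4}$ carried out in Subcase~2 of the paper's argument, using the definition of $B$ via $Osc(B)>2\varepsilon^{-1}$. Invoking ``the quantitative lifespan bound $|I|\gtrsim_{M(u)}\|\Delta u_0\|_{L^2}^{-2}$'' does not work here because at this stage of the argument $u(0)$ is merely in $L^2$, not $H^2$, so that bound is unavailable. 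To close the gap, you should adopt the $Osc$/$a$ bookkeeping (or an explicit equivalent) and make clear that the final solution in each scenario may be a strong limit of normalizations of the original minimal-mass solution rather than the original itself.
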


We call the function $g(t)=g_{(N(t),y(t))}$ appearing in \eqref{CompactnessImage} and defined in \eqref{DefG}, the rescaling function of $u$.

\begin{remark}
By precompactness of $K$ (see also Corollary \ref{SmoothNy} below), we may also assume that $N(t)$ and $y(t)$ satisfy the following relations
\begin{equation}\label{VariationOfH}
\begin{split}
&\vert N^{-5}\dot{N}\vert\lesssim_u 1,\hskip.1cm\hbox{ and }\hskip.1cm\vert N^{-3}\dot{y}\vert\lesssim_u1.
\end{split}
\end{equation}
Note that the first and last scenarios correspond to saturating the first inequality.
\end{remark}

The remaining part of this section is devoted to proving Theorem \ref{3ScenariosThm} which is the analog for \eqref{4NLS} of Theorem 1.16 in Killip, Tao and Visan \cite{KilTaoVis}.


\subsection{A linear profile decomposition.}
As emphasized in the introduction, the profile decomposition is an important ingredient to extract the blow-up bubbles for certain critical equations such as the Schr\"odinger, wave and (generalised) Korteweg-de Vries equations. Bahouri and G\'erard \cite{BahGer} established a decomposition for the  energy-critical wave equation in $\R^3$. Keraani \cite{Ker} treated the case for the Energy-critical Schr\"odinger equation. For the Mass-critical Schr\"odinger equation, Merle and Vega \cite{MerVeg} first established the linear profile decomposition in spirit similar to Bourgain \cite{Bou}. We also refer to B\'egout and Vargas \cite{BegVar}, Carles and Keraani \cite{CarKer}, G\'erard \cite{Ger}, Keraani \cite{KerNew}, and Shao \cite{Shao1,Shao2} for related works based on profile decomposition.

In the following, we define a scale-core to be a sequence $(h_k,t_k,x_k)\in (0,\infty)\times \mathbb{R}\times\R^n$. We call two scale-cores $(h_k^j,t_k^j,x_k^j)_k$ and $(h_k^p,t_k^p,x_k^p)_k$ orthogonal if
\begin{equation}\label{eq:ortho-parameter}
\lim_{k\to +\infty} \left(\frac {h_k^j}{h_k^p}+\frac{h_k^p}{h_k^j} +(h_k^j)^4|t_k^j-t_k^p|+h_k^j|x_k^j-x_k^p|\right)=+\infty.
\end{equation}

The main result of this subsection is the following linear profile decomposition.
\begin{lemma}[Linear profile decomposition]\label{le:linear-profile}
Let $(u_k)_{k\ge 1}$ be a sequence of functions satisfying $\|u_k\|_{L^2}\le 1$.  Then up to a subsequence, there exists a sequence of $L^2$ functions $(\phi^j)_{j\ge 1}$ and a family of pairwise orthogonal scale-cores $(h_k^j,t_k^j,x_k^j)\in (0,\infty)\times \mathbb{R}\times\R^n$ such that, for any $l\ge 1$, there exists a function $w_k^l\in L^2$ satisfying
\begin{equation}\label{eq:prof}
e^{it\Delta^2}u_k=\sum_{1\le j\le l} \tau_{(h_k^j,t_k^j,x_k^j)}e^{i(\cdot)\Delta^2}\phi^j+e^{it\Delta^2}w_k^l,
\end{equation}
where $\tau$ is defined in \eqref{DefTau} and
\begin{align}
&\label{eq:err} \lim_{l\to +\infty}\limsup_{k\to +\infty} \|e^{it\Delta^2}w_k^l\|_{Z}=0.
\end{align}
\end{lemma}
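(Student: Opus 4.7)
The plan is to follow the now-classical iterative extraction scheme of Bahouri--Gérard and Keraani, adapted to the Mass-critical Schrödinger setting by Merle--Vega, Keraani and Bégout--Vargas. The core tool is an \emph{inverse Strichartz} inequality for the fourth-order propagator: if $\{f_k\}\subset L^2$ satisfies $\|f_k\|_{L^2}\le A$ and $\limsup_k \|e^{it\Delta^2}f_k\|_Z \ge \eps > 0$, then after passing to a subsequence there exist a scale-core $(h_k,t_k,x_k)$ and a nonzero profile $\phi\in L^2$ with
$$\|\phi\|_{L^2}\;\gtrsim\;A\,(\eps/A)^{\beta}$$
for some $\beta=\beta(n)>0$, obtained as a weak $L^2$-limit along the renormalization of $f_k$ dictated by the scale-core. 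All the information in \eqref{eq:prof}--\eqref{eq:err} will be encoded in this single statement together with the iteration below.

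The inverse Strichartz is itself a consequence of a \emph{refined} Strichartz estimate in which the $L^2$-norm of the data on the right-hand side of $\|e^{it\Delta^2}f\|_Z \lesssim \|f\|_{L^2}$ is replaced by a weaker quantity measuring concentration on frequency cubes---schematically,
$$\|e^{it\Delta^2}f\|_Z\;\lesssim\;\|f\|_{L^2}^{1-\theta}\,\Bigl(\sup_{Q}|Q|^{-1/2}\|\chi_Q\,\hat f\|_{L^2}\Bigr)^{\theta},$$
with supremum over dyadic cubes $Q$ in frequency space and $\theta=\theta(n)>0$. Such a refinement is obtained by Whitney-decomposing the phase $|\xi|^4-|\eta|^4$ and combining with a bilinear $L^2$ estimate for $e^{it\Delta^2}$ on frequency-separated data; for the fourth-order propagator the required bilinear estimate has been carried out in \cite{JiangPauSha}, which I would invoke as a black box. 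Given this refinement, a pigeonholing argument produces a frequency cube $Q_k$ on which $\hat f_k$ concentrates; the centre and side-length of $Q_k$ dictate $x_k$ and $h_k$, a further pigeonholing in spacetime dictates $t_k$, and the profile $\phi$ is extracted as a weak $L^2$-limit by Banach--Alaoglu.

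With the inverse Strichartz in hand I carry out the iteration. Set $w_k^0:=u_k$; at stage $l$, if $\eps_l:=\limsup_k \|e^{it\Delta^2}w_k^l\|_Z>0$ I extract a profile $\phi^{l+1}$ and a scale-core, and define $w_k^{l+1}$ to be $w_k^l$ minus the stage-$(l+1)$ summand of \eqref{eq:prof} evaluated at $t=0$. The weak convergence used to extract $\phi^{l+1}$, combined with the Hilbert parallelogram identity and the $L^2$-isometry of the symmetries $\tau$ and $e^{it\Delta^2}$, yields the asymptotic mass decoupling
$$\limsup_k\|w_k^l\|_{L^2}^2 \;=\; \|\phi^{l+1}\|_{L^2}^2 + \limsup_k\|w_k^{l+1}\|_{L^2}^2 + o(1),$$
so that $\sum_j\|\phi^j\|_{L^2}^2\le 1$; combined with the lower bound $\|\phi^{l+1}\|_{L^2}\gtrsim (\eps_l/\sqrt{M_l})^{\beta}$ this forces $\eps_l\to 0$ as $l\to\infty$, which is \eqref{eq:err}. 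Pairwise orthogonality \eqref{eq:ortho-parameter} of the scale-cores is verified step-by-step by contradiction: if at some stage two cores $(h_k^j,\ldots)$ and $(h_k^p,\ldots)$ with $j<p$ failed \eqref{eq:ortho-parameter} along a subsequence, then $\tau^{-1}_{(h_k^p,t_k^p,x_k^p)}\tau_{(h_k^j,t_k^j,x_k^j)}$ would stabilize to a fixed $L^2$-isometry, and testing against $\phi^j$ would show that the weak limit defining $\phi^p$ already contained a nonzero contribution from $\phi^j$, contradicting the fact that $\phi^j$ has been subtracted off by stage $p$. The main technical obstacle in the whole scheme is the refined bilinear Strichartz for $e^{it\Delta^2}$; since this has been established in \cite{JiangPauSha}, the remainder of the proof is essentially bookkeeping.
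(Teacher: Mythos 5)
Your high-level scheme (refined Strichartz $\Rightarrow$ inverse Strichartz $\Rightarrow$ iterative profile extraction with almost orthogonality) is the right one and broadly agrees with the paper's. But there is a genuine gap in your choice of refined Strichartz estimate, and it is not a cosmetic one.

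You propose a refinement of the form
$\|e^{it\Delta^2}f\|_Z\lesssim\|f\|_{L^2}^{1-\theta}\bigl(\sup_{Q}|Q|^{-1/2}\|\chi_Q\hat f\|_{L^2}\bigr)^{\theta}$
with the supremum over \emph{dyadic frequency cubes}, to be proved by a Whitney decomposition of $|\xi|^4-|\eta|^4$ and a bilinear estimate. That quantity measures concentration at an arbitrary frequency \emph{center} $\xi_Q$ and at a scale given by the side-length, so the extraction it drives produces a four-parameter family $(h_k,\xi_k,t_k,x_k)$ of scale-cores, including a frequency modulation $e^{i\xi_k\cdot x}$. This is exactly the Galilean-invariant machinery for the second-order mass-critical NLS, and it is exactly what the paper is going out of its way to avoid. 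Since $e^{i\xi_0\cdot x}$ is \emph{not} a symmetry of \eqref{4NLS}, you cannot strip off the frequency parameter afterwards, and the profile decomposition you would obtain is not the three-parameter one stated in Lemma~\ref{le:linear-profile}. You would then need a separate post-processing step showing that any unbounded frequency parameter $|\xi_k^j|\to\infty$ forces the corresponding bubble to vanish in the $Z$-norm (so it can be dumped in $w_k^l$). That step is true for the fourth-order propagator, precisely because frequency $N$ data gains $N^{-n}$ in the dispersive estimate \eqref{DecayEstimate}, but you never address it, and the cube refinement you cite does not detect it: for $u_k=e^{i\xi_k\cdot x}\phi$ with $\hat\phi$ compactly supported and $|\xi_k|\to\infty$, one has $\sup_Q|Q|^{-1/2}\|\chi_Q\hat u_k\|_{L^2}\simeq\|\phi\|_{L^2}\not\to 0$, whereas $\|e^{it\Delta^2}u_k\|_Z\to 0$. (There is also a small slip: the center of the frequency cube $Q_k$ dictates a frequency modulation, not the spatial center $x_k$; the latter comes from a separate physical-space pigeonholing.)

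The paper sidesteps all of this by proving a refined Strichartz inequality over dyadic \emph{annuli}, not cubes: Lemma~\ref{le:refinement-strichartz} gives
$\|e^{it\Delta^2}u_0\|_{Z}\lesssim\|u_0\|_{L^2}^{1-\theta}\bigl(\sup_N N^{-n/2}\|P_N e^{it\Delta^2}u_0\|_{L^\infty(\R\times\R^n)}\bigr)^{\theta}$.
This refinement is adapted to the lack of Galilean invariance: the only concentration parameter it sees is the dyadic scale $N$, which becomes $h_k$, and the $N^{-n/2}$ weight exactly encodes the stronger high-frequency decay of $e^{it\Delta^2}$ so that high-frequency-modulated bubbles are automatically killed. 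Moreover, the proof of the annulus refinement is entirely elementary---Littlewood--Paley square function, Bernstein, H\"older, and Schur's test---and requires no bilinear restriction estimate at all, which is a further simplification over the route you propose. So the ``technical obstacle'' you identify (the bilinear estimate) is not actually needed here. Your iteration and almost-orthogonality bookkeeping are fine, but you should replace your cube-based refinement with the annulus-based one (or else add the missing argument that frequency-modulated bubbles vanish in $Z$).
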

\begin{remark}
As a consequence of the orthogonality condition \eqref{eq:ortho-parameter}, we have two useful properties for the decomposition, for all $l\ge 1$ and $1\le j\ne p\le l$, there holds that
\begin{align}
&\label{eq:L2-almost-ortho}
\lim_{k\to +\infty} \left(\|u_k\|^2_{L^2}-\bigl(\sum_{j=1}^l\|\phi^j\|^2_{L^2}+\|w_k^l\|^2_{L^2}\bigr)\right)=0\hskip.1cm \hbox{and}\\
 &\label{eq:strichartz-ortho} \lim_{k\to+\infty}\bigl\|\bigl(\tau_{(h_k^j,t_k^j,x_k^j)}e^{it\Delta^2}\phi^j\bigr)\bigl( \tau_{(h_k^p,t_k^p,x_k^p)}e^{it\Delta^2}\phi^p\bigr)\bigr\|_{L^{\frac{n+4}{n}}(\mathbb{R}\times\mathbb{R}^n)}=0.
\end{align}
These are a manifestation that the profiles are ``orthogonal'': they are either separated in space or in time, or they have very different scales. For a similar proof, see Merle and Vega \cite{MerVeg} and Shao \cite{Shao2}.
\end{remark}

We present a short proof of this lemma based on the approach in Killip and Visan \cite{KilVis2}, see also Shao \cite{Shao1}; we also refer readers to Pausader \cite{PauThes} for a slightly different approach based on Bahouri and G\'erard \cite{BahGer} and G\'erard, Meyer and Oru \cite{GerMeyOru}. We start with a refinement of the usual Strichartz inequality. It is well-known that the Strichartz inequality
\begin{equation}\label{eq:strichartz-inequa}
\|e^{it\Delta^2}u_0\|_{L^{\frac{2(n+4)}{n}}(\mathbb{R}\times\mathbb{R}^n)}\le C_{n} \|u_0\|_{L^2}
\end{equation}
is optimal for Lebesgues spaces; but it is sub-optimal within some scale of Besov-spaces. This improvement opens a door to address the defect of compactness of the solution operator.

\begin{lemma}[Refinement of the Strichartz inequality]\label{le:refinement-strichartz}
\begin{equation}\label{eq:refinement-strichartz-1}
\|e^{it\Delta^2}u_0\|_{Z}\le C_{n} \|u_0\|^{\frac {n}{n+4}}_{L^2}\bigl( \sup_N\|P_Ne^{it\Delta^2}u_0\|_{Z}\bigr)^{\frac 4{n+4}},
\end{equation}
where the supremum is taken over all dyadic integers $N=2^k$. Furthermore,
\begin{equation}\label{eq:refinement-strichartz-2}
\|e^{it\Delta^2}u_0\|_{Z}\le C_{n} \|u_0\|^{\frac {n^2+8n+8}{(n+4)^2}}_{L^2}\bigl( \sup_N N^{-\frac{n}{2}}\|P_N e^{it\Delta^2}u_0\|_{L^{\infty}(\mathbb{R}\times\mathbb{R}^n)}\bigr)^{\frac {8}{(n+4)^2}}.
\end{equation}
\end{lemma}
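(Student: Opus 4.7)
Following the approach of Killip--Visan \cite{KilVis2} and Shao \cite{Shao1}, I will prove \eqref{eq:refinement-strichartz-1} by combining the Littlewood-Paley square function with a bilinear Strichartz-type estimate for $e^{it\Delta^2}$, and then deduce \eqref{eq:refinement-strichartz-2} from \eqref{eq:refinement-strichartz-1} by a clean H\"older-plus-Bernstein interpolation.

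For \eqref{eq:refinement-strichartz-1}, write $f=e^{it\Delta^2}u_0=\sum_N f_N$ with $f_N=P_N f$, and exploit the square-function equivalence $\|f\|_Z\sim\|(\sum_N|f_N|^2)^{1/2}\|_{L^p_{t,x}}$ with $p=2(n+4)/n$. The key input is a bilinear estimate of the form
\begin{equation*}
\|(e^{it\Delta^2}P_Mu_0)(e^{it\Delta^2}P_Nu_0)\|_{L^2_{t,x}}\lesssim \Bigl(\frac{M}{N}\Bigr)^{\alpha}\|P_Mu_0\|_{L^2}\|P_Nu_0\|_{L^2},\qquad M\le N,
\end{equation*}
for some $\alpha>0$, proved via a $TT^*$ computation using the non-degenerate Hessian of the phase $|\xi|^4$ and the transversality of its characteristic set when the frequencies are well separated. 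Expanding $\|f\|_Z^p$, treating off-diagonal dyadic pairs $M\ll N$ by this bilinear estimate and diagonal pairs $M\sim N$ by the standard Strichartz bound \eqref{StricEst}, and summing via Cauchy-Schwarz combined with the orthogonality $\sum_N\|P_Nu_0\|_{L^2}^2\sim \|u_0\|_{L^2}^2$, produces \eqref{eq:refinement-strichartz-1} with the exponents $n/(n+4)$ and $4/(n+4)$ forced by $L^2$-scaling.

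For \eqref{eq:refinement-strichartz-2}, I first establish the dyadic refinement
\begin{equation*}
\|f_N\|_Z\lesssim \|P_Nu_0\|_{L^2}^{(n+2)/(n+4)}\bigl(N^{-n/2}\|f_N\|_{L^\infty_{t,x}}\bigr)^{2/(n+4)}.
\end{equation*}
This comes from the elementary H\"older inequality
\begin{equation*}
\|f_N\|_{L^p_{t,x}}^p \le \|f_N\|_{L^\infty_{t,x}}^{\,p-r}\|f_N\|_{L^r_{t,x}}^{r},
\end{equation*}
with $r=2(n+2)/n$ chosen so that the diagonal pair $(r,r)$ is $S$-admissible in the paper's sense ($2/r+n/r=n/2$). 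The Strichartz estimate \eqref{StricEst} together with Bernstein then yields $\|f_N\|_{L^r_{t,x}}\lesssim N^{-n/(n+2)}\|P_Nu_0\|_{L^2}$, and writing $\|f_N\|_{L^\infty_{t,x}}^{2/(n+4)}=N^{n/(n+4)}(N^{-n/2}\|f_N\|_{L^\infty_{t,x}})^{2/(n+4)}$ one sees the $N$-powers cancel exactly, yielding the displayed dyadic estimate. Taking $\sup_N$ and substituting into \eqref{eq:refinement-strichartz-1} (with $\|P_Nu_0\|_{L^2}\le\|u_0\|_{L^2}$) arranges the exponents to $(n^2+8n+8)/(n+4)^2$ and $8/(n+4)^2$, which sum to $1$ as required by scaling.

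The main obstacle is establishing the bilinear Strichartz estimate with the optimal $M/N$ gain for the fourth-order phase $|\xi|^4$: the stationary-phase analysis is more delicate than for Schr\"odinger because of the higher-order vanishing of the symbol at the origin, though the underlying transversality mechanism is the same. All remaining ingredients --- the Littlewood-Paley square function, the Whitney-type summation, the H\"older interpolation, and the Bernstein bound on $P_N$-localized functions --- are routine once the bilinear estimate is in hand.
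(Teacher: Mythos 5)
Your argument for \eqref{eq:refinement-strichartz-2} given \eqref{eq:refinement-strichartz-1} is correct and essentially identical to the paper's: the H\"older split of $\|f_N\|_{Z}$ into $L^{2(n+2)/n}_{t,x}$ and $L^\infty_{t,x}$ pieces, the observation that $(r,r)$ with $r=2(n+2)/n$ is $S$-admissible, the Bernstein/Strichartz bound $\|f_N\|_{L^r_{t,x}}\lesssim N^{-n/(n+2)}\|P_N u_0\|_{L^2}$, and the exact cancellation of $N$-powers are all exactly what the paper does.

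The proof of \eqref{eq:refinement-strichartz-1}, however, has a genuine gap, and on two counts. First, and most seriously, your sketch never explains where the refinement factor $\bigl(\sup_N\|P_N e^{it\Delta^2}u_0\|_{Z}\bigr)^{4/(n+4)}$ comes from. A bilinear $L^2_{t,x}$ estimate combined with Cauchy--Schwarz and orthogonality would produce a bound in $\|u_0\|_{L^2}$ alone, not the refined bound: the key ``peeling'' step is to write, after the double-sum decomposition,
\[
|f_M|^{\frac{n+4}{n}}|f_N|^{\frac{n+4}{n}}=\bigl(|f_M||f_N|\bigr)\cdot\bigl(|f_M||f_N|\bigr)^{\frac 4n},
\]
place the second factor in $L^{(n+4)/2}_{t,x}$ (which is precisely what turns into $\bigl(\sup_K\|f_K\|_{Z}\bigr)^{8/n}$), and only then sum the first factor with a geometric gain in $M/N$. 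You skip this step entirely. Your remark that the exponents $n/(n+4)$ and $4/(n+4)$ are ``forced by $L^2$-scaling'' is also not correct: both $\|u_0\|_{L^2}$ and $\sup_N\|P_N e^{it\Delta^2}u_0\|_{Z}$ are individually scale-invariant, so scaling alone permits any convex combination of the two; the specific exponents are a consequence of the argument, not of dimensional analysis.

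Second, the bilinear estimate you take as the ``key input'' and ``main obstacle'' is neither proved nor needed. The paper avoids a $TT^*$/bilinear restriction argument entirely: after the peeling step above, it estimates $\iint |f_M||f_N|\,dx\,dt$ by putting $f_M$ in $L^{2(n+4)/n}_t L^{2(n+4)/(n-2)}_x$ and $f_N$ in $L^{2(n+4)/n}_t L^{2(n+4)/(n+2)}_x$, then using Bernstein plus linear Strichartz on each factor. This already produces the decay $(M/N)^{n/(n+4)}\|P_Mu_0\|_{L^2}\|P_Nu_0\|_{L^2}$ needed for Schur summation, with no new bilinear estimate required. So the approach you label as the hard part can be replaced by a soft H\"older--Bernstein argument that is entirely routine, while the part you label as routine is where the refinement actually lives. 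Finally, note that even if you did prove the bilinear $L^2_{t,x}$ estimate, you could not insert it directly into $\sum_{M\le N}\iint|f_M|^{(n+4)/n}|f_N|^{(n+4)/n}$ in dimensions $n\ge 5$, since the total degree $2(n+4)/n<4$; you would still need the fractional splitting and an interpolation, so the bilinear estimate does not by itself deliver \eqref{eq:refinement-strichartz-1}.
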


\begin{proof}
We begin with the proof of \eqref{eq:refinement-strichartz-1} when $n\ge 4$. By the Littlewood-Paley square function inequality in Stein \cite[p. 267]{Stein:large} and the Bernstein property \eqref{BernSobProp}, we have that
\begin{align*}
&\|e^{it\Delta^2} u_0\|^\frac{2(n+4)}{n}_{Z} \simeq \|\bigl(\sum_{N} |e^{it\Delta^2}P_N u_0|^2 \bigr)^\frac{1}{2}\|^\frac{2(n+4)}{n}_{L^\frac{2(n+4)}{n}_{t,x}}\\
&=\iint \bigl(\sum_{M} |e^{it\Delta^2}P_M u_0|^2\bigr)^{\frac{n+4}{2n}}\bigl(\sum_{N} |e^{it\Delta^2}P_N u_0|^2\bigr)^{\frac{n+4}{2n}} dxdt\\
&\lesssim \sum_{M\le N} \iint \bigl|e^{it\Delta^2}P_M u_0\bigr|^{\frac{n+4}{n}}\bigl|e^{it\Delta^2}P_N u_0\bigr|^\frac{n+4}{n}dxdt\\
&\lesssim \sum_{M\le N} \iint |e^{it\Delta^2}P_M u_0||e^{it\Delta^2}P_M u_0|^\frac{4}{n}|e^{it\Delta^2}P_N u_0|^\frac{4}{n}|e^{it\Delta^2}P_N u_0|dxdt\\
&\lesssim \sum_{M\le N} \|e^{it\Delta^2}P_M u_0\|_{L^\frac{2(n+4)}{n}(L^\frac{2(n+4)}{n-2})}\|e^{it\Delta^2}P_N u_0\|_{L^\frac{2(n+4)}{n}(L^\frac{2(n+4)}{n+2})}\\
&\times \|(e^{it\Delta^2}P_M u_0)^\frac{4}{n}\|_{L^{\frac {n+4}{2}}_{t,x}}\|(e^{it\Delta^2}P_N u_0)^\frac{4}{n}\|_{L^{\frac {n+4}{2}}_{t,x}}\\
&\lesssim \left(\sum_{N} N^\frac{2n}{n+4}\|e^{it\Delta^2}P_N u_0\|^2_{L^{\frac {2(n+4)}{n}}(\mathbb{R},L^\frac{2(n+4)}{n+2})}\right)\left(\sup_N \|e^{it\Delta^2}P_N u_0\|_{L_{t,x}^{\frac {2(n+4)}{n}}}\right)^\frac{8}{n}\\
&\lesssim  \left(\sum_{N} \| P_N u_0\|^2_{L^2}\right)\left(\sup_N \|e^{it\Delta^2}P_N u_0\|_{L_{t,x}^{\frac {2(n+4)}{n}}}\right)^\frac{8}{n},
\end{align*}
where we used at the last line the Strichartz inequality \eqref{StricEst} to get
$$\Vert \vert\nabla\vert^\frac{n}{n+4}e^{it\Delta^2}u_0\Vert_{L^\frac{2(n+4)}{n}(\mathbb{R},L^\frac{2(n+4)}{n+2})}\lesssim \Vert u_0\Vert_{L^2}.$$ Then \eqref{eq:refinement-strichartz-1} follows when $n\ge 4$. When $1\le n<4$, we choose to deal with $n=3$ only; other cases are treated similarly.
\begin{align*}
&\|e^{it\Delta^2} u_0\|^{\frac{14}{3}}_{L^{\frac{14}{3}}(\mathbb{R}\times\mathbb{R}^n)} \simeq \|\bigl(\sum_{N} |e^{it\Delta^2}P_N u_0|^2 \bigr)^\frac{1}{2}\|^\frac{14}{3}_{L^\frac{14}{3}_{t,x}}\\
&=\iint \bigl(\sum_{L} |e^{it\Delta^2}P_L u_0|^2\bigr)\bigl(\sum_{M} |e^{it\Delta^2}P_M u_0|^2\bigr)^\frac{2}{3}\bigl(\sum_{N} |e^{it\Delta^2}P_N u_0|^2\bigr)^\frac{2}{3} dxdt\\
&\lesssim \iint \sum_{L,M,N} \bigl(\vert e^{it\Delta^2}P_Lu_0\vert \vert e^{it\Delta^2}P_M u_0\vert \vert e^{it\Delta^2}P_Nu_0\vert\bigr)^\frac{4}{3} \vert e^{it\Delta^2}P_L u_0\vert^\frac{2}{3} dxdt\\
&\lesssim \sum_{L,M,N}\Vert (e^{it\Delta^2}P_Lu_0)(\vert e^{it\Delta^2}P_M u_0\vert \vert e^{it\Delta^2}P_Nu_0\vert)^\frac{1}{3}\Vert_{L^\frac{14}{5}(\mathbb{R}\times\mathbb{R}^n)}\\
&\times \Vert (e^{it\Delta^2}P_Lu_0)(e^{it\Delta^2}P_M u_0) (e^{it\Delta^2}P_Nu_0)\Vert_{L^\frac{14}{9}(\mathbb{R}\times\mathbb{R}^n)}\\
&\lesssim \sup_M\Vert e^{it\Delta^2}P_Mu_0\Vert_{L^\frac{14}{3}_{t,x}}^\frac{5}{3}\sum_{L,M,N}\Vert (e^{it\Delta^2}P_Lu_0)(e^{it\Delta^2}P_M u_0) (e^{it\Delta^2}P_Nu_0)\Vert_{L^\frac{14}{9}(\mathbb{R}\times\mathbb{R}^n)}
\end{align*}
and estimating the term in the sum corresponding to the lowest frequency in $L^\frac{14}{3}(L^\infty)$, the term corresponding to the second lowest frequency in $L^\frac{14}{3}(L^\frac{14}{4})$ and the last term in $L^\frac{14}{3}(L^\frac{14}{5})$, we get, using Bernstein Property \eqref{BernSobProp}
\begin{align*}
&\|e^{it\Delta^2} u_0\|^{\frac{14}{3}}_{L^{\frac{14}{3}}(\mathbb{R}\times\mathbb{R}^n)}\\
&\lesssim \sup_M\|e^{it\Delta^2}P_{M} u_0\|_{L^\frac{14}{3}_{t,x}}^\frac{8}{3}\sum_{L\le M\le N} L^\frac{9}{14}M^\frac{1}{14}\Vert e^{it\Delta^2}P_Mu_0\Vert_{L^\frac{14}{3}(L^\frac{14}{5})}\|e^{it\Delta^2}P_{N} u_0\|_{L^\frac{14}{3}(L^\frac{14}{5})}.
\end{align*} Schur's lemma then gives \eqref{eq:refinement-strichartz-1}. The argument proceeds similarly when $n=1,2$.

To prove \eqref{eq:refinement-strichartz-2}, it suffices to show that
$$\|e^{it\Delta^2} P_Nu_0\|_Z\lesssim \|u_0\|^{\frac {n+2}{n+4}}_{L^2}\bigl(N^{-n/2} \|e^{it\Delta^2} P_Nu_0\|_{L^{\infty}(\mathbb{R}\times\mathbb{R}^n)}\bigr)^{\frac 2{n+4}}.$$
This follows from the following two facts. First, using H\"older's inequality, we get
\begin{equation*}
\|e^{it\Delta^2} P_Nu_0\|_Z\le \|e^{it\Delta^2} P_Nu_0\|^{\frac {n+2}{n+4}}_{L^\frac{2(n+2)}{n}(\mathbb{R}\times\mathbb{R}^n)}\|e^{it\Delta^2} P_Nu_0\|^{\frac 2{n+4}}_{L^{\infty}(\mathbb{R}\times\mathbb{R}^n)},
\end{equation*}
then using Bernstein properties \eqref{BernSobProp},
\begin{equation*}
\begin{split}
\|e^{it\Delta^2} P_N u_0\|_{L^\frac{2(n+2)}{n}(\mathbb{R}\times\mathbb{R}^n)}&\simeq N^{-\frac {n}{n+2}}\||\nabla|^{\frac{n}{n+2}}e^{it\Delta^2} P_Nu_0\|_{L^\frac{2(n+2)}{n}(\mathbb{R}\times\mathbb{R}^n)}\\
&\lesssim N^{-\frac{n}{n+2}} \|P_N u_0\|_{L^2}.
\end{split}
\end{equation*}
Hence the proof of Lemma \ref{le:refinement-strichartz} is complete.
\end{proof}

Then a standard greedy algorithm as used in Lemma 3.2 in Shao \cite{Shao2} gives Lemma \ref{le:linear-profile}, see also Carles and Keraani \cite{CarKer}, hence its proof is omitted. Note that  we can select those three parameters $(h_k^j,x_k^j,t_k^j)$ in Lemma \ref{le:linear-profile} simultaneously.


\subsection{A stability lemma.}\label{SubSec-StabLemma} Roughly speaking, the stability lemma says that, if initial data are close enough, then the solutions will be close.
\begin{lemma}\label{lemStab} For any $B>0$ and $\varepsilon >0$, there exists $\delta>0$ such that if $v$ is an approximate solution of \eqref{4NLS} in the sense that
\begin{equation}\label{ApproxEquation}
i\partial_tv+\Delta^2v+\lambda\vert v\vert^\frac{8}{n}v+e=0
\end{equation}
on some interval $I$ with $0\in I$ satisfying $\Vert v\Vert_{Z(I)}\le B$, and two smallness conditions,
\begin{equation}\label{AddedHypothApproxLemm1}
\Vert e\Vert_{N(I)}\le \delta,
\end{equation}
and
\begin{equation}\label{AddedHypothApproxLemm2}
\Vert e^{it\Delta^2}\left(v(0)-u_0\right)\Vert_{Z(I)}\le\delta
\end{equation} for any $u_0\in L^2$. Then there exists a unique strong solution $u\in S^0(I)$ of \eqref{4NLS} which satisfies
$\Vert u-v\Vert_{Z(I)}\le \varepsilon$ and $\Vert u-v\Vert_{S^0(I)}\lesssim \Vert v(0)-u_0\Vert_{L^2}+\varepsilon$.
\end{lemma}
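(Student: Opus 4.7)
The natural approach is to set $u=v+w$ and look for $w$ satisfying the perturbed equation
\begin{equation*}
i\partial_tw+\Delta^2w+\bigl[F(v+w)-F(v)\bigr]=e,\qquad w(0)=u_0-v(0),
\end{equation*}
obtained by subtracting \eqref{ApproxEquation} from \eqref{4NLS}, where $F(u)=\lambda|u|^{8/n}u$. By the Strichartz estimate \eqref{StricEst} applied to this equation, on any subinterval $J\subset I$ containing a base time $t_\ast$,
\begin{equation*}
\|w\|_{Z(J)}\lesssim \|e^{i(t-t_\ast)\Delta^2}w(t_\ast)\|_{Z(J)}+\|F(v+w)-F(v)\|_{N(J)}+\|e\|_{N(J)},
\end{equation*}
and similarly for $\|w\|_{S^0(J)}$ with the first term on the right replaced by $\|w(t_\ast)\|_{L^2}$; note that for $t_\ast=0$ the hypotheses \eqref{AddedHypothApproxLemm1} and \eqref{AddedHypothApproxLemm2} make both the error term and the free evolution of the initial data small.

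Next, I would partition $I$ into $J=J(B,\eta)$ consecutive subintervals $I_j$ with $\|v\|_{Z(I_j)}\le\eta$, which is possible with $J\lesssim (B/\eta)^{2(n+4)/n}$ since $\|v\|_{Z(I)}\le B$; here $\eta>0$ is a small constant to be chosen at the end. The central technical ingredient is the nonlinear difference estimate
\begin{equation*}
\|F(v+w)-F(v)\|_{N(I_j)}\lesssim \bigl(\|v\|_{Z(I_j)}^{8/n}+\|w\|_{Z(I_j)}^{8/n}\bigr)\|w\|_{Z(I_j)},
\end{equation*}
together with its Lipschitz companion in $w$. This is the main obstacle: because the $N$-norm carries a negative fractional weight $|\nabla|^{-n/(n+4)}$, the pointwise bound $|F(v+w)-F(v)|\lesssim (|v|^{8/n}+|w|^{8/n})|w|$ does not suffice on its own. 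I would handle the derivative via a Christ-Weinstein fractional Leibniz rule combined with H\"older's inequality in space-time, using that the pair $(2(n+4)/(n+8),2(n+4)/(n+6))$ is dual to a Strichartz-admissible pair once one pays the $n/(n+4)$ derivative.

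With the nonlinear estimate in hand, on each $I_j=[t_j,t_{j+1}]$ (starting from $t_0=0$ and moving outward in both time directions) I would set
\begin{equation*}
\delta_j:=\|e^{i(t-t_j)\Delta^2}\bigl(u(t_j)-v(t_j)\bigr)\|_{Z(I_j)}+\delta,
\end{equation*}
with the convention $u(0)=u_0$, and run a contraction argument for the Duhamel operator associated to $w$ in the closed ball $\{w\in Z(I_j):\|w\|_{Z(I_j)}\le C\delta_j\}$. Provided $\eta$ (depending only on Strichartz constants and $n$) and $\delta_j$ are small enough, the contraction produces a unique $w$ on $I_j$ with $\|u-v\|_{Z(I_j)}\lesssim\delta_j$ and $\|u-v\|_{S^0(I_j)}\lesssim \|u(t_j)-v(t_j)\|_{L^2}+\delta_j$. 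Applying Strichartz once more to the difference equation then gives $\delta_{j+1}\le C_1\delta_j+C_2\delta$ with $C_1,C_2$ depending only on $\eta$ and $n$, so by induction $\delta_j\lesssim C_1^j\delta$. Since $J$ is fixed once $\varepsilon$, $B$, and $\eta$ are, picking $\delta=\delta(\varepsilon,B)$ sufficiently small finally yields $\sum_j\delta_j\le\varepsilon$; summing the per-interval estimates produces $\|u-v\|_{Z(I)}\le\varepsilon$ and $\|u-v\|_{S^0(I)}\lesssim\|u_0-v(0)\|_{L^2}+\varepsilon$.
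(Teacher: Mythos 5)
Your overall skeleton matches the paper's: write $w=u-v$, apply Strichartz to the difference equation, subdivide $I$ into $J(B,\eta)$ subintervals on which $\|v\|_Z\le\eta$, and close the estimate by iteration/Gronwall. That part is fine.

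The place you flag as ``the main obstacle,'' however, is not an obstacle at all, and the tool you reach for does not apply. You claim that because the $N$-norm carries the weight $|\nabla|^{-n/(n+4)}$, the pointwise bound $|F(v+w)-F(v)|\lesssim(|v|^{8/n}+|w|^{8/n})|w|$ ``does not suffice on its own,'' and you propose to handle the weight with a Christ--Weinstein fractional Leibniz rule. But $|\nabla|^{-n/(n+4)}$ is a \emph{negative} power, i.e.\ a smoothing Riesz-type operator, and there is no Leibniz rule (and no need for one) for such an operator: you cannot ``distribute'' $|\nabla|^{-s}$ across a product. What one actually does is simply kill the weight via Sobolev embedding. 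Since $\tfrac{n+6}{2(n+4)}=\tfrac{n+8}{2(n+4)}-\tfrac{1}{n+4}$, one has
\begin{equation*}
\bigl\Vert\,|\nabla|^{-\frac{n}{n+4}}g\,\bigr\Vert_{L^{\frac{2(n+4)}{n+6}}_x}\lesssim\Vert g\Vert_{L^{\frac{2(n+4)}{n+8}}_x},
\end{equation*}
so that $\Vert g\Vert_{N(I)}\lesssim\Vert g\Vert_{L^{2(n+4)/(n+8)}_{t,x}(I)}$, and then H\"older with exponents adding up as $\tfrac{n+8}{2(n+4)}=\tfrac{8}{n}\cdot\tfrac{n}{2(n+4)}+\tfrac{n}{2(n+4)}$ together with the pointwise bound gives precisely the estimate you wrote down. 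This is what the paper uses implicitly when it passes from the equation for $w$ to the inequality
\begin{equation*}
\Vert w\Vert_{Z([0,t])}\lesssim\Vert e^{it\Delta^2}w(0)\Vert_{Z(I)}+\Vert w\Vert_{Z([0,t])}^{\frac{n+8}{n}}+\Vert w\Vert_{Z([0,t])}\Vert v\Vert_{Z(I)}^{\frac{8}{n}}+\Vert e\Vert_{N(I)}.
\end{equation*}
So the correct fix is: replace the fractional-Leibniz step by the Sobolev embedding above; the rest of your argument then goes through.

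One smaller point: your recursion $\delta_{j+1}\le C_1\delta_j+C_2\delta$ is not immediate with $\delta_j$ defined only over $I_j$, since when you advance from $t_j$ to $t_{j+1}$ the free evolution of $w(t_j)$ must be measured on $I_{j+1}$, not $I_j$. You should either measure the free evolution over $(t_j,\sup I)$ (equivalently, unroll Duhamel all the way back to $t=0$ as the paper does, obtaining $X_{j+1}\lesssim\Vert e^{it\Delta^2}w(0)\Vert_{Z(I_{j+1})}+\sum_{k\le j}Y_k$), or redefine $\delta_j$ accordingly. The conclusion is unchanged since $J$ depends only on $B$ and $\eta$, but the bookkeeping needs that adjustment.
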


\begin{proof}[Proof of Lemma \ref{lemStab}]

A standard consequence of Strichartz estimates \eqref{StricEst} is that there exists $\kappa>0$ depending only on $n$ and $\lambda$ such that for any interval $I$ and any $t_\ast$ in the closure of $I$, if $u_0\in L^2$ satisfies
\begin{equation}\label{LocExCond}
\Vert e^{it\Delta^2}u_0\Vert_{Z(I)}\le \kappa
\end{equation}
then there exists a unique strong solution $u\in S(I)$ of \eqref{4NLS} such that
\begin{equation*}
\Vert e^{it_k\Delta^2}u_0-u(t_k)\Vert_{L^2}\to 0\hskip.1cm\hbox{as}\hskip.1cm t_k\to t_\ast.
\end{equation*}
Besides, any strong solution $u$ on a compact time interval $[-T_\ast,T^\ast]$ with finite $Z([-T_\ast,T^\ast])$-norm can be extended to a strong solution on a strictly bigger interval $[-T_\ast-s,T^\ast+s]$ for some $s>0$.

Now, we prove Lemma \ref{lemStab}.
We can assume that $I=[0,\gamma]$ for some $\gamma>0$.
We first claim that there exists $b>0$ and $\delta_0>0$ depending only on $n$ and $\lambda$ such that if $u$ and $v$ are as above and
\eqref{AddedHypothApproxLemm1}, \eqref{AddedHypothApproxLemm2} holds with $B=b$ and $\delta\le \delta_0$, then $u$ exists on $I$,
\begin{equation}\label{Claim1Lem3}
\begin{split}
&\Vert u-v\Vert_{Z(I)}\lesssim \Vert e^{it\Delta^2}(v(0)-u_0)\Vert_{Z(I)}+\Vert e\Vert_{N(I)}\hskip.1cm,\hskip.1cm\hbox{and}\\
&\Vert e+\lambda\left(\vert u\vert^\frac{8}{n}u-\vert v\vert^\frac{8}{n}v\right)\Vert_{N(I)}\lesssim \Vert e^{it\Delta^2}(v(0)-u_0)\Vert_{Z(I)}+\Vert e\Vert_{N(I)}.
\end{split}
\end{equation}
To prove this, let $w=u-v$. Then $w$ satisfies the equation
\begin{equation*}\label{ApproxEquationByW}
i\partial_tw+\Delta^2w+\lambda\left(\vert v+w\vert^\frac{8}{n}(v+w)-\vert v\vert^\frac{8}{n}v\right)-e=0
\end{equation*}
and applying Strichartz estimates, we get, for $t$ such that $[0,t]\subset I$ and $u$ is defined on $[0,t]$,
\begin{equation}\label{StricEstApplyedToW}
\begin{split}
\Vert w\Vert_{Z([0,t])}&\lesssim \Vert e^{it\Delta^2}w(0)\Vert_{Z(I)}+\Vert w\Vert_{Z([0,t])}^\frac{n+8}{n}+\Vert w\Vert_{Z([0,t])}\Vert v\Vert_{Z(I)}^\frac{8}{n}+\Vert e\Vert_{N(I)}\\
&\lesssim \Vert e^{it\Delta^2}w(0)\Vert_{Z(I)}+\Vert e\Vert_{N(I)}+\Vert w\Vert_{Z([0,t])}^\frac{n+8}{n},
\end{split}
\end{equation}
provided that $b$ is sufficiently small.
Besides, $h(t)= \Vert w\Vert_{Z([0,t])}$ is a continuous function of $t$ defined on $I$ satisfying $h(0)=0$. As a consequence of \eqref{StricEstApplyedToW} we then get that if $\delta\le\delta_0$ with $\delta_0$ sufficiently small, then $u$ exists on $I$ and
\begin{equation}\label{ProofOfLem3ControlOfW}
\Vert w\Vert_{Z([0,t])}\lesssim \Vert e^{it\Delta^2}w(0)\Vert_{Z(I)}+\Vert e\Vert_{N(I)}
\end{equation}
for all $t\in I$, and hence using \eqref{ProofOfLem3ControlOfW} and H\"older's inequality, we get \eqref{Claim1Lem3} in this special case. Now, in the general case, let $M=[(B/b)^\frac{n+8}{n}]$, and divide $I=\bigcup_{j=0}^MI_j$ into subintervals $I_j=[\alpha_j,\alpha_{j+1}]$ with
$0\le j\le M$ and $0=\alpha_0\le\alpha_1\le\dots\le\alpha_{M+1}=\gamma$ and such that \eqref{AddedHypothApproxLemm1} holds on $I_j$ with $B=b$.
For $0\le j\le M$, let
\begin{equation*}
\begin{split}
&X_j=\Vert e^{i(t-\alpha_j)}w(\alpha_j)\Vert_{Z(I_j)}\\
&Y_j=\Vert \lambda\left(\vert u\vert^\frac{8}{n}u-\vert v\vert^\frac{8}{n}v\right)-e\Vert_{N(I_j)}\hskip.1cm,\hskip.1cm\hbox{and}\\
&Z_j=\Vert w\Vert_{Z(I_j)}.
\end{split}
\end{equation*}
Then, the special case we have just treated insures that, if $\delta,X_j\le \delta_0$ for some $\delta_0$, then there exists $C_y$ and $C_z$ such that
$Y_j\le C_y\left(X_j+\delta\right)$ and $Z_j\le C_z\left(X_j+\delta\right)$. Besides, using Strichartz estimates, we have that
\begin{equation*}
\begin{split}
&X_{j+1}\\
&=\Vert e^{i(t-\alpha_{j+1})\Delta^2}w(\alpha_{j+1})\Vert_{Z(I_{j+1})}\\
&=\Vert e^{it\Delta^2}w(0)-i\int_0^{\alpha_{j+1}}e^{i(t-s)\Delta^2}\left(\lambda\left(\left(\vert u\vert^\frac{8}{n}u\right)-\left(\vert v\vert^\frac{8}{n}v\right)\right)(s)-e(s)\right)ds\Vert_{Z(I_{j+1})}\\
&\lesssim \Vert e^{it\Delta^2}w(0)\Vert_{Z(I_{j+1})}+\sum_{k=0}^j\Vert \lambda\left(\vert u\vert^\frac{8}{n}u-\vert v\vert^\frac{8}{n}v\right)-e\Vert_{N(I_k)}\\
&\le C_x\left( \delta+\sum_{k\le j}Y_k\right)
\end{split}
\end{equation*}
for some constant $C_x>1$.
Now, let $(x_k)_k$, $(y_k)_k$ and $(z_k)_k$ be defined by $x_0=\delta$ and by
$x_{k+1}=C_x(\delta+\sum_{j\le k}y_j)$, $y_k=C_y\left(x_k+\delta\right)$ and $z_k=C_z\left(x_k+\delta\right)$. Then, we see that for fixed $j$, $x_j,z_j$ are linear functions of $\delta$. Choose $\delta>0$ sufficiently small so that
$$x_M\le \delta_0\hskip.2cm\hbox{and}\hskip.2cm\sum_{j=0}^Mz_j^\frac{2(n+4)}{n}\le \varepsilon^\frac{2(n+4)}{n}.$$ Then, using the discrete Gromwall's lemma, we see that for all $j$, $\Vert u\Vert_{Z(I_j)}$ is a priori bounded, hence $u$ and $w$ exist on all of $I$, and
$$\Vert u-v\Vert_{Z(I)}=\left(\sum_{j=0}^MZ_j^\frac{2(n+4)}{n}\right)^\frac{n}{2(n+4)}\le \varepsilon.$$
Another application of Strichartz estimates finishes the proof.
\end{proof}

A consequence of Lemma \ref{lemStab} is that a nonlinear solution $u\in S_{loc}(I)$ can be extended to a strictly larger interval of existence (as a strong solution) if and only if $u$ has finite $Z$-norm. Thus, it is natural to introduce
$$\Lambda(L)=\sup\{\Vert u\Vert_{Z(I)}^\frac{2(n+4)}{n}:\hbox{for all interval}\hskip.1cm I\hskip.1cm\hbox{and all solutions}\hskip.1cm u\hskip.1cm\hbox{such that}\hskip.1cm M(u)<L\}.$$
A consequence of local wellposedness is that $\Lambda$ is sublinear in a neighborhood of $0$. We also let
$$M_{max}=\sup\{M>0: \Lambda(M)<+\infty\}.$$
In order to treat the radially symmetrical case, we also consider
$$\Lambda^{rad}(L)=\sup\{\Vert u\Vert_{Z(I)}:\hbox{for all}\hskip.1cm I\hskip.1cm\hbox{and all radial solutions}\hskip.1cm u\hskip.1cm\hbox{such that}\hskip.1cm M(u)<L\},$$
and we let
$$M_{max}^{rad}=\sup\{M>0: \Lambda^{rad}(M)<+\infty\}.$$
In view of the discussion above, the goal of this paper is to prove that, when $n\ge 5$, $M_{max}=+\infty$ in the defocusing case and $M_{max}^{rad}=M(Q)$ in the focusing case.


\subsection{Existence of minimal-Mass blow-up solution: Palais-Smale lemma modulo symmetries.}
In this section, we establish the first part of Theorem \ref{3ScenariosThm} by exhibiting an element with minimal Mass and infinite $Z$-norm and proving that \eqref{CompactnessImage} holds.
\begin{lemma}[Palais-Smale lemma modulo symmetries]\label{lem4} For any sequence of nonlinear solutions $(u_k)$ of \eqref{4NLS}
defined on an interval $(-T_k,T^k)$, and any sequence of time $(t_k)$ such that $M(u_k)\to M_{max}$ and
\begin{equation}\label{CondForCompactnessBigSNorm}
\lim_k\Vert u_k\Vert_{Z(-T_k,t_k)}=\lim_k\Vert u_k\Vert_{Z(t_k,T^k)}=+\infty,
\end{equation}
there exists two sequences $(N_k)_k$ and $(y_k)_k$, and a function $w\in L^2$ of Mass exactly $M_{max}$ such that, up to a subsequence,
$g_{(N_k,y_k)}^{-1}(u_k(t_k))\to w$ in $L^2$, where $g$ is defined in \eqref{DefG}.
\end{lemma}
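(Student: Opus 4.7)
The plan is to apply Lemma \ref{le:linear-profile} to the bounded sequence $u_k(t_k)\in L^2$, promote each linear profile to a nonlinear one, and use the stability lemma together with the definition of $M_{max}$ to force all but one profile to vanish.

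First I would apply Lemma \ref{le:linear-profile} to $(u_k(t_k))_k$: up to extraction, one obtains profiles $(\phi^j)\subset L^2$, pairwise orthogonal scale-cores $(h_k^j,t_k^j,x_k^j)$, and errors $w_k^l$ satisfying \eqref{eq:prof}--\eqref{eq:strichartz-ortho}. Evaluating \eqref{eq:prof} at $t=0$ gives the spatial decomposition
\[
u_k(t_k)=\sum_{j=1}^{l}g_{(h_k^j,x_k^j)}\bigl(e^{-i(h_k^j)^4 t_k^j\Delta^2}\phi^j\bigr)+w_k^l.
\]
For each $j$ I would pass to a subsequence and split on the behaviour of $\sigma_k^j:=-(h_k^j)^4t_k^j$: if $\sigma_k^j\to T^j\in\R$, I absorb $e^{iT^j\Delta^2}$ into $\phi^j$ and define the nonlinear profile $v^j$ as the maximal-lifespan solution of \eqref{4NLS} with $v^j(0)=\phi^j$; if $\sigma_k^j\to\pm\infty$, I define $v^j$ as the nonlinear solution scattering to $e^{is\Delta^2}\phi^j$ at the appropriate infinity (well-defined because the $Z$-norm of $e^{is\Delta^2}\phi^j$ is arbitrarily small near infinity, so the standard small-data theory based on \eqref{StricEst} applies).

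Next I would exclude the possibility that more than one profile contributes, or that a single profile carries mass strictly less than $M_{max}$. In both cases, $M(\phi^j)<M_{max}$ for every $j$, so by the very definition of $M_{max}$ each $v^j$ is global with finite $Z$-norm. I would then introduce the approximate solution
\[
\tilde u_k^l(t)=\sum_{j=1}^{l}\tau_{(h_k^j,t_k^j,x_k^j)}v^j(t)+e^{i(t-t_k)\Delta^2}w_k^l,
\]
arranging translations so that $\tilde u_k^l(t_k)$ reproduces $u_k(t_k)$ up to $o_{L^2}(1)$. Using \eqref{eq:strichartz-ortho} together with the pointwise bound $|F(a+b)-F(a)-F(b)|\lesssim |a|^{8/n}|b|+|a||b|^{8/n}$ and a Schwartz-density approximation of each $v^j$, one checks that $\tilde u_k^l$ solves \eqref{4NLS} up to an error vanishing in $N(\R)$-norm, while $\|\tilde u_k^l\|_{Z(\R)}=O(1)$ uniformly. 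Lemma \ref{lemStab} then gives $\|u_k\|_{Z(\R)}=O(1)$, contradicting \eqref{CondForCompactnessBigSNorm}. Hence exactly one profile, say $\phi:=\phi^1$, has nonzero mass; \eqref{eq:L2-almost-ortho} forces $M(\phi)=M_{max}$ and $\|w_k^l\|_{L^2}\to 0$ in the iterated limit.

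Finally I would show that $\sigma_k:=-(h_k^1)^4 t_k^1$ stays bounded. If $\sigma_k\to+\infty$ along a subsequence, then after applying $g_{(h_k^1,x_k^1)}^{-1}$ the data $u_k(t_k)$ is $L^2$-close to $e^{i\sigma_k\Delta^2}\phi$, and since $e^{is\Delta^2}\phi\in Z(\R)$ the tail estimate yields $\|e^{i(t-t_k)\Delta^2}u_k(t_k)\|_{Z(t_k,+\infty)}\to 0$; the local existence threshold used in the proof of Lemma \ref{lemStab} would then force $\|u_k\|_{Z(t_k,T^k)}=O(1)$, contradicting \eqref{CondForCompactnessBigSNorm}. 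The case $\sigma_k\to-\infty$ is symmetric using the other half of \eqref{CondForCompactnessBigSNorm}, so $\sigma_k\to T\in\R$; absorbing $e^{iT\Delta^2}$ into $\phi$ and setting $N_k:=h_k^1$, $y_k:=x_k^1$, one concludes $g_{(N_k,y_k)}^{-1}u_k(t_k)\to w:=\phi$ in $L^2$ with $M(w)=M_{max}$. The main technical obstacle is verifying that every cross-term in the expansion of $F(\tilde u_k^l)-\sum_j F(\tau v^j)$ vanishes in $N$-norm; this requires exploiting all three components (scale, time, space) of the orthogonality \eqref{eq:ortho-parameter} simultaneously, and is the delicate core of the argument.
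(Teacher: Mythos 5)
Your proposal is correct and follows essentially the same route as the paper: apply the linear profile decomposition to $u_k(t_k)$, promote each linear profile to a nonlinear one (either by solving from initial data or by inverse scattering), use the stability lemma to rule out either multiple profiles or a single profile of subcritical mass, and finally show the time-shift parameter stays bounded via the tail-vanishing of the $Z$-norm on a half-line. The one cosmetic difference is that the paper explicitly splits on $\sup_j M(\phi^j)<M_{\max}-\eps$ versus $\sup_j M(\phi^j)=M_{\max}$ (with mass decoupling forcing the dichotomy to be exhaustive), whereas you phrase the same dichotomy as "more than one profile, or one profile of mass $<M_{\max}$", but the underlying argument is identical.
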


As a corollary of the previous lemma, we are able to extract the minimal-Mass blow-up solution. The existence of minimal Mass blow-up solutions for the Mass-critical second order Schr\"odinger equation
was first obtained by Keraani \cite{KerNew} and analogues of Theorem \ref{3ScenariosThm} in the second order case is due to B\'egout and Vargas \cite{BegVar}, Keraani \cite{KerNew} and Killip, Tao and Visan \cite{KilTaoVis}, based on the profile decompositions in Carles and Keraani \cite{CarKer}, Bourgain \cite{Bou} and Merle and Vega \cite{MerVeg}, and in B\'egout and Vargas \cite{BegVar}.
We refer also to Kenig and Merle \cite{KenMer} and Tao, Visan and Zhang \cite{TaoVisZha}.

\begin{corollary}[Minimal-Mass blow-up solutions]\label{le:minimal-mass-blow-up-solution}
Suppose that $M_{max}<+\infty$. Then there exists $u\in S^0_{loc}(T_\ast,T^\ast)$ a maximal-lifespan strong solution of Mass equal to $M_{max}$, such that $$\Vert u\Vert_{Z(T_\ast,0)} =\Vert u\Vert_{Z(0,T^\ast)}= +\infty.$$
Furthermore, there exist two smooth functions $N: I \to \mathbb{R}_+^\ast$ and $y: I \to \mathbb{R}^n$ such that $K$ defined in \eqref{CompactnessImage}
is precompact in $L^2$.
\end{corollary}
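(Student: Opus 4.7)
The plan is to apply the Palais--Smale lemma~\ref{lem4} twice: first to exhibit a minimal-mass initial datum, and then along the nonlinear flow of the resulting solution to obtain precompactness of the orbit modulo symmetries.

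\emph{Step 1: Extraction of a critical datum.} By definition of $M_{max}<+\infty$, pick maximal-lifespan strong solutions $u_k$ on $(-T_k,T^k)$ with $M(u_k)\to M_{max}$ and $\|u_k\|_{Z(-T_k,T^k)}\to+\infty$. Since $t\mapsto\|u_k\|_{Z(-T_k,t)}$ is continuous and nondecreasing, one may choose $t_k\in(-T_k,T^k)$ realising the equipartition
\[
\|u_k\|_{Z(-T_k,t_k)}=\|u_k\|_{Z(t_k,T^k)}\to+\infty;
\]
translating in time we may assume $t_k=0$. Lemma~\ref{lem4} then produces sequences $(N_k,y_k)$ and $w\in L^2$ with $M(w)=M_{max}$ such that, along a subsequence, $g_{(N_k,y_k)}^{-1}u_k(0)\to w$ in $L^2$.

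\emph{Step 2: $u$ blows up on both sides of every time.} Let $u\in S^0_{loc}(T_\ast,T^\ast)$ be the maximal-lifespan strong solution with $u(0)=w$ provided by Proposition~\ref{LocExProp}. Using the scale-invariance of \eqref{4NLS}, define $v_k(t,x):=N_k^{-n/2}u_k(N_k^{-4}t,\,N_k^{-1}x+y_k)$; then $v_k(0)\to w$ in $L^2$ and a change of variables yields $\|v_k\|_{Z(0,\,N_k^4T^k)}=\|u_k\|_{Z(0,T^k)}\to+\infty$, and symmetrically on the left. Suppose for contradiction that $\|u\|_{Z(0,T^\ast)}<+\infty$. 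If $T^\ast<+\infty$ this contradicts the maximality of $T^\ast$ via the local theory recalled at the beginning of the proof of Lemma~\ref{lemStab}. If $T^\ast=+\infty$, then Lemma~\ref{lemStab}, applied with $v=u$ on $(0,\infty)$, $e\equiv 0$ and $u_0=v_k(0)$, yields for $k$ large a uniform bound $\|v_k\|_{Z(0,\infty)}\le C$, contradicting $\|v_k\|_{Z(0,\,N_k^4T^k)}\to+\infty$. Hence $\|u\|_{Z(0,T^\ast)}=+\infty$, and similarly $\|u\|_{Z(T_\ast,0)}=+\infty$. Since $u\in S^0_{loc}(I)$ has finite $Z$-norm on every compact subinterval of $I$, the $L^p$-additivity of the $Z$-norm in the time variable forces $\|u\|_{Z(T_\ast,\tau)}=\|u\|_{Z(\tau,T^\ast)}=+\infty$ for every $\tau\in(T_\ast,T^\ast)$.

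\emph{Step 3: Precompactness and smoothness.} For any sequence $(\tau_k)\subset(T_\ast,T^\ast)$, apply Lemma~\ref{lem4} to the constant sequence $u_k\equiv u$ at times $t_k=\tau_k$: the hypotheses are met by Step~2, so along a subsequence there exist $(N_k,y_k)$ with $g_{(N_k,y_k)}^{-1}u(\tau_k)$ converging in $L^2$. This shows that the orbit $\{u(t):t\in I\}$ is precompact modulo the $g$-action, and a measurable selection argument (using a countable dense family in the limit set) yields $(N(t),y(t))$ such that $K$ in \eqref{CompactnessImage} is precompact in $L^2$. Smoothness of $N$ and $y$ follows by mollification: the map $(h,x_0)\mapsto g_{(h,x_0)}^{-1}f$ is continuous from $(0,\infty)\times\R^n$ into $L^2$ uniformly for $f$ in a compact set, so replacing $(N,y)$ by a local $C^\infty$ convolution only enlarges $K$ slightly and preserves its precompactness. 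The main obstacle is Step~2, where the stability lemma is crucial to transfer the two-sided blow-up of the $Z$-norm from the approximating sequence $v_k$ to the limiting solution $u$.
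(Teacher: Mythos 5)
Your proof takes essentially the same route as the paper: apply the Palais--Smale Lemma \ref{lem4} twice, once to extract a critical datum with $M(w)=M_{max}$ and once along the nonlinear orbit, with the stability Lemma \ref{lemStab} transferring two-sided $Z$-blow-up from the approximating sequence to the limiting solution.

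One caveat on the mollification sketch at the end of Step 3: mollifying a raw measurable selection $(N,y)$ at a fixed time-scale does not automatically preserve precompactness of $K$, since before any further work $N$ could a priori oscillate over unbounded ratios on arbitrarily small time intervals, so the mollified $\tilde N$ need not satisfy $\tilde N(t)\simeq N(t)$. The paper first proves local constancy of $(N,y)$ in Lemma \ref{lem5} — this is what pins down the correct mollification scale $\simeq N(t)^{-4}$ — and only then constructs a piecewise-affine and mollified replacement in Corollary \ref{SmoothNy}. Your Step 3 thus silently relies on that later local-constancy result. Since the paper's own proof of this corollary likewise defers smoothness to Corollary \ref{SmoothNy}, this is a structural point rather than a substantive error, but the sentence ``replacing $(N,y)$ by a local $C^\infty$ convolution only enlarges $K$ slightly'' needs the local constancy input to be justified.
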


\begin{proof}[Proof of Lemma \ref{lem4}] We proceed with similar arguments to the ones developed in Tao, Visan and Zhang \cite{TaoVisZha}, using the linear decomposition \eqref{le:linear-profile} and the stability theory in Lemma \ref{lemStab}.

Using the time-translation symmetry of \eqref{4NLS}, we may set $t_k=0$ for all $k\geq 1$.  Thus,
\begin{equation}\label{blow up in two}
\lim_{k\to +\infty} \|u_k\|_{Z(-T_k, 0)} = \lim_{k\to +\infty} \|u_k\|_{Z(0,T^k)} = +\infty.
\end{equation}
Applying Lemma~\ref{le:linear-profile} to the sequence $u_k(0)$ (which is bounded in $L^2(\R^n)$) and passing to a subsequence if necessary, we obtain a decomposition
$$
e^{it\Delta^2}u_k(0) = \sum_{1\le j\le J} \tau_{(h_k^j,t_k^j,y_k^j)}e^{it\Delta^2}\phi^j+ e^{it\Delta^2}w_k^J
$$
with the prescribed properties. By passing to subsequences for each $j$ and by using a diagonalisation argument, we may assume that, for each $j$,
$$\lim_{k\to\infty}(h_k^j)^4t_k^j=T^j\in [-\infty,\infty].$$ If $T^j$ is finite, we may refine the profile and assume $T^j=0$ and $t_k^j\equiv 0$ for all $k$. Otherwise, there holds that $\lim_{k\to +\infty} (h_k^j)^4t_k^j=\pm \infty.$ For every $j$, we define the nonlinear profiles, $v^j: I^j\times \R^n\to\C$, as follows.
\begin{itemize}
\item If $t_k^j\equiv 0$, we define $v^j$ to be the maximal lifespan solution of \eqref{4NLS} with initial data $v^j(0)=\phi^j$.

\item If $(h_k^j)^4t_k^j\to +\infty$, we define $v^j$ to be the maximal-lifespan solution of \eqref{4NLS} which scatters forward in time to $e^{it\Delta^2} \phi^j$ in the sense that
\begin{equation}\label{Scat2}
\Vert v^j(t)-e^{it\Delta^2} \phi^j\Vert_{L^2}\to 0
\end{equation}
as $t\to +\infty$.

\item If $(h_k^j)^4t_k^j\to -\infty$, we define $v^j$ to be the maximal-lifespan solution of \eqref{4NLS} which scatters backward in time to $e^{it\Delta^2} \phi^j$ in the sense that \eqref{Scat2} holds as $t\to -\infty$.
\end{itemize} Then for each $j,k\ge 1$, we define $v_k^j: I_k^j\times \R^n\to\C$ by
$$v_k^j :=\tau_{(h_k^j,t_k^j,y_k^j)}v^j.$$
where $I_k^j:= \{t: (h_k^j)^{4}(t-t_k^j)\in I^j\}$. Each $v_k^j$ is a nonlinear solution to \eqref{4NLS} with initial data $g_{(h_k^j,y_k^j)}v^j(-(h_k^j)^4t_k^j)$.
By \eqref{eq:L2-almost-ortho}, we have
\begin{equation}\label{mass decoupling}
\sum_{j=1}^{\infty} M(\phi^j)\leq  M(u_k(0))\leq M_{max}.
\end{equation}

{\bf Case I.}  Assume
$$
\sup_{j\geq 1} M(\phi^j)<M_{max}-\eps
$$ for some $\eps >0$. Then $v_k^j$ are defined globally and obey $M(v_k^j)\le M_{max}-\eps$ and, since $\Lambda$ is sublinear on $[0,M_{max}-\varepsilon]$, $$\|v_k^j\|_{Z(\R)}^\frac{2(n+4}{n}\lesssim \Lambda(M(\phi^))\lesssim_\varepsilon M(\phi^j).$$ We define an approximate solution
$$u_k^J=\sum_{j=1}^J v_k^j(t)+e^{it\Delta^2} w_k^J.$$
First since the parameters $(h_k^j,x_k^j,t_k^j)$ are orthogonal, and by \eqref{eq:err},
\begin{equation*}
\begin{split}
\lim_{J\to +\infty}\lim_{k\to +\infty} \|u_k^J\|^\frac{2(n+4)}{n}_{Z} &\le \lim_{J\to +\infty}\lim_{k\to +\infty} \|\sum_{j=1}^J v_k^j\|^\frac{2(n+4)}{n}_{Z}\le \lim_{J\to +\infty}\lim_{k\to +\infty}\sum_{j=1}^J\| v_k^j\|^\frac{2(n+4)}{n}_{Z}\\
&\le \sum_{j\to +\infty} M(\phi^j)<+\infty.
\end{split}
\end{equation*}
Secondly, the smallness at the initial time holds by definition since
$$\lim_{k\to +\infty} \|u_k^J(0)-u_k(0)\|_{L^2}=0.$$
Lastly, the smallness of the error term holds by \eqref{eq:strichartz-ortho} and \eqref{eq:err},
$$\lim_{J\to +\infty} \lim_{k\to +\infty} \|(i\partial_t+\Delta^2)u_k^J-F(u_k^J)\|_{N}=0.
$$
Now we apply the stability lemma \ref{lemStab} to conclude that $\|u_k\|_{Z}<\infty$, which leads to a contradiction to \eqref{blow up in two}.

{\bf Case II.}  Assume
$$
\sup_{j\geq 1} M(\phi^j) =M_{max}.
$$
In view of \eqref{eq:L2-almost-ortho}, there is only one profile. Thus the decomposition is reduced to
$$e^{it\Delta^2} u_k(0)=g_{(h_k,y_k)} e^{ih_k^4(t-t_k)\Delta^2} \phi+ w_k,$$
where either $t_k\equiv 0$ or $h_k^4t_k\to \pm \infty.$
We then get that
$$M(g_{(h_k,y_k)}^{-1}u_k(0)-e^{-ih_k^4t_k\Delta}\phi)\to 0,$$
and if $t_k\equiv 0$, then $u_k(0)$ converges to $\phi$ in $L^2$ and Lemma \ref{lem4} is proven. If $h_k^4t_k\to\pm \infty$, we may assume that $h_k^4t_k\to -\infty$. By time translation and monotone convergence,
$$\lim_{k\to +\infty} \|e^{i(t-h_k^4t_k)\Delta^2} \phi\|_{Z(0,\infty)}=0.$$
This yields
$$\lim_{k\to +\infty} \|g_{(h_k,y_k)}e^{ih_k^4(t-t_k)\Delta^2} \phi\|_{Z(0,\infty)}=0,$$
which, in turn, gives
$$\lim_{k\to +\infty} \|e^{it\Delta^2}u_k(0)\|_{Z(0,\infty)}=0.$$
By Lemma \ref{lemStab} again, $\lim_{k\to +\infty} \|u_k\|_{Z(0,\infty)}=0,$ a contradiction to \eqref{blow up in two}. Hence the proof of Lemma \ref{lem4} is complete.
\end{proof}

We will sketch the proof of Corollary \ref{le:minimal-mass-blow-up-solution}.
\begin{proof}[Proof of Corollary \ref{le:minimal-mass-blow-up-solution}]
By the definition of $M_{max}$, we can find a sequence $u_k: I_k \times \R^n \to \C$ of solutions with $M(u_k) \leq M_{max}$ and $\lim_{k \to +\infty} \|u_k\|_{Z(I_k)} = +\infty$.  Without loss of generality, we may take all $u_k$ to have maximal lifespan. Let $t_k\in I_k$ be such that $\|u_k\|_{Z(t_k,\sup I_k)}= \|u_k\|_{Z(\inf I_k t_k)}$.  Then,
\begin{equation}\label{eq:hyp-1}
\lim_{k\to +\infty} \|u_k\|_{Z(t_k,\sup I_k)}= \|u_k\|_{Z(\inf I_k t_k)}=\infty.
\end{equation}
Using the time-translation symmetry, we may take all $t_k=0$. Applying Lemma \ref{lem4}, and passing to a subsequence if necessary, we can locate $u_0 \in L^2$ such that $u_k(0)$ converge in $L^2$ modulo symmetries to $u_0$, that is, there exist group elements $g_k \in G$ such that $g_k u_k(0)$ converge strongly in $L^2$ to
$u_0$. We may take the $g_k$ to all be the identity, and thus $u_k(0)$ converges strongly in $L^2$ to $u_0$. In particular this implies $M(u_0) \leq M_{max}$.

Let $u: I \times \R^n \to \C$ be the maximal-lifespan solution with initial data $u(0) = u_0$ given by the local theory. We claim that $u$ blows up both forward and backward in time. Indeed, if $u$ fails to blow up forward in time, then by local theory, we have $[0,+\infty) \subset I$ and $\|u\|_{Z(0,\infty)} < \infty$.  By the stability lemma, this implies that, for sufficiently large $k$ that $[0,+\infty) \subset I_k$ and
$$ \limsup_{k \to \infty} \| u_k\|_{Z(0,+\infty)} < + \infty,$$ contradicting \eqref{eq:hyp-1}. Likewise for the case where $u$ blows up backward in time.  By the definition of $M_{max}$ this forces $M(u_0) \ge M_{max}$, and hence $M(u_0)$ must be exactly $M_{max}$.

It remains to show that a solution $u$ which blows up both forward and backward in time is precompact in $L^2$ modulo symmetries. Consider an arbitrary sequence $G u(t'_k)$ in $\{ G u(t): t \in I \}$, where $G$ denotes the symmetry group generated by spatial translation and scaling. Now, since $u$ blows up both forward and backward in time, we have $$ \|u\|_{Z(t'_k,\sup I)} = \|u\|_{Z(\inf I,t'_k)}= +\infty.$$
Applying Lemma \ref{lem4}, we see that $G u(t'_k)$ does have a convergent sequence in $L^2$.  Thus, the orbit $\{ G u(t): t \in I \}$ is precompact in $L^2$. Finding a section $g$ such that \eqref{CompactnessImage} holds is now easy.
\end{proof}

\subsection{Reduction to three scenarios.}
In this subsection, we finish the proof of Theorem \ref{3ScenariosThm} and provide more refined characterizations of the minimal-Mass blow-up solutions in Corollary \ref{le:minimal-mass-blow-up-solution} by analyzing the frequency parameter $N(t)$.

We first collect some facts on the scaling parameter $N(t)$ and the spatial translation function $y(t)$ by following the arguments developed by Killip, Tao and Visan \cite{KilTaoVis}. For $\delta>0$, and $t$, we define the interval $J_\delta(t)$ by
\begin{equation}\label{DefJDeltT}
 J_\delta(t)=(t-\delta N(t)^{-4},t+\delta N(t)^{-4}).
\end{equation}
\begin{lemma}[Local constancy of $N(t),y(t)$]\label{lem5} Let $u\in S_{loc}(I)$ be a nonzero maximal lifespan strong solution of \eqref{4NLS}, and $g(t)$ be such that $K$ as in \eqref{CompactnessImage} is precompact. Then there exists $\delta>0$ such that for any time $t_0\in I$, the following holds true:
\begin{itemize}
\item For $J_\delta(t_0)$ as in \eqref{DefJDeltT}, $J_\delta(t_0)\subset I$, and
 \begin{equation}\label{JHasNonntrivialZnorm}
\Vert u\Vert_{Z(J_\delta(t_0))}\simeq_u 1.
\end{equation}
\item $N(t)\simeq_u N(t_0)$, $\vert N(t_0)(y(t_0)-y(t))\vert\lesssim_u 1$ for any $t\in J_\delta(t_0)$.
\end{itemize}
In particular, if $N$ is bounded, then $I=\mathbb{R}$, and if $u$ blows up at time $T$, then $N(t)\ge \delta^{\frac{1}{4}}(T-t)^{-\frac{1}{4}}$.
\end{lemma}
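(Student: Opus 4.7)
The plan is to combine uniform local well-posedness on the precompact set $K$ with an orthogonality/compactness argument to transfer control between different times $t, t_0\in I$, using the rescaling \eqref{DefTau} as the main dictionary.

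First I would establish uniform local well-posedness on $K$. Since $K$ is precompact in $L^2$, so is its closure $\overline{K}$, and by Strichartz \eqref{StricEst} together with a small-data Picard iteration as in Proposition \ref{LocExProp}, there exist $\delta_0, C>0$ (depending only on $K$) such that for every $v_0\in \overline{K}$ the strong solution $\tilde v$ of \eqref{4NLS} with $\tilde v(0)=v_0$ exists on $[-\delta_0,\delta_0]$ with $\|\tilde v\|_{S^0([-\delta_0,\delta_0])}\le C$. Moreover, mass conservation and $M(u)>0$ imply that $0\notin\overline{K}$, and the functional $v_0\mapsto \|\tilde v\|_{Z([-\delta_0,\delta_0])}$ is continuous on $\overline{K}$ (by the stability Lemma \ref{lemStab}) and strictly positive, hence bounded below by some $c>0$. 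Applying this with $v_0=g(t_0)^{-1}u(t_0)\in K$, uniqueness together with the invariance of the equation under \eqref{DefTau} yields $\tilde v(s)=g(t_0)^{-1}u(t_0+sN(t_0)^{-4})$, so $J_{\delta_0}(t_0)\subset I$ and $\|u\|_{Z(J_{\delta_0}(t_0))}=\|\tilde v\|_{Z([-\delta_0,\delta_0])}\simeq_u 1$ by scale-invariance of the $Z$-norm. This already yields the final consequences: if $N$ is bounded above by $M$, then $J_{\delta_0}(t)\supset(t-\delta_0 M^{-4},t+\delta_0 M^{-4})$ for every $t\in I$, forcing $I=\mathbb{R}$; and if $T=\sup I<\infty$, then the inclusion $J_{\delta_0}(t)\subset I$ forces $t+\delta_0 N(t)^{-4}\le T$, i.e. $N(t)\ge \delta_0^{1/4}(T-t)^{-1/4}$.

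For the local constancy of $N$ and $y$, I would argue by contradiction. Suppose there are sequences $(t_{0,k})$ in $I$ and $t_k\in J_{\delta_0}(t_{0,k})$ along which $N(t_k)/N(t_{0,k})+N(t_{0,k})/N(t_k)+|N(t_{0,k})(y(t_{0,k})-y(t_k))|\to\infty$. After extraction, precompactness of $K$ gives $v_{0,k}:=g(t_{0,k})^{-1}u(t_{0,k})\to v_\infty$ and $w_k:=g(t_k)^{-1}u(t_k)\to w_\infty$ in $L^2$, with $M(v_\infty)=M(w_\infty)=M(u)>0$. Setting $s_k=(t_k-t_{0,k})N(t_{0,k})^4\in[-\delta_0,\delta_0]$ and invoking continuous dependence via Lemma \ref{lemStab}, we may extract a subsequence so that $\tilde v_k(s_k)\to \tilde v_\infty(s_\infty)$ strongly in $L^2$, where $\tilde v_k$ is the solution with data $v_{0,k}$. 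By construction $w_k = g(t_k)^{-1}g(t_{0,k})\,\tilde v_k(s_k)$, so $w_\infty$ would be a strong $L^2$-limit of rescalings/translations of an $L^2$-convergent sequence $\tilde v_k(s_k)\to \tilde v_\infty(s_\infty)\ne 0$, with parameters $(h_k,x_k):=(N(t_{0,k})/N(t_k),\,N(t_k)(y(t_k)-y(t_{0,k})))$ satisfying $h_k+h_k^{-1}+|x_k|\to\infty$. This contradicts the elementary orthogonality fact that strong $L^2$ limits of such diverging rescalings/translations of a non-trivial sequence must vanish, concluding the proof.

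The main obstacle is the last step: one must rule out all three divergence modes (scale to $0$, scale to $\infty$, pure translation) simultaneously, and one must be careful that, even when the scale ratios $h_k$ stay bounded but the translation parameters diverge, the limits $\tilde v_\infty(s_\infty)$ and $w_\infty$ belong to $L^2$ which is translation-sensitive in the strong topology. The cleanest way is to test against a dense family of Schwartz functions and invoke dominated convergence, exploiting that diverging translations or rescalings of a single $L^2$ function converge weakly to zero, so that no mass can be transferred to a nonzero strong limit.
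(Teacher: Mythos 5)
Your proposal is correct and follows essentially the same approach as the paper: uniform local well-posedness plus compactness of $K$ gives the interval inclusion and the two-sided bound on $\|u\|_{Z(J_\delta(t_0))}$, and the local constancy of $(N,y)$ is obtained by contradiction from stability and the fact that diverging rescalings/translations of a non-degenerate $L^2$-convergent sequence converge weakly to zero. The only cosmetic difference is that the paper sends $\delta\to 0$ (so $s_k\to 0$ and the contradiction becomes $u(0)=0$), whereas you keep $\delta_0$ fixed and pass to a subsequence $s_k\to s_\infty$; for that variant you should take $\delta_0$ so that solutions from $\overline{K}$ exist on a slightly larger interval like $[-2\delta_0,2\delta_0]$, exactly as the paper does, so that $s_\infty$ lies in the interior.
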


\begin{proof}[Proof of Lemma \ref{lem5}] Since $K$ is precompact, using Strichartz estimates and local well-posedness theory,
we see that there exists $\delta>0$ such that for any $w\in K$,  the maximal nonlinear solution $W$ of \eqref{4NLS} with initial data $w$ is defined on $(-2\delta,2\delta)$ and satisfies
$ \Vert W\Vert_{Z(-2\delta,2\delta)}\lesssim 1.$
By rescaling, this gives the first claim concerning $J_\delta(t_0)$, and one inequality in \eqref{JHasNonntrivialZnorm}. On the other hand, compactness of $K$ implies that there exists $\eta$ such that for any $t$,
\begin{equation}\label{JHasNonntrivialZnormEqt2}
 \eta\le \Vert u\Vert_{Z(J_\delta(t))},
\end{equation}
which gives the second inequality in \eqref{JHasNonntrivialZnorm}. Now we prove the second claim by contradiction. If it fails for all $\delta>0$, by the definition of $J_\delta$ one can find $t_k,t'_k$ such that $s_k:=(t'_k-t_k)N(t_k)^4\to 0$ and
\begin{align*}
& \lim_{k\to +\infty} \frac {N(t_k)}{N(t'_k)}+\frac {N(t'_k)}{N(t_k)} =+\infty, \text{ or } \lim_{k\to +\infty}N(t_k)\left\vert y(t_k)-y(t'_k)\right\vert = +\infty.
\end{align*}
We define the normalization of $u$ at $t_k$, $u^{[t_k]}$ as follows,
\begin{align*}u^{[t_k]}(t):&= \tau_{(N(t_k),t_k,y(t_k))}^{-1}u(t)\\
&=N(t_k)^{-n/2}u\bigl((N(t_k)^{-4}t+t_k,N(t_k)^{-1}x+y(t_k)\bigr),\end{align*}
where $\tau$ is defined in \eqref{DefTau} and $t\in I^k:=\{s: N(t_k)^{-4}t+t_k\in I\}$. It is clear that $0\in I^k$; also note that
$$u^{[t_k]}(0)=g^{-1}_{(N(t_k),y(t_k))}[u(t_k)].$$ Hence by passing to a subsequence if necessary, from the assumption of precompactness, we can find $u_0\in L^2$ such that $u^{[t_k]}(0)$ converges strongly to a nonzero $u_0$ in $L^2$. From the local theory, let $u$ be the maximal lifespan solution with initial data $u_0$ with lifespan $I\ni 0$. By the stability lemma \ref{lemStab}, $u^{[t_k]}$ converges uniformly to $u$ on every compact interval $J\subset I$ in $C_t^0L^2_x\cap Z(J\times \R^n)$. Note that the corresponding frequency parameter and spatial centers for $u^{[t_k]}$ are in form of
$$N^{[t_k]}(s):=\frac {N(t_k+sN(t_k)^{-4})}{N(t_k)}, \quad y^{[t_k]}(s):=N(t_k)\bigl(y(t_k+sN(t_k)^{-4})-y(t_k)\bigr).$$
Hence $N^{[t_k]}(s_k)$ converges either to zero or infinity,  and $y^{[t_k]}(s)$ converges to infinity. Each possibility yields that $u^{[t_k]}(s_k)$ weakly converges to zero.

On the other hand, since $s_k$ converges to zero and $u^{[t_k]}$ converges to $u$ in $C_t^0L^2_x\cap Z(J\times \R^n)$, we see that $u^{[t_k]}(s_k)$ converges to $u(0)$ in $L^2$. Then $u(0)=0$ and hence $u$ is zero, which leads to a contradiction. Therefore the proof of Lemma \ref{lem5} is complete.
\end{proof}

\begin{corollary}[Smoothness of $g(t)=g_{(N(t),y(t))}$]\label{SmoothNy}
A consequence of the previous lemma is that the symmetry element $g$ as given by Theorem \ref{3ScenariosThm} is smooth and satisfies \eqref{VariationOfH}.
\end{corollary}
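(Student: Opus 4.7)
The plan is to upgrade $(N(t),y(t))$---which come from abstract compactness and are only measurable a priori---to smooth functions satisfying \eqref{VariationOfH}. The strategy has three steps: discretize time at the natural scale $\delta N^{-4}$, interpolate smoothly between the discrete values, and verify that the replacement preserves precompactness. The conceptual reason this works is that Lemma \ref{lem5} shows any two choices of scaling parameters which are pointwise comparable (in the sense $\tilde{N}/N\simeq 1$ and $N|\tilde{y}-y|\lesssim 1$) produce precompact orbits equivalent up to bounded symmetries.

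First I would fix $t^\ast\in I$ and iteratively define times $\ldots<t_{-1}<t_0=t^\ast<t_1<\ldots$ by $t_{k+1}=t_k+\tfrac{\delta}{2}N(t_k)^{-4}$, with $\delta$ as in Lemma \ref{lem5}. By that lemma, on each $[t_k,t_{k+1}]\subset J_\delta(t_k)$ we have $N(t)\simeq_u N(t_k)$ and $N(t_k)|y(t)-y(t_k)|\lesssim_u 1$; in particular $N(t_{k+1})\simeq_u N(t_k)$, so adjacent step lengths are comparable. If $I$ has a finite endpoint, the last assertion of Lemma \ref{lem5} forces $N\to\infty$ there, so the sequence exhausts $I$.

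Next I would fix a smooth partition of unity $\{\chi_k\}$ subordinate to $\{(t_{k-1},t_{k+1})\}$ with $|\chi_k'|\lesssim (t_{k+1}-t_k)^{-1}\simeq_u N(t_k)^4$, and set
\[
\tilde{N}(t)=\sum_k \chi_k(t)N(t_k), \qquad \tilde{y}(t)=\sum_k \chi_k(t)y(t_k).
\]
Since only $O(1)$ terms contribute at any $t$, we get $\tilde{N}(t)\simeq_u N(t)$ and $N(t)|\tilde{y}(t)-y(t)|\lesssim_u 1$. Using $\sum_k\chi_k'=0$ to recenter the sums around $N(t_k)$ and $y(t_k)$, and the comparabilities just noted, we obtain
\[
|\dot{\tilde{N}}(t)|\lesssim_u N(t_k)^4\cdot N(t_k)\simeq_u \tilde{N}(t)^5, \qquad |\dot{\tilde{y}}(t)|\lesssim_u N(t_k)^4\cdot N(t_k)^{-1}\simeq_u \tilde{N}(t)^3,
\]
which is exactly \eqref{VariationOfH} for $(\tilde{N},\tilde{y})$.

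Finally, I would verify precompactness of the new orbit by writing $g_{(\tilde{N}(t),\tilde{y}(t))}^{-1}u(t)=h(t)v(t)$, where $v(t)=g_{(N(t),y(t))}^{-1}u(t)$ is precompact by hypothesis and $h(t)=g_{(\tilde{N}(t),\tilde{y}(t))}^{-1}g_{(N(t),y(t))}$. A direct computation using the definition \eqref{DefG} gives $h(t)=g_{(N(t)/\tilde{N}(t),\,-\tilde{N}(t)(\tilde{y}(t)-y(t)))}$, whose parameters lie in a fixed compact subset of $(0,\infty)\times\mathbb{R}^n$. Since the scaling-translation action on $L^2$ is jointly strongly continuous, the image of the precompact set $\{v(t)\}$ under the uniformly bounded family $\{h(t)\}$ is again precompact. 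The only real work is the bookkeeping in the discretization step---ensuring adjacent step lengths are comparable and the construction covers $I$---but this follows directly from Lemma \ref{lem5}.
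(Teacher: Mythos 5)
Your proof is correct and follows essentially the same three-step scheme as the paper: discretize time at the natural scale $\delta N^{-4}$, interpolate smoothly, and verify that the resulting $g_{(\tilde{N},\tilde{y})}^{-1}g_{(N,y)}$ stays in a compact set so precompactness is preserved. The only difference is in the interpolation step: the paper first builds piecewise-affine $\tilde{N},\tilde{y}$ on the $[t_k,t_{k+1}]$ and then mollifies at each corner by convolving with a bump at scale $\mu^{-1}\simeq N(t_k)^{-4}$, whereas you interpolate in one shot via a partition of unity $\{\chi_k\}$ with $|\chi_k'|\lesssim N(t_k)^4$ and use $\sum_k\chi_k'=0$ to recenter. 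Your version is marginally cleaner (one step instead of two) and delivers the identical estimates \eqref{VariationOfH}, resting on the same comparability facts from Lemma \ref{lem5}.
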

\begin{proof}
Let $\nu<\delta$, where $\delta$ is given in Lemma \ref{lem5}, $\nu$ sufficiently small so that
$$J_\nu(t_1)\subset J_\delta(t_0)\hskip.1cm\hbox{for all}\hskip.1cm t_1,t_0\hskip.1cm \hbox{such that}\hskip.1cm J_\nu(t_0)\cap J_\nu(t_1)\ne\emptyset.$$

Let $(t_k)_k$ be an increasing sequence of times such that the $J_\nu(t_k)$ cover the maximal interval of
existence of $u$ and such that the $J_{\nu/2}(t_k)$ are disjoint.
Define two intermediate rescaling functions, $\tilde{N}$ and $\tilde{y}$ as follows:
if $t_k\le t\le t_{k+1}$, then
\begin{equation}\label{DefinitionOfNewRescalingFunctions}
\begin{split}
&\tilde{N}(t)=N(t_k)+(t-t_k)\frac{N(t_{k+1})-N(t_k)}{t_{k+1}-t_k},\hskip.1cm\hbox{and}\hskip.1cm\\
&\tilde{y}(t)=y(t_k)+(t-t_k)\frac{y(t_{k+1})-y(t_k)}{t_{k+1}-t_k}.\\
\end{split}
\end{equation}
$\tilde{N}$ and $\tilde{y}$ are defined for all time $t\in I$ and are affine on each interval $[t_k,t_{k+1}]$. In particular, we get using Lemma $3.5$ that
\begin{equation}\label{NtildeAdmissible}
\begin{split}
\vert\ln\frac{\tilde{N}(t)}{N(t)}\vert
&\le \left\vert\ln\left(2\max\left(\frac{N(t_{k+1})}{N(t)},\frac{N(t_k)}{N(t)}\right)\right)\right\vert\lesssim_u1
\end{split}
\end{equation}
uniformly in $k$. Similarly, since $t\in J_\delta(t_k)$ and $J_\delta(t_{k+1})$, still by Lemma \ref{lem5}, we get that
\begin{equation}\label{YtildeAdmissible}
\begin{split}
\left\vert N(t)(y(t)-\tilde{y}(t))\right\vert
&\lesssim_u1
\end{split}
\end{equation} uniformly in $k$. Hence, we get that
$g_{(\tilde{N},\tilde{y})}^{-1}g_{(N,y)}$ remain in a compact set, and hence we can replace $N$ and $y$ by $\tilde{N}$ and $\tilde{y}$, in the sense that the set
$$\tilde{K}=\{g_{(\tilde{N}(t),\tilde{y}(t))}^{-1}u(t),t\in I\}$$
is still precompact in $L^2$. Now, $\tilde{y}$ and $\tilde{N}$ are piecewise affine functions. In particular, since $t_{k+1}-t_k\ge \nu/2N(t_k)^{-4}$, we see that, for $t_k<t<t_{k+1}$,
\begin{equation}\label{DerAffine1}
\begin{split}
\vert \tilde{N}^{-5}(t)\partial_t\tilde{N}(t)\vert&=\left\vert \tilde{N}^{-5}(t)\frac{N(t_{k+1})-N(t_k)}{t_{k+1}-t_k}\right\vert\\
&\le \frac{2}{\nu}\left(\frac{\tilde{N}(t)}{N(t)}\frac{N(t)}{N(t_k)}\right)^{-5}\left\vert\frac{N(t_{k+1})-N(t_k)}{N(t_k)}\right\vert
\lesssim_u 1,
\end{split}
\end{equation}
uniformly in $k$. Similarly, we get
\begin{equation}\label{DerAffine2}
\begin{split}
\left\vert \tilde{N}^{-3}(t)\partial_t\tilde{y}(t)\right\vert &=\left\vert \tilde{N}^{-3}(t)\frac{y(t_{k+1})-y(t_k)}{t_{k+1}-t_k}\right\vert\\
&\le \frac{2}{\nu}\left(\frac{N(t_k)}{N(t)}\right)^{-3}N(t_k)\left\vert y(t_{k+1})-y(t_k)\right\vert\lesssim_u1
\end{split}
\end{equation}
uniformly in $k$. Hence, \eqref{VariationOfH} hold for $\tilde{N}$ and $\tilde{y}$ except at $t_k$. Then it suffices to smooth out $\tilde{N}$ and $\tilde{y}$ to get smooth functions. To do this, let $\chi$ be a smooth nonnegative bump function supported in $[-1,1]$ and satisfying $\int\chi=1$, and $\chi_k=\mu\chi(\mu x)$ for $10/\mu=\nu N(t_k)^{-4}$. Then on each interval $J_\nu(t_k)$, we replace $\tilde{N}$ and $\tilde{y}$ by $N_k=\tilde{N}\ast\chi_k$ and $y_k=\tilde{y}\ast\chi_k$. This only modifies $\tilde{N}$ and $\tilde{y}$ in $(t_k-\mu,t_k+\mu)$, so that defining $N_u$ and $y_u$ by $N_u(t)=N_k(t)$ and $y_u(t)=y_k(t)$ if $t\in J_\nu(t_k)$, we get global smooth functions as in Corollary \ref{SmoothNy}.
\end{proof}

By a similar argument as in Killip, Tao and Visan \cite{KilTaoVis}, see also Killip and Visan \cite{KilVis2}, we have the following control on the scattering size via the frequency parameter, $N(t)$.
\begin{lemma}[Control of Strichartz norms via $N(t)$]\label{le:Strichartz-Via-N} Let $u$ be a nonzero solution to \eqref{4NLS} with lifespan $I$, which is precompact with parameters $N(t),y(t)$. Then for any compact interval $J\subset I$,
\begin{equation}\label{BoundOnNonlinearTermByH}
 \int_{J}N(s)^{4}ds\lesssim_u \Vert u\Vert_{Z(J)}^\frac{2(n+4)}{n}\lesssim_u 1+\int_{J}N(s)^{4}ds.
\end{equation}
By Strichartz estimates and conservation of Mass, this implies that for all S-admissible pairs $(p,q)$,
$$\Vert \vert\nabla\vert^\frac{2}{p}u\Vert_{L^p(I,L^q)}^p\lesssim_u 1+\int_IN(t)^{4}dt.$$
\end{lemma}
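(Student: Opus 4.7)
The plan is to subdivide the compact interval $J\subset I$ according to the local scale $N(t)^{-4}$ supplied by Lemma~\ref{lem5} and sum contributions piecewise. The starting observation is that on every $J_\delta(t_0)$, with $\delta$ the constant from Lemma~\ref{lem5}, one has $N(s)\simeq_u N(t_0)$ and $|J_\delta(t_0)|\simeq N(t_0)^{-4}$, whence $\int_{J_\delta(t_0)}N(s)^4\,ds\simeq_u 1$, while \eqref{JHasNonntrivialZnorm} gives $\|u\|_{Z(J_\delta(t_0))}^{2(n+4)/n}\simeq_u 1$. Thus a single ``unit'' of either quantity corresponds to a time window of the form $J_\delta(t_0)$.

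For the upper bound $\|u\|_{Z(J)}^{2(n+4)/n}\lesssim_u 1+\int_J N^4$, I would partition $J=[a,b]$ greedily by $a_0=a$ and $a_{k+1}=\min(b,\,a_k+\delta N(a_k)^{-4})$, producing $K+1$ subintervals $[a_k,a_{k+1}]\subset J_\delta(a_k)$. On each piece we then have $\|u\|_{Z([a_k,a_{k+1}])}^{2(n+4)/n}\lesssim_u 1$, while each of the first $K$ full pieces contributes $\simeq_u 1$ to $\int N^4$. By additivity of the $L^{2(n+4)/n}_{t,x}$-integral over disjoint time intervals,
\[
\|u\|_{Z(J)}^{2(n+4)/n}=\sum_{k=0}^{K}\|u\|_{Z([a_k,a_{k+1}])}^{2(n+4)/n}\lesssim_u K+1\lesssim_u 1+\int_J N(s)^4\,ds.
\]

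For the lower bound $\int_J N^4\lesssim_u \|u\|_{Z(J)}^{2(n+4)/n}$, I would use the rescaled representation $u(t,x)=N(t)^{n/2}\,v\bigl(t,N(t)(x-y(t))\bigr)$ with $v(t)\in K$, and a direct change of variables in the spatial integral to obtain
\[
\|u\|_{Z(J)}^{2(n+4)/n}=\int_J N(t)^4\,\|v(t)\|_{L^{2(n+4)/n}(\R^n)}^{2(n+4)/n}\,dt.
\]
The inequality will then follow once I establish the pointwise lower bound $\|v(t)\|_{L^{2(n+4)/n}}^{2(n+4)/n}\gtrsim_u 1$ uniformly for $v\in K$. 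Here the precompactness of $K$ in $L^2$ is genuinely used: by uniform tightness there exists $R>0$ with $\|v\|_{L^2(|x|\le R)}^2\ge M(u)/2$ for all $v\in K$, and H\"older's inequality in $B_R$ gives $\|v\|_{L^2(B_R)}^2\le |B_R|^{4/(n+4)}\|v\|_{L^{2(n+4)/n}(B_R)}^2$, hence $\|v\|_{L^{2(n+4)/n}}^2\gtrsim_u R^{-4n/(n+4)}\gtrsim_u 1$ as desired.

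The Strichartz part is then immediate from \eqref{StricEst} applied on each $[a_k,a_{k+1}]$ of the first partition: the nonlinear term obeys $\|F(u)\|_{N([a_k,a_{k+1}])}\lesssim\|u\|_{Z([a_k,a_{k+1}])}^{(n+8)/n}\lesssim_u 1$, and conservation of mass gives $\|u(a_k)\|_{L^2}=M(u)^{1/2}$, so $\bigl\||\nabla|^{2/p}u\bigr\|_{L^p(L^q)([a_k,a_{k+1}])}^p\lesssim_u 1$ for every $S$-admissible pair $(p,q)$; summing over the $K+1\lesssim_u 1+\int_J N^4$ pieces yields the claim. The main technical obstacle in this plan is the pointwise lower bound on $\|v(t)\|_{L^{2(n+4)/n}}$ in the third paragraph, which is really the point at which the precompact-orbit hypothesis is used (rather than just Strichartz control on single windows).
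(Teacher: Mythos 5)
The paper states this lemma without proof, deferring to Killip--Tao--Visan and Killip--Visan; your argument is a correct and complete rendering of exactly that standard strategy (greedy partition of $J$ into intervals of length $\delta N(t)^{-4}$ for the upper bound and the Strichartz conclusion, plus a direct rescaling-and-tightness argument for the lower bound). No gaps: the H\"older exponent $|B_R|^{1-2/p}=|B_R|^{4/(n+4)}$ is right, the change of variables gives $\|u(t)\|_{L^{2(n+4)/n}_x}^{2(n+4)/n}=N(t)^4\|v(t)\|_{L^{2(n+4)/n}}^{2(n+4)/n}$ as claimed, and the $p$-th power of $L^p_t L^q_x$ is additive over disjoint time intervals, so the summation step in the Strichartz part is valid.
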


We also need the following lemma.
\begin{lemma}\label{lem6} Let $u_k$ be a sequence of maximal lifespan solutions defined on $I_k$ satisfying that
$$\tilde{K}=\{g_k^{-1}(t)u_k(t)=g_{(N_k(t),y_k(t))}^{-1}u_k(t):t\in I_k,k\ge 0\}$$
is precompact in $L^2$ for some rescaling functions $g_k$, and suppose that
$$u_k\to W\hskip.1cm\hbox{in}\hskip.1cm C^0_{loc}(I,L^2)\cap L^\frac{2(n+4)}{n}(I,L^\frac{2(n+4)}{n}),$$ where
$W\in S(I)$ is a solution to \eqref{4NLS}
such that $M(W)=M_{max}$ and such that $\{G(t)^{-1}W(t),t\in I\}$ is precompact in $L^2$ for some rescaling function $G(t)=g_{(N(t),Y(t))}$. Then for all $t \in I$,
\begin{equation}\label{ContinuityLemma}
\begin{split}
&0<\liminf_{k\to \infty} N_k(t)\lesssim_u N(t)\lesssim_u \limsup_{k\to +\infty} N_k(t)<+\infty,\hskip.1cm\hbox{and}\\
&\limsup_{k\to \infty} N(t)\left\vert y_k(t)-Y(t)\right\vert\lesssim_u 1.
\end{split}
\end{equation}
\end{lemma}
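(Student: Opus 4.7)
The plan is to fix an arbitrary $t\in I$ and extract the parameter bounds from two compactness statements: the hypothesis that $g_k^{-1}(t)u_k(t)\in \tilde K$, and the compactness of the orbit $K':=\{G^{-1}(s)W(s):s\in I\}\subset L^2$. The guiding principle is that a fixed nonzero $L^2$ function cannot be written in two very different ways as a rescaling-translation of elements drawn from two prescribed compact sets away from $0$.

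First, since $u_k\to W$ in $C^0_{loc}(I,L^2)$, for $k$ large we have $t\in I_k$ and $u_k(t)\to W(t)$ strongly in $L^2$, with $\|W(t)\|_{L^2}=\sqrt{M_{max}}>0$ by conservation of Mass. Along any subsequence of $k$, precompactness of $\tilde K$ lets us pass to a sub-subsequence so that $g_k^{-1}(t)u_k(t)\to \phi$ strongly in $L^2$; since each $g_k(t)$ is an $L^2$-isometry, this gives $u_k(t)=g_k(t)\phi+o_{L^2}(1)$ and hence $\|\phi\|_{L^2}=\sqrt{M_{max}}$. I would next argue that $N_k(t)$ stays bounded above and away from $0$, and that $y_k(t)$ stays bounded. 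Indeed, if $N_k(t)\to 0$ or $\infty$, or if $|y_k(t)|\to\infty$ while $N_k(t)$ stays bounded, then using the formula $g_{(h,y)}\phi(x)=h^{n/2}\phi(h(x-y))$ one checks by dominated convergence against Schwartz test functions that $g_k(t)\phi\rightharpoonup 0$ weakly in $L^2$, which combined with $u_k(t)\to W(t)$ forces $W(t)=0$, contradicting $M(W)=M_{max}$. Extracting once more, we may assume $N_k(t)\to N_*\in(0,\infty)$ and $y_k(t)\to y_*\in\R^n$, so that $W(t)=g_{(N_*,y_*)}\phi$.

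The main step is now to compare $(N_*,y_*)$ with $(N(t),Y(t))$. Setting $\psi(t):=G(t)^{-1}W(t)\in K'$, an explicit computation from the definition of $g$ yields $\psi(t)=g_{(h_t,z_t)}\phi$ where $h_t=N_*/N(t)$ and $z_t=N(t)(y_*-Y(t))$. The essential ingredient, which I expect to be the main obstacle and which I would prove by the same weak-limit argument, is the following abstract fact: if $\phi$ and $\psi$ both belong to a given precompact subset of $L^2$ consisting of functions whose $L^2$ norms are bounded below by a positive constant, and if $\psi=g_{(h,z)}\phi$, then $|\log h|+|z|\le C$ for a constant $C$ depending only on the compact set. (Otherwise, along a violating sequence one obtains $g_{(h_j,z_j)}\phi_j\rightharpoonup 0$, contradicting convergence of the $\psi_j$ to a nonzero limit.) Applying this with $\phi\in\overline{\tilde K}$ and $\psi(t)\in\overline{K'}$, both compact and avoiding $0$ thanks to the common $L^2$-norm $\sqrt{M_{max}}$ and with a constant depending only on $u$, I conclude $N_*\simeq_u N(t)$ and $N(t)|y_*-Y(t)|\lesssim_u 1$. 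Since these bounds hold for every subsequential limit of $(N_k(t),y_k(t))$, both chains in \eqref{ContinuityLemma} follow.
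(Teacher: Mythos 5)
Your argument is correct, and rests on the same underlying mechanism as the paper's proof: if $\phi$ ranges over a compact subset of $L^2$ bounded away from zero and $|\log h|+|z|\to\infty$, then $g_{(h,z)}\phi\rightharpoonup 0$, which is incompatible with strong convergence to a nonzero limit. What differs is the packaging. You fix $t$, extract a strong limit $\phi$ of $g_k^{-1}(t)u_k(t)$ along a subsequence, rule out degeneration of $(N_k(t),y_k(t))$ (since otherwise $u_k(t)\rightharpoonup 0$ while $u_k(t)\to W(t)\neq 0$), identify $W(t)=g_{(N_*,y_*)}\phi$ for a subsequential limit $(N_*,y_*)$, and then invoke an abstract compactness fact to bound the group element joining $\phi$ to $G(t)^{-1}W(t)$; crucially, the resulting constant is uniform in $t$ because it depends only on the closures of the two orbits and on the lower bound $\sqrt{M_{max}}$ for the $L^2$-norms, none of which involve $t$. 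The paper instead argues directly by contradiction, producing a double-indexed sequence $(s_p,k(p))$ with $k(p)$ chosen large enough that $u_{k(p)}(s_p)-W(s_p)$ has mass at most $M_{max}/8$; it then decomposes $G(s_p)^{-1}W(s_p)$ into a weakly vanishing main term (obtained from the orthogonality of the parameters applied to the compact orbit of the $u_k$) plus this small remainder, and so bounds the $L^2$-norm of the strong limit $\bar w$ of $G(s_p)^{-1}W(s_p)$ by $\sqrt{M_{max}/8}$, contradicting $M(\bar w)=M_{max}$. Your version isolates a clean, reusable compactness lemma and avoids explicit mass bookkeeping; the paper's direct contradiction dispenses with the preliminary extraction of convergent parameter subsequences. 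Both arguments are sound.
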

\begin{proof}[Proof of Lemma \ref{lem6}]  Let $(u_k)_k$ be a sequence of maximal lifespan solutions as above converging to $W\in S(I)$ with rescaling function $G$. Suppose that there exists a sequence of time $s_p\in I$, a subsequence $k'=k'(p)$ such that
\begin{equation}\label{AsymptoticOrthogonality}
\frac {N(s_p)}{N_{k'}(s_p)}+\frac {N_{k'}(s_p)}{N(s_p)}+N(s_p)\left\vert y_{k^\prime}(s_p)-Y(s_p)\right\vert>p.
\end{equation}
By assumption, $u_k\to W$ in $C(J,L^2)$ for any compact interval $s_p\in J\subset I$. Hence we can find an element $k\ge p$ such that $M(u_k(s_p)-W(s_p))\le M_{max}/8$ and that \eqref{AsymptoticOrthogonality} holds true with $k$ instead of $k^\prime$. This defines a sequence $k(p)$ such that for any $p$,
$$u_{k(p)}(s_p)=W(s_p)+r_p$$
with $M(r_p)\le M_{max}/8$. By passing to a subsequence if necessary, we may assume that $G(s_p)^{-1}W(s_p)\to \bar{w}$ in $L^2$. In particular $M(\bar{w})=M_{max}$. However
$$G(s_p)^{-1}W(s_p)=\left[G(s_p)^{-1}g_{k(p)}(s_p)\right]g_{k(p)}(s_p)^{-1}u_{k(p)}(s_p)-G(s_p)r_p.$$
Now by assumption, $g_{k(p)}(s_p)^{-1}u_{k(p)}(s_p)$ remains in a compact subset, and we have that $G(s_p)^{-1}g_{k(p)}(s_p)$ converges weakly to $0$.
As a consequence any weak limit $w^\prime$ of $G(s_p)^{-1}W(s_p)$ satisfies $M(w^\prime)\le M( r_p)\le M_{max}/8$. This contradicts $M(\bar{w})=M_{max}$. This finishes the proof of \eqref{ContinuityLemma} and of the lemma.
\end{proof}
Now we present a combinatorial argument to prove Theorem \ref{3ScenariosThm}. We refer to Killip, Tao and Visan \cite{KilTaoVis} for the original treatment in the context of the $L^2$-critical Schr\"odinger equation.
\begin{proof}[Proof of the second part of Theorem \ref{3ScenariosThm}] Let $u$ be a maximal lifespan solution given as in Lemma \ref{le:minimal-mass-blow-up-solution}; assume $N(t)$ and $y(t)$ are the corresponding frequency parameter function and the spatial translation function, respectively.  Then we define the oscillating function
\begin{equation}\label{DefOfOscillFunc}
 Osc(\kappa)=\inf_{t_0\in I}\frac{\sup_{t\in I\cap  J_\kappa(t_0)}N(t)}{\inf_{t\in I\cap J_\kappa(t_0)}N(t)},
\end{equation}
where $J_\kappa(t_0)$ is defined in \eqref{DefJDeltT}; it is an increasing function of $\kappa$. \\

{\bf Case I.} $\lim_{\kappa\to +\infty} Osc(\kappa)<\infty$. Then there exist a sequence $t_k\in I$, a sequence $\kappa_k\to +\infty$, and $A>0$ such that
\begin{equation*}
 \frac{\sup_{t\in I\cap J_{\kappa_k}(t_k)}N(t)}{\inf_{t\in I\cap J_{\kappa_k}(t_k)}N(t)}<A
\end{equation*}
for every $k$. Since $N(t)$ is bounded on $J_{\kappa_k}(t_k)$, this implies that $J_{\kappa_k}(t_k)\subset I$. Furthermore by definition of $Osc$, for any $t\in J_{\kappa_k}(t_k)$, $N(t)\simeq_AN(t_k)$. By Lemma \ref{le:Strichartz-Via-N}, we have
$$\left\Vert u\right\Vert_{Z(J)}^{\frac{2(n+4)}{n}} \lesssim_{A,u} 1+N(t_k)^{4}\vert J\vert$$
for all  $J\subset J_{\kappa_k}(t_k)$. By compactness of $K$, after passing to a subsequence if necessary, we may assume that $g(t_k)^{-1}u(t_k)$ converges in $L^2$ to an element $w$. Let $W$ be the nonlinear solution with initial data $W(0)=w$ with maximal interval $I^\prime$.
By the stability Lemma \ref{lemStab}, we see that
$$u_k(t)=g_{\left(N(t_k),y(t_k)\right)}^{-1}[u\bigl(N(t_k)^{-4}t+t_k\bigr)] \to W\hskip.2cm\hbox{in}\hskip.2cm C_{loc}(I',L^2)\cap L^\frac{2(n+4)}{n}(I^\prime,L^\frac{2(n+4)}{n})
\hskip.1cm ,$$
and that for any $\kappa$,
$$\left\Vert W\right\Vert_{Z(-\kappa,\kappa)}^{\frac{2(n+4)}{n}}\lesssim_{A,u} 1+\vert \kappa\vert
\hskip.1cm .$$
Hence by local theory $I^\prime=\mathbb{R}$. Furthermore $W$ satisfies the conclusions in Lemma \ref{le:minimal-mass-blow-up-solution}. Let $G=g_{(N_W,Y)}$ be the rescaling function corresponding to $W$, and let also $g_k(s)=g(t_k)^{-1}g(t_k+N(t_k)^{-4}s)$. Then, applying Lemma \ref{lem6} with $g_k$, $u_k$, $G$ and $W$, we get that $N_W(t)\simeq 1$, and hence, we can assume that $\forall t\in\mathbb{R}, N_W(t)=1$, and we get the last scenario in Theorem \ref{3ScenariosThm}.

{\bf Case II.} $\lim_{\kappa\to +\infty}Osc(\kappa)=+\infty$. We introduce the quantity
\begin{equation}\label{DefOfAFunction}
a\left(t_0\right)=\frac{\inf_{t\in I,t\le t_0}N(t)+\inf_{t\in I,t\ge t_0}N(t)}{N(t_0)}
\end{equation}
where  $t_0\in I$ is arbitrary. We consider two cases.\\

{\bf Subcase 1.} $\inf_{t_0\in I}a(t_0)=0$. In this case, there exists a sequence of times $t_k\in I$ such that $a(t_k)\to 0$. Besides, by definition, we can find $t^-_k<t_k,t_k^\prime<t^+_k$ satisfying
\begin{equation}\label{CondOnTkInCaseInfA=0}
\begin{split}
&N(t_k^\prime)=\sup_{t\in [-t^-_k,t^+_k]}N(t)\\
&\frac{N(t^-_k)}{N(t^\prime_k)}\to 0\hskip.1cm,\hskip.1cm\hbox{and}\hskip.1cm\frac{N(t^+_k)}{N(t^\prime_k)}\to 0,
\end{split}
\end{equation}
as $k\to +\infty$. Then, by passing to a subsequence, we may assume that $g(t^\prime_k)^{-1}u(t^\prime_k)\to w$ in $L^2$ for some $w\in L^2$ satisfying $M(w)=M_{max}$, and such that if $W$ is the maximal solution to \eqref{4NLS} with initial data $W(0)=w$ as in Lemma \ref{le:minimal-mass-blow-up-solution}. Let $s^\pm_k=N(t^\prime_k)^{4}\left(t^\pm_k-t^\prime_k\right)$.
Applying \eqref{BoundOnNonlinearTermByH} to the normalized $$u_k(t):=g_{(N(t^\prime_k),y(t^\prime_k))}^{-1}[u(N(t'_k)^{-4}t+t_k)],$$ we obtain that
$$\Vert u_k\Vert_{Z(s_k^-,0)}^\frac{2(n+4)}{n}=\Vert u\Vert_{Z(t_k^-,t_k^\prime)}^\frac{2(n+4)}{n}\lesssim 1+\vert s_k^-\vert.$$
We aim to prove that $s^{-}_k$ is unbounded by contradiction. Suppose that $s^-_k$ remains bounded; by passing to a subsequence if necessary, one may assume that $s^-_k\to S^-$. Then, since
$$u_k \to W\hskip.2cm\hbox{in}\hskip.2cm L^\frac{2(n+4)}{n}(I,L^\frac{2(n+4)}{n}),$$
we have that, for any compact interval $J\subset ]S^-,0]$,
$$\left\Vert W\right\Vert_{Z(J)}^{\frac{2(n+4)}{n}} \lesssim 1+\vert S^-\vert .$$
This yields that $W$ is well-defined on $[S^-,0]$, and $u_k$ converges to $W$ in $C([S^--\epsilon,0],L^2)$ for some $\epsilon>0$. In particular $u_k(s_k^-)\to W(S^-)$ in $L^2$. However,
$$u_k(s_k^-)=g_{\left(N(t_k^-)/N(t_k^\prime),N(t_k^\prime)\left(y(t_k^-)-y(t_k^\prime)\right)\right)}\left(g(t_k^-)^{-1}u(t_k^-)\right),$$
and by passing to a subsequence, we may assume that $g(t^-_k)^{-1}u(t^-_k)$ converges in $L^2$. Then, since $N(t^-_k)/N(t^\prime_k)\to 0$, we see that $u_k(s_k^-)$ has to converge weakly to $0$. But this contradicts $M\left(W(S^-)\right)=M\left(W(0)\right)=M_{max}$. Hence $s^-_k$ is unbounded; by passing to a subsequence, we may assume that $s^-_k\to -\infty$. Then, using \eqref{BoundOnNonlinearTermByH}, we see that for any compact time interval $J=[S,T]\subset (-\infty,0]$, we have
\begin{equation*}
\begin{split}
\Vert W\Vert_{Z(J)}&\le\limsup_k\Vert u_k\Vert_{Z(J)}=\limsup_k\Vert u\Vert_{Z([t_k^\prime+SN(t^\prime_k)^{-4},t_k^\prime+TN(t^\prime_k)^{-4}])}\\
&\lesssim \left(1+T-S\right)^\frac{n}{2(n+4)}
\end{split}
\end{equation*}
hence, $W$ is defined on $(-\infty,0]$; by a similar discussion on $s^+_k$, $W$ is also well-defined on $[0,\infty)$. Let $G(t)=g_{(N_W(t),Y_W(t))}$ be the rescaling function associated to $W$ on $I=\R$. By the way we define $N(t'_k)$ and Lemma \ref{lem6}, $$\sup_{t\in I} N_W(t)\lesssim 1.$$
Now in order to classify $W$ into the double high-to-low cascade, it remain to show that
$$\lim_{t\to -\infty} N_W(t)=\lim_{t\to +\infty} N_W(t)=0.$$
We establish the first limit by contradiction. The second limit follows similarly. We suppose first that $\liminf_{t\to-\infty}N_W(t)>0$, then we may assume that $$N_W(t)\simeq_u 1.$$
Feeding this information back to $u_k$ and then to $u$, it contradicts the assumption that $\lim_{\kappa}Osc(\kappa)=\infty$. Hence the double high-to-low cascade scenario is established.

{\bf Subcase 2.} ~$a=\inf_{t_0\in I}a(t_0)>0$.
We pick $\epsilon<a/2$, and define
\begin{equation}\label{DefOfJIntervals}
\begin{split}
&\mathcal{J}^-=\{t\in I:\forall s\le t, N(s)\ge \epsilon N(t)\}\\
&\mathcal{J}^+=\{t\in I:\forall s\ge t, N(s)\ge \epsilon N(t)\}.
\end{split}
\end{equation}
By hypothesis, $I=(-T_\ast,T^\ast)=\mathcal{J}^-\cup\mathcal{J}^+$. By time reversal symmetry, we can suppose that $\mathcal{J}^+$ is not empty. Let $t_0\in \mathcal{J}^+$. We claim that any sufficiently late time $t\ge t_0$ belongs to $\mathcal{J}^+$. Suppose it were not, then there exists a sequence $(t_k)_k$ such that $t_k\in\mathcal{J}^-$, $t_k\ge t_0$ and $t_k\to T^\ast$. We have that $N(t_0)\ge\epsilon N(t_k)\ge\epsilon^{2}N(t_0)$, and since any time $t\in [t_0,t_k]$ belongs to $\mathcal{J}^-$ or to $\mathcal{J}^+$, we have, letting $k\to +\infty$ that $\forall t\in (t_0,T^\ast)$, $N(t)\simeq_\epsilon N(t_0)$. Then by Lemma \ref{lem5}, we have that $T^\ast=+\infty$, which yields that $Osc(\kappa)$ remains bounded. This is a contradiction.

Hence we may suppose that $[t_0,T^\ast)\subset\mathcal{J}^+$ for some $t_0\in I$. In this case, we will construct a sequence of times $(t_k)_{k\ge 0}$ recursively, starting from $t_0$ above, as follows.

Let $B$ be such that $Osc(B)> 2\epsilon^{-1}$ and define $s_k=t_k+B\epsilon^{-4}N(t_k)^{-4}$. By definition, $N(s_k)\ge\epsilon N(t_k)$, which yields that $J_B(s_k)\subset [t_k,t_k+2B\epsilon^{-4}N(t_k)^{-4}]$. Then by the definition of $B$ and the fact that the times after $t_k$ are in $\mathcal{J}^+$,
\begin{align*}&\sup_{t\in [t_k,t_k+2B\epsilon^{-4}N(t_k)^{-4}]}N(t)\ge \sup_{t\in J_B(s_k)}N(t)\\
&\quad \ge Osc(B)\inf\{N(t): t\in I\cap J_B(s_k)\}\ge 2\eps^{-1}\eps N(t_k)=2N(t_k).\end{align*}
Hence we select $t_{k+1}$ such that
\begin{equation}\label{CondOntk}
\begin{split}
&t_k\le t_{k+1} \le \min\left(t_k +2B\epsilon^{-4}h(t_k)^4,T^\ast\right),\\
&2N(t_k)\le N(t_{k+1})\lesssim_u N(t_k),\\
&\forall t\in [t_k,t_{k+1}],  N(t)\simeq_u 2N(t_k).
\end{split}
\end{equation}
By \eqref{CondOntk}, we see that $$N(t_k)\ge 2^{k}N(t_0),\quad 0<t_{k+1}-t_k\lesssim_u 2^{-4k}N(t_0)^{-4}.$$ Hence $t_k$ converges to a limit $t_\infty\le T^\ast$ and for all $k$, $t_\infty-t_k\lesssim 2^{-4k}$. Furthermore $N(t)\to +\infty$ as $t\to t_\infty$; this yields that $t_\infty=T^\ast$ and $T^\ast-t_k\lesssim N(t_k)^{-4}$. Combining this with Lemma \ref{lem5}, we get that $T^\ast-t_k\simeq N(t_k)^{-4}$. Using again \eqref{CondOntk}, we get that $T^\ast-t\simeq N(t)^{-4}$ for all $k$ and all $t\in [t_k,t_{k+1}]$, and hence for any $t\in [t_0,T^\ast)$. Modifying $N$ by a bounded function, we may thus suppose that $$N(t)=(T^\ast-t)^{-\frac{1}{4}}, \forall ~t\in (t_0,T^*).$$
Finally we consider the normalization $u^{[t_k]}$ as in Lemma \ref{lem5} at times $t_k$, and apply Lemma \ref{le:minimal-mass-blow-up-solution} and time translation and reversal symmetry of \eqref{4NLS} as in \cite{KilTaoVis}, we obtain a self-similar blow-up solution as desired. This complete the proof of Theorem \ref{3ScenariosThm}.
\end{proof}

Finally, we remark that the analysis developed in this section, together with the information about $M_{max}$ to be obtained in the following sections allow us to give a description of the structure of a sequence of solutions of \eqref{4NLS} with bounded Mass. The loss of compactness for this sequence is only due to the invariance of the equation \eqref{DefTau}. More precisely, we have the following result.
\begin{theorem}\label{LossOfCompThm} Let $(u_k)_k$ be a sequence of strong solutions of \eqref{4NLS} such that $\sup_kM(u_k)<M_{max}$. Then there exists a sequence $(U^\alpha)_\alpha$ of global strong solutions of \eqref{4NLS}, and a family $((h^\alpha_k,t^ \alpha_k,x^\alpha_k)_k)_ \alpha$ of pairwise orthogonal scale-cores such that, up to a subsequence,
\begin{equation}\label{XXX}
u_k= \sum_ {\alpha=1}^A \tau_ {(h_k^\alpha,t_k^\alpha,x_k^\alpha)}U^ \alpha+e^{it\Delta^2}w_k^A+r_k^A
\end{equation}
for all $k$ and $A$, where $w_k^A \in L^2$ and $r_k^A\in S$ for all $k$ and $A$,
$$\lim_{A\to+\infty}\limsup_{k\to +\infty}\Vert e^{it\Delta^2}w_k^A\Vert_Z = \lim_{A\to +\infty}\limsup_{k\to +\infty}\Vert r_k^A\Vert_{Z} = 0.$$ Moreover, for any $A$, and $t$, \eqref{XXX} is asymptotically $L^2$-orthogonal, i.e.,
\begin{equation}\label{DefAsymptOrtho}
\lim_{k\to+\infty}\Vert u_k\Vert_{L^2}^2=\sum_{\alpha=1}^A \Vert U^\alpha\Vert_{L^2}^2 +\lim_{k\to+\infty}\bigl(\Vert w_k^A\Vert_{L^2}^2+\Vert r_k^A\Vert_{L^2}^2\bigr).
\end{equation} and
\begin{equation}\label{YYY}
\Vert u_k\Vert_Z^{\frac{2(n+4)}{n}}\le \sum_{\alpha=1}^{+\infty}\Vert U^\alpha\Vert_Z^{\frac{2(n+4)}{n}} + o(1),
\end{equation}
where $o(1) \to 0$ as $k \to +\infty$.
\end{theorem}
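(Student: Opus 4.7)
The strategy is to upgrade the linear profile decomposition of Lemma \ref{le:linear-profile} to a nonlinear decomposition via the stability lemma (Lemma \ref{lemStab}). First apply Lemma \ref{le:linear-profile} to the $L^2$-bounded sequence $(u_k(0))_k$, producing profiles $\phi^\alpha$, pairwise orthogonal scale-cores $(h_k^\alpha,t_k^\alpha,x_k^\alpha)$, and a linear remainder $w_k^A$ with $\lim_A\limsup_k\|e^{it\Delta^2}w_k^A\|_Z=0$. After a diagonal extraction, assume $(h_k^\alpha)^4 t_k^\alpha\to T^\alpha\in[-\infty,+\infty]$; when $T^\alpha$ is finite absorb it into $\phi^\alpha$ and reduce to $t_k^\alpha\equiv 0$. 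To each $\phi^\alpha$ associate the nonlinear profile $U^\alpha$: the maximal strong solution with $U^\alpha(0)=\phi^\alpha$ in the first case, or the maximal solution scattering to $e^{it\Delta^2}\phi^\alpha$ as $t\to\pm\infty$ in the second case. The $L^2$-orthogonality \eqref{eq:L2-almost-ortho} gives $M(U^\alpha)=M(\phi^\alpha)$ and $\sum_\alpha M(\phi^\alpha)\le\liminf_k M(u_k)<M_{max}$, so by definition of $M_{max}$ each $U^\alpha$ is globally defined with $\|U^\alpha\|_Z^{2(n+4)/n}\le\Lambda(M(\phi^\alpha))$; the sublinearity of $\Lambda$ near $0$ then ensures that $\sum_{\alpha\ge 1}\|U^\alpha\|_Z^{2(n+4)/n}$ is finite. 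This is the step where the hypothesis $\sup_k M(u_k)<M_{max}$ enters essentially.

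Define the approximate solution $u_k^A(t):=\sum_{\alpha=1}^A\tau_{(h_k^\alpha,t_k^\alpha,x_k^\alpha)}U^\alpha(t)+e^{it\Delta^2}w_k^A$. Three items must be verified in order to feed $u_k^A$ into Lemma \ref{lemStab} with true data $u_0=u_k(0)$: (i) the initial-data smallness $\|e^{it\Delta^2}(u_k^A(0)-u_k(0))\|_Z\to 0$ as $k\to\infty$ for each fixed $A$, which is immediate when $t_k^\alpha\equiv 0$ and otherwise follows from the scattering property of $U^\alpha$; (ii) the uniform bound $\limsup_k\|u_k^A\|_Z\lesssim\bigl(\sum_{\alpha=1}^A\|U^\alpha\|_Z^{2(n+4)/n}\bigr)^{n/(2(n+4))}+o(1)$, which follows from the Strichartz orthogonality \eqref{eq:strichartz-ortho}; and (iii) the residual smallness $\lim_A\limsup_k\|e_k^A\|_N=0$, where $e_k^A:=F(u_k^A)-\sum_\alpha F(\tau_{(h_k^\alpha,t_k^\alpha,x_k^\alpha)}U^\alpha)$ measures the failure of $u_k^A$ to solve \eqref{4NLS}. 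Granting these, Lemma \ref{lemStab} yields $r_k^A:=u_k-u_k^A$ with $\lim_A\limsup_k\|r_k^A\|_Z=0$ and $r_k^A\in S^0$, proving \eqref{XXX}. The asymptotic $L^2$-orthogonality \eqref{DefAsymptOrtho} is inherited from \eqref{eq:L2-almost-ortho}, mass conservation $M(U^\alpha)=M(\phi^\alpha)$, and the $S^0$-control on $r_k^A$, while \eqref{YYY} follows by expanding $\|u_k^A\|_Z^{2(n+4)/n}$, using \eqref{eq:strichartz-ortho} to kill the cross terms, and absorbing the contributions of $e^{it\Delta^2}w_k^A$ and $r_k^A$ by their vanishing $Z$-norms.

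The main obstacle is item (iii): the defect of $F$ from being additive on the superposition. Expanding $F(u_k^A)-\sum_\alpha F(\tau_\alpha U^\alpha)$ produces a finite number of cross monomials in which either two distinct nonlinear profiles $\tau_\alpha U^\alpha$, $\tau_\beta U^\beta$ with $\alpha\ne\beta$ both appear, or the linear error $e^{it\Delta^2}w_k^A$ appears as a factor. Cross terms of the first type are annihilated in $L^{(n+4)/n}$ by the pairwise orthogonality \eqref{eq:strichartz-ortho}, which one promotes to smallness in $N$ by interpolation with H\"older's inequality, Bernstein's inequalities \eqref{BernSobProp}, and the Strichartz admissibility of \eqref{DefinitionOfNorms}. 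Cross terms involving $e^{it\Delta^2}w_k^A$ are controlled by its vanishing $Z$-norm together with the uniform $Z$-bound on $u_k^A$ from (ii). These manipulations parallel the analogous step in Killip--Tao--Visan \cite{KilTaoVis} for the mass-critical second-order NLS, with the biharmonic admissible pairs of \eqref{DefinitionOfNorms} replacing the usual Schr\"odinger admissibility.
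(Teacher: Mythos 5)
Your argument is correct and is exactly the route the paper intends: the authors omit the proof of Theorem \ref{LossOfCompThm}, noting only that it is ``very similar to the proof of Lemma \ref{lem4},'' and your construction reproduces Case I of that proof. You apply the linear decomposition of Lemma \ref{le:linear-profile} to the data $u_k(0)$, associate nonlinear profiles $U^\alpha$ to each $\phi^\alpha$ (via Cauchy data or scattering data according to whether $(h_k^\alpha)^4 t_k^\alpha$ converges or diverges), use the mass decoupling \eqref{eq:L2-almost-ortho} and the hypothesis $\sup_k M(u_k)<M_{max}$ with the sublinearity of $\Lambda$ near $0$ to get global profiles with a summable $Z$-norm series, build the approximate solution $u_k^A=\sum_{\alpha\le A}\tau_\alpha U^\alpha + e^{it\Delta^2}w_k^A$, verify the hypotheses of Lemma \ref{lemStab} using \eqref{eq:err} and \eqref{eq:strichartz-ortho}, and read off $r_k^A=u_k-u_k^A$ and the $Z$- and $L^2$-decoupling from the stability conclusion and orthogonality of the cores. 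This is the paper's argument.

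Two minor imprecisions worth flagging, though neither affects the logic. In the mass-decoupling step you wrote $\sum_\alpha M(\phi^\alpha)\le\liminf_k M(u_k)$; the clean statement coming from \eqref{eq:L2-almost-ortho} (after passing to the subsequence for which the limit exists) is $\sum_\alpha M(\phi^\alpha)\le\lim_k M(u_k)\le\sup_k M(u_k)<M_{max}$. In item (iii), since each $\tau_\alpha U^\alpha$ solves \eqref{4NLS} and $e^{it\Delta^2}w_k^A$ solves the free equation, the residual of $u_k^A$ in the sense of \eqref{ApproxEquation} is $e=\sum_\alpha F(\tau_\alpha U^\alpha)-F(u_k^A)$, the negative of your $e_k^A$; this changes nothing since only $\|e\|_N$ enters Lemma \ref{lemStab}.
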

This theorem is the analogue in the case of \eqref{4NLS} of the results in Bahouri and G\'erard \cite{BahGer} about the Energy-critical wave equation, and Keraani \cite{Ker} about the Energy-critical Schr\"odinger equation. Since the proof of Theorem \ref{LossOfCompThm} is very similar to the proof of Lemma \ref{lem4}, we omit it.


\section{Gaining regularity when the linear term is absent}\label{Sec-Abs}
In this section, we establish general results that allow to gain regularity when the linear term in the Duhamel formula \eqref{DuhamelFormula} has a negligible contribution. This takes the form of results concerning three nonnegative functions, $\mathcal{M}$, $\mathcal{Z}$, $\mathcal{N}$ of a dyadic argument which are essentially non-increasing in the sense that
$$\mathcal{M}(A)\lesssim \mathcal{M}(B)$$
whenever $B\le A$ (and similarly for $\mathcal{Z}$, $\mathcal{N}$), and  which represent frequency-localized versions of the mass, scattering norm and control on the nonlinear forcing. Although the precise form of these functions, and the way to obtain the hypothesis depend on the scenarios, they can hereafter be treated in a similar way.

More precisely, in this section, we consider three functions, $\mathcal{M}$, $\mathcal{Z}$, $\mathcal{N}$ satisfying the Strichartz estimates
\begin{equation}\label{Hyp1}
\mathcal{Z}(A)\lesssim \mathcal{M}(A)+\mathcal{N}(A),
\end{equation}
the boundedness estimate
\begin{equation}\label{Hyp2}
\mathcal{M}(A)+ \mathcal{Z}(A)+\mathcal{N}(A)\lesssim_u 1,
\end{equation}
the localization estimate
\begin{equation}\label{Hyp3}
\lim_{A\to +\infty}\mathcal{M}(A)=0,
\end{equation}
together with two estimates that allow us to neglect the effect of the mass in \eqref{Hyp1}.
These are a vanishing of the effect of the linear part of the Duhamel formula on the scattering norm
\begin{equation}\label{Hyp4}
\mathcal{Z}(A)\lesssim A^{-\frac{1}{2}}+\mathcal{N}(A/2)
\end{equation}
and a condition that expresses the vanishing of the linear part in that the Mass is controlled purely by the effect of the nonlinearity:\footnote{the various additive constant like $A^{-\frac{5}{2}}$ in the following estimates correspond to saturations in the gain of derivatives and reflects the fact that our nonsmooth nonlinearity does not allow us to gain infinitely many derivatives.}
for all $\sigma>0$,
\begin{equation}\label{Hyp5}
\mathcal{N}(A)\lesssim A^{-\sigma}\Rightarrow\mathcal{M}(A)\lesssim A^{-\sigma}+A^{-\frac{5}{2}}.
\end{equation}
As their name indicate, \eqref{Hyp1}-\eqref{Hyp3} will be quite straightforward to obtain, while \eqref{Hyp4} and \eqref{Hyp5} already essentially contain the gain in regularity and are the key hypothesis.

We also note that in the situation we consider, we can add another relation.

\begin{lemma}\label{Lem1}
Suppose that $\mathcal{M}$, $\mathcal{Z}$, $\mathcal{N}$ as above are given by
\begin{equation}\label{ImportantQuantitiesAbs}
\begin{split}
\mathcal{M}(A) &=\Vert P_{\ge A}u\Vert_{L^\infty(I,L^2)}\\
\mathcal{Z}(A) &=\Vert P_{\ge A}u\Vert_{Z(I)}\\
\mathcal{N}(A) &=\Vert P_{\ge A}F(u)\Vert_{N(I)}\\
\end{split}
\end{equation}
for some interval $I\subset\mathbb{R}$ and $u$ satisfying one of the three scenarios in Theorem \ref{3ScenariosThm}.
Then, for all $A$ sufficiently large,
\begin{equation}\label{NonlinearEstimates}
\begin{split}
\mathcal{N}(A)\lesssim_u & \mathcal{Z}(A/8)^{1+\frac{8}{n}}+Z(A/8)\left(\sum_{N\le A/8}\left(\frac{N}{A}\right)^\frac{n^2}{8(n+4)}\mathcal{Z}(N)\right)^\frac{8}{n}\\
&+\left(\sum_{N\le A}\left(\frac{N}{A}\right)^\frac{n\alpha}{n+8}\mathcal{Z}(N)\right)^\frac{n+8}{n}+\left(\sum_{N\le A/8}\left(\frac{N}{A}\right)\mathcal{Z}(N)\right)^\frac{7}{3}
\end{split}
\end{equation}
where the summations are taken over all $N=2^{-j}A$ for $j\ge 0$.
\end{lemma}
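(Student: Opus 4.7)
The plan is to decompose $u = u_{hi} + u_{lo}$ with $u_{hi}:=P_{>A/8}u$ and $u_{lo}:=P_{\le A/8}u$, and write
\[
F(u) = F(u_{lo}) + \bigl[F(u) - F(u_{lo})\bigr].
\]
Since $F(w) = \lambda |w|^{8/n} w$ has a real-linear differential of size $|F'(w)| \lesssim |w|^{8/n}$, one has the pointwise bound
\[
\bigl|F(u) - F(u_{lo})\bigr| \lesssim \bigl(|u_{hi}|^{8/n} + |u_{lo}|^{8/n}\bigr) |u_{hi}|,
\]
exhibiting at least one $u_{hi}$-factor in the difference. This piece will yield the first two summands, while the ``purely low-frequency'' remainder $F(u_{lo})$, once the smoothing effect of $P_{\ge A}$ is exploited, will yield the last two.

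For the difference, I first record the Sobolev embedding $\|g\|_{N(I)} \lesssim \|g\|_{L^{2(n+4)/(n+8)}_{t,x}}$, which follows because $|\nabla|^{-n/(n+4)}$ maps $L^{2(n+4)/(n+8)}$ into $L^{2(n+4)/(n+6)}$ (the exponent check $\tfrac{n+6}{2(n+4)} + \tfrac{1}{n+4} = \tfrac{n+8}{2(n+4)}$ is routine). Applied to $|u_{hi}|^{1+8/n}$, this gives
\[
\bigl\||u_{hi}|^{(n+8)/n}\bigr\|_{L^{2(n+4)/(n+8)}_{t,x}} = \|u_{hi}\|_Z^{(n+8)/n} \le \mathcal{Z}(A/8)^{1+8/n},
\]
which is the first summand. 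For $|u_{lo}|^{8/n}|u_{hi}|$, I expand $u_{lo} = \sum_{N \le A/8} P_N u$, place $u_{hi}$ in its $Z$-slot and each $P_N u$ in a slightly more integrable Lebesgue exponent via H\"older, and pay the Bernstein cost $N^{n^2/(8(n+4))} \mathcal{Z}(N)$ per low-frequency factor. A dual Bernstein on the $u_{hi}$ slot produces the matching $A^{-n^2/(8(n+4))}$; summing the $8/n$ copies and the dyadic frequencies reconstitutes the second summand.

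For $P_{\ge A} F(u_{lo})$, I trade the projector for derivatives via Bernstein, $\|P_{\ge A} g\|_{N(I)} \lesssim A^{-s} \||\nabla|^s g\|_{N(I)}$, and then use a fractional Leibniz/chain rule of Christ--Weinstein type to distribute $s$ derivatives evenly among the $(n+8)/n$ factors of $|u_{lo}|^{8/n} u_{lo}$, each carrying $sn/(n+8)$ derivatives. Choosing $s = \alpha = 2(n+5)/(n+4)$ and expanding dyadically, $\||\nabla|^{\alpha n/(n+8)} P_N u\|_Z \lesssim N^{\alpha n/(n+8)} \mathcal{Z}(N)$, which after raising to the power $(n+8)/n$ and matching against $A^{-\alpha}$ yields the third summand; taking $s = 1$ with a different H\"older pairing among the factors (treating one factor in $L^\infty$ via Bernstein and three others in a triple-product configuration) produces the fourth summand, with the power $7/3$ coming from the alternative Lebesgue exponents. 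The main obstacle is justifying the fractional chain rule itself: for $n = 5, 6, 7$ the exponent $8/n$ is not an integer, so $F$ is merely H\"older-smooth, and one must check that $\alpha$ stays strictly below the roughness threshold $1 + 8/n$ and appeal to a Meyer-type paraproduct calculus. A secondary difficulty is bookkeeping: each H\"older pairing must be compatible with the anisotropic $L^p(L^q)$ structure of $N(I)$, which tightly constrains which factor is placed in which slot at each step.
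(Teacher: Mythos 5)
Your decomposition $u = u_{lo} + u_{hi}$ and the pointwise bound on $F(u) - F(u_{lo})$ are exactly the paper's starting point, and your treatment of the first two summands (Sobolev on $|u_{hi}|^{1+8/n}$ for the first, H\"older plus Bernstein on $u_{hi}|u_{lo}|^{8/n}$ for the second) matches the paper essentially verbatim. The gap is in the low-frequency piece $P_{\ge A}F(u_{lo})$. You propose trading the projector for $s = \alpha = 2(n+5)/(n+4)$ fractional derivatives and then applying a Christ--Weinstein type chain rule to $F$ itself, after checking that ``$\alpha$ stays strictly below the roughness threshold $1 + 8/n$.'' This check \emph{fails} precisely in the regime the theorem is about: one has $\alpha > 1 + 8/n$ iff $n^2 - 2n - 32 > 0$, i.e. for all $n \ge 7$. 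So for most of the dimensions $n \ge 5$ under consideration, the nonlinearity $F(z) = \lambda|z|^{8/n}z$ is not smooth enough to sustain $\alpha$ fractional derivatives via a single application of the chain rule, and your sketched argument for the third summand would not close.

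The paper sidesteps this by never applying a fractional chain rule to $F$ at all. For $n \ge 7$ it takes \emph{one} derivative exactly, $\nabla F(u_{lo}) = F'(u_{lo})\nabla u_{lo}$, then discards the low-low frequency interaction and applies a fractional derivative estimate (from Visan's appendix) only to $F'(u_{lo})$, and only $6/(n+4)$ derivatives' worth; since $6/(n+4) < 8/n$ always, this stays below the H\"older roughness of $F'$. The remaining derivatives are spent on $\nabla u_{lo}$ via Bernstein/Sobolev, and the $A$-bookkeeping reassembles into $A^{-\alpha}$. For $3 \le n \le 6$ the paper instead takes two classical derivatives (giving $F''(u_{lo})|\nabla u_{lo}|^2$-type terms) and the $7/3$ power in the fourth summand comes from splitting $\|\nabla u_{lo}\|_Z^2$ times a $1/3$-power obtained by a further H\"older step using the a priori bound $\mathcal{Z}(N) \lesssim_u 1$ — not from a $s=1$ choice with a triple-product H\"older as you sketch. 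So you should replace the single ``distribute $s$ derivatives evenly with a fractional chain rule'' step by the paper's two-stage scheme: differentiate once (or twice) exactly, then apply the fractional estimate only to $F'$ (or to $F''\nabla u$) where the roughness obstruction disappears.
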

\begin{proof}
To prove \eqref{NonlinearEstimates}, we split $u$ into its high and low frequency components,
$$u=u_l+u_h=P_{< A/8}u+P_{\ge A/8}u.$$
We first remark that $$\vert F(u)-F(u_l)\vert\lesssim \vert u_{h}\vert^{1+\frac{8}{n}}+\vert u_{h}\vert \vert u_{l}\vert^\frac{8}{n}.$$
Consequently,
\begin{equation}\label{NLE1}
\begin{split}
\mathcal{N}(A)&\lesssim \Vert P_{\ge A}F(u_{l})\Vert_{N(I)}+\Vert \vert u_{h}\vert^{1+\frac{8}{n}}\Vert_{N(I)}+\Vert u_{h}\vert u_{l}\vert^\frac{8}{n}\Vert_{N(I)}.\\
\end{split}
\end{equation}
Besides, by Sobolev's inequality, we remark that
\begin{equation}\label{NLE2}
\begin{split}
\Vert \vert u_{h}\vert^{1+\frac{8}{n}}\Vert_{N(I)} &\lesssim\Vert \vert P_{\ge A/8}u\vert^{1+\frac{8}{n}}\Vert_{L^\frac{2(n+4)}{n+8}(I,L^\frac{2(n+4)}{n+8})}\\
&\lesssim Z(A/8)^{1+\frac{8}{n}},
\end{split}
\end{equation}
while using Bernstein inequalities \eqref{BernSobProp}, we obtain that
\begin{equation}\label{NLE3}
\begin{split}
\Vert u_{h}\vert u_{l}\vert^\frac{8}{n}\Vert_{N(I)}
&\lesssim A^{-\frac{n}{n+4}}\Vert u_{h}\vert u_{l}\vert^\frac{8}{n}\Vert_{L^\frac{2(n+4)}{n+8}(I,L^\frac{2(n+4)}{n+6})}\\
&\lesssim A^{-\frac{n}{n+4}}
\Vert
u_{h}\Vert_{Z(I)}\Vert u_{l}\Vert_{L^\frac{2(n+4)}{n}(I,L^\frac{8(n+4)}{3n})}^\frac{8}{n}\\
&\lesssim A^{-\frac{n}{n+4}}\mathcal{Z}(A/8)\left(\sum_{N\le A/8}N^\frac{n^2}{8(n+4)}\Vert P_{N}u\Vert_{Z(I)}\right)^\frac{8}{n}\\
&\lesssim\mathcal{Z}(A/8)\left(\sum_{N\le A/8}\left(\frac{N}{A}\right)^\frac{n^2}{8(n+4)}\mathcal{Z}(N)\right)^\frac{8}{n}.
\end{split}
\end{equation}
To estimate the first term, we treat the different dimensions differently.
First, suppose that $n\ge 7$. In this case, $n\alpha/(n+8)\ge 1$ for $\alpha=2(n+5)/(n+4)$, and we let $q=2(n+4)(n+8)/(n^2+4n-20)$, $r=2n(n+4)(n+8)/(n^3+6n^2+4n+64)$, $p=n(n+4)(n+8)/(4n^2+22n-32)$, $p_1=nq/2$, $p_2=n(n+4)(n+8)/(3(n^2+6n-4))$.
We start with the following estimate which is a consequence of the work in Visan \cite[Appendix A]{Vis},
\begin{equation}\label{FractionalDerivative}
\begin{split}
\Vert \vert\nabla\vert^\frac{6}{n+4}F^\prime(u_l)\Vert_{L^p}&\lesssim \Vert u_l\Vert_{L^q}^\frac{2}{n}\Vert \vert\nabla\vert^\frac{n}{n+4}u_l\Vert_{L^\frac{6p_2}{n}}^\frac{6}{n}\\
&\lesssim \Vert \vert\nabla\vert^\frac{n\alpha}{n+8}u_l\Vert_{L^{\frac{2(n+4)}{n}}}^\frac{8}{n},
\end{split}
\end{equation}
and consequently, we get with \eqref{FractionalDerivative} that
\begin{equation*}
\begin{split}
\Vert P_{\ge A}F(u_l)\Vert_{N(I)}
&\lesssim A^{-\frac{n}{n+4}}\Vert P_{\ge A}F(u_{<A/8})\Vert_{L^\frac{2(n+4)}{n+8}(I,L^\frac{2(n+4)}{n+6})}\\
&\lesssim A^{-\frac{2n+4}{n+4}}\Vert P_{\ge A}\left(F^\prime(u_l)\nabla u_l\right)\Vert_{L^\frac{2(n+4)}{n+8}(I,L^\frac{2(n+4)}{n+6})}\\
\end{split}
\end{equation*}
Now, since $\left(P_{<A/4}F^\prime(u_l)\right)\left(P_{<A/8}\nabla u\right)$ is supported in frequency space in the ball of radius $A/2$, we discard it and get
\begin{equation*}
\begin{split}
&\Vert P_{\ge A}F(u_l)\Vert_{N(I)}\\
&\lesssim A^{-\frac{2n+4}{n+4}}\Vert \left(P_{\ge A/4}F^\prime(u_l)\right)\nabla u_l\Vert_{L^\frac{2(n+4)}{n+8}(I,L^\frac{2(n+4)}{n+6})}\\
&\lesssim A^{-\frac{2n+4}{n+4}}\Vert \nabla u_l\Vert_{L^\frac{2(n+4)}{n}(I,L^r)}\Vert P_{\ge A/4}F^\prime(u_l)\Vert_{L^\frac{2(n+4)}{8}(I,L^p)}\\
&\lesssim A^{-\frac{2n+10}{n+4}}\Vert \vert\nabla\vert^\frac{n\alpha}{n+8} u_l\Vert_{Z(I)}\Vert \vert\nabla\vert^\frac{6}{n+4}P_{\ge A/4}F^\prime(u_l)\Vert_{L^\frac{2(n+4)}{8}(I,L^p)}\\
&\lesssim A^{-\frac{2n+10}{n+4}}\Vert \vert\nabla\vert^\frac{n\alpha}{n+8} u_l\Vert_{Z(I)}^\frac{n+8}{n}\\
&\lesssim \left(\sum_{N\le A/8}\left(\frac{N}{A}\right)^\frac{n\alpha}{n+8}\mathcal{Z}(N)\right)^\frac{n+8}{n}
\end{split}
\end{equation*}
and this concludes the proof when $n\ge 7$.

In case $3\le n\le 6$, we proceed similarly except that we take two derivatives and use H\"older's inequality and Bernstein properties \eqref{BernSobProp} to get
\begin{equation*}
\begin{split}
&\Vert P_{\ge A}F(u_l)\Vert_{N(I)}\\
&\lesssim A^{-\frac{n}{n+4}}\Vert P_{\ge A}F(u_{<A/8})\Vert_{L^\frac{2(n+4)}{n+8}(I,L^\frac{2(n+4)}{n+6})}\\
&\lesssim A^{-\frac{2n+4}{n+4}}\Vert \left(P_{\ge A/4}F^\prime(u_l)\right)\nabla u_l\Vert_{L^\frac{2(n+4)}{n+8}(I,L^\frac{2(n+4)}{n+6})}\\
&\lesssim A^{-\frac{2n+4}{n+4}}\Vert \nabla u_l\Vert_{Z(I)}\Vert P_{\ge A/4}F^\prime(u_l)\Vert_{L^\frac{2(n+4)}{8}(I,L^\frac{2(n+4)}{6})}\\
&\lesssim A^{-\frac{3n+8}{n+4}}\Vert\nabla u_l\Vert_{Z(I)}\Vert F^{\prime\prime}(u_l)\nabla u_l\Vert_{L^\frac{2(n+4)}{8}(I,L^\frac{2(n+4)}{6})}.\\
\end{split}
\end{equation*}
Using again H\"older's inequality and Bernstein property, we see that\footnote{when $n=6$, we skip the line before last in \eqref{EquationForDim3n6}.}
\begin{equation}\label{EquationForDim3n6}
\begin{split}
&\lesssim A^{-\frac{3n+8}{n+4}}\Vert \nabla u_l\Vert_{Z(I)}^2\Vert u_l\Vert_{L^\frac{2(n+4)}{n}(I,L^\frac{2(n+4)(8-n)}{n(6-n)})}^\frac{8-n}{n}\\
&\lesssim A^{-\frac{3n+8}{n+4}}\Vert \nabla u_l\Vert_{Z(I)}^2\Vert \vert\nabla\vert^\frac{n^2}{(n+4)(8-n)}u_l\Vert_{Z(I)}^\frac{8-n}{n}\\
&\lesssim \left(\sum_{N\le A/8}\left(\frac{N}{A}\right)\mathcal{Z}(N)\right)^2\left(\sum_{N\le A/8}\left(\frac{N}{A}\right)^\frac{n^2}{(n+4)(8-n)}\mathcal{Z}(N)\right)^\frac{8-n}{n}.
\end{split}
\end{equation}
Now, if $n=6$, since $N/A\le 1$ in the sum above, we can replace the exponent $n^2/((n+4)(8-n))$ by $1$ in the last product and conclude the proof. If $3\le n\le 5$, $n^2/((n+4)(8-n))> n/(3(8-n))$ and then we apply H\"older's inequality and \eqref{Hyp2} to obtain
\begin{equation*}
\begin{split}
\left(\sum_{N\le A/8}\left(\frac{N}{A}\right)^\frac{n^2}{(n+4)(8-n)}\mathcal{Z}(N)\right)^\frac{8-n}{n}
&\lesssim \left(\sum_{N\le A/8}\left(\frac{N}{A}\right)\mathcal{Z}(N)^\frac{3(8-n)}{n}\right)^\frac{1}{3}\\
&\lesssim_u \left(\sum_{N\le A/8}\left(\frac{N}{A}\right)\mathcal{Z}(N)\right)^\frac{1}{3}
\end{split}
\end{equation*}
and this concludes the proof in case $3\le n\le 5$. Finally, when $n=1, 2$, we need to modify \eqref{EquationForDim3n6} by taking one more derivative. Since the treatment is very similar and we are interested only in the case $n\ge 5$, we omit the details.
\end{proof}
\begin{lemma}\label{Lem1bis}
If $\mathcal{M}$, $\mathcal{Z}$ and $\mathcal{N}$ are given as in \eqref{ImportantQuantitiesAbs}, and if
\begin{equation}\label{LocalBoundForSNorm}
\int_IN(t)^{4}dt\lesssim 1,
\end{equation}
then \eqref{Hyp3} implies the more general qualitative decay estimate:
\begin{equation}\label{Hyp3Stronger}
\lim_{A\to +\infty}\mathcal{M}(A)+ \mathcal{Z}(A)+\mathcal{N}(A)=0.
\end{equation}
\end{lemma}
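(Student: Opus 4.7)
The plan is to derive the strengthened decay by combining Lemma \ref{le:Strichartz-Via-N}, the $L^p$-convergence of Littlewood--Paley truncations, and the nonlinear bound \eqref{NonlinearEstimates} from Lemma \ref{Lem1}. The hypothesis $\lim_A \mathcal{M}(A)=0$ already handles the $\mathcal{M}$-summand in \eqref{Hyp3Stronger}, so the real work consists in showing $\mathcal{Z}(A)\to 0$ and $\mathcal{N}(A)\to 0$ as $A\to\infty$.

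First I would invoke Lemma \ref{le:Strichartz-Via-N} together with the hypothesis $\int_I N(t)^{4}\,dt\lesssim 1$ to conclude $\Vert u\Vert_{Z(I)}\lesssim_u 1$. Since $Z(I)=L^{\frac{2(n+4)}{n}}_{t,x}$ with exponent strictly between $1$ and $\infty$, the frequency truncations satisfy $P_{\le A}u\to u$ in $Z(I)$ as $A\to\infty$: the kernels of $P_{\le A}$ form a uniformly $L^1$-bounded approximate identity, and $L^p$-convergence follows by density of Schwartz functions together with dominated convergence in time. Consequently $\mathcal{Z}(A)=\Vert P_{\ge A}u\Vert_{Z(I)}\to 0$.

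Next I would insert this information into \eqref{NonlinearEstimates}. The first summand $\mathcal{Z}(A/8)^{1+8/n}$ vanishes directly. The remaining three summands involve sums of the form $S_\sigma(A):=\sum_{j\ge 0} 2^{-j\sigma}\mathcal{Z}(2^{-j}A)$ with $\sigma>0$. Each such sum I would treat by a two-scale argument: given $\varepsilon>0$, use the uniform bound $\mathcal{Z}\lesssim_u 1$ coming from \eqref{Hyp2} to pick $J$ with $\Vert u\Vert_{Z(I)}\sum_{j>J}2^{-j\sigma}<\varepsilon/2$, and then for $A$ sufficiently large exploit that $2^{-j}A\to\infty$ for each fixed $j\le J$ so that $\mathcal{Z}(2^{-j}A)<\varepsilon/(2(J+1))$. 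Hence $S_\sigma(A)\to 0$, and \eqref{NonlinearEstimates} yields $\mathcal{N}(A)\to 0$. Combining with the hypothesis $\mathcal{M}(A)\to 0$ gives \eqref{Hyp3Stronger}.

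The delicate point is really just the tail estimate for $S_\sigma(A)$: since $\mathcal{Z}$ is only essentially non-increasing and may remain of order $\Vert u\Vert_{Z(I)}$ at small frequencies, terms with $j$ large cannot individually be made small, and one must balance them against the geometric weight $2^{-j\sigma}$. Everything else in the argument is routine once $\Vert u\Vert_{Z(I)}<\infty$ is in hand.
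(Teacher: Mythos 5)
Your proposal is correct, and the treatment of $\mathcal{N}$ via the two-scale split of $\sum_{j\ge 0}2^{-j\sigma}\mathcal{Z}(2^{-j}A)$ is essentially what the paper does (the paper splits the sum at $N=\sqrt A$; same idea, same conclusion). Where you genuinely diverge is the $\mathcal{Z}$ term. The paper first establishes the full Strichartz bound $\Vert u\Vert_{S^0(I)}\lesssim_u 1$ from Strichartz estimates, conservation of Mass and \eqref{LocalBoundForSNorm}, and then Sobolev-embeds and interpolates to obtain the \emph{quantitative} inequality
\begin{equation*}
\mathcal{Z}(A)\lesssim \bigl\Vert\,\vert\nabla\vert^{\frac{n}{n+2}}P_{\ge A}u\bigr\Vert_{L^{\frac{2(n+2)}{n}}_{t,x}}^{\frac{n+2}{n+4}}\,\bigl\Vert P_{\ge A}u\bigr\Vert_{L^\infty L^2}^{\frac{2}{n+4}}\lesssim_u\mathcal{M}(A)^{\frac{2}{n+4}},
\end{equation*}
so that $\mathcal{Z}(A)\to 0$ is fed directly through \eqref{Hyp3}. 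Your soft argument---pointwise-in-$t$ $L^q_x$-convergence of Littlewood--Paley truncations plus dominated convergence in $t$---is also valid for the quantities in \eqref{ImportantQuantitiesAbs}, and it is more elementary: it needs only $u\in Z(I)$ rather than the $S^0(I)$-bound, and does not even consume \eqref{Hyp3} for this piece. What the paper's quantitative route buys is robustness: the bound $\mathcal{Z}(A)\lesssim_u\mathcal{M}(A)^{2/(n+4)}$ is uniform over subintervals and time scales, and so transfers verbatim to the places where this lemma is actually invoked---namely to $\mathcal{M},\mathcal{Z},\mathcal{N}$ carrying an additional supremum over $T>0$ (as in \eqref{ImportantQuantities}) or over short intervals (as in \eqref{DefMZN}). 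Your dominated-convergence argument gives no uniform rate in the extra parameter, so it would not carry over to those supremum variants without additional work; it is correct for the lemma exactly as stated.
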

\begin{proof}
Note that conservation of Mass, \eqref{LocalBoundForSNorm} and \eqref{BoundOnNonlinearTermByH} imply \eqref{Hyp2}. Similarly, Strichartz estimates with conservation of Mass for $u$ and \eqref{LocalBoundForSNorm} gives that
\begin{equation}\label{TrivialBound1ter}
\Vert u\Vert_{S^0(I)}\lesssim \Vert u\Vert_{L^\infty(I,L^2)}+\Vert F(u)\Vert_{N(I)}\lesssim 1.
\end{equation}
Using \eqref{TrivialBound1ter}, Sobolev's inequality and interpolation we get that
\begin{equation*}
\begin{split}
\mathcal{Z}(A)&\lesssim \Vert \vert\nabla\vert^\frac{n}{n+4} P_{\ge A}u\Vert_{L^\frac{2(n+4)}{n}([I,L^\frac{2(n+4)}{n+2})}\\
&\lesssim \Vert\vert\nabla\vert^\frac{n}{n+2}P_{\ge A}u\Vert_{L^\frac{2(n+2)}{n}(I,L^\frac{2(n+2)}{n})}^\frac{n+2}{n+4}\Vert P_{\ge A}u\Vert_{L^\infty(I,L^2)}^\frac{2}{n+4}\\
&\lesssim_u \mathcal{M}(A)^\frac{2}{n+4}.
\end{split}
\end{equation*}
While \eqref{Hyp2} and \eqref{NonlinearEstimates} give that
\begin{equation*}
\begin{split}
\mathcal{N}(A)&\lesssim_u \mathcal{Z}(A/8)^\frac{n+8}{n}+\mathcal{Z}(A/8)\left(\left(\sum_{N\le \sqrt{A}}+\sum_{\sqrt{A}\le N\le A}\right)\left(\frac{N}{A}\right)^\frac{n^2}{8(n+4)}\mathcal{Z}(N)\right)^\frac{8}{n}\\
&+\left(\left(\sum_{N\le \sqrt{A}}+\sum_{\sqrt{A}\le N\le A/8}\right)\left(\frac{N}{A}\right)^\frac{n\alpha}{n+8}\mathcal{Z}(N)\right)^\frac{n+8}{n}\\
&+\left(\left(\sum_{N\le \sqrt{A}}+\sum_{\sqrt{A}\le N\le A/8}\right)\left(\frac{N}{A}\right)\mathcal{Z}(N)\right)^\frac{7}{3}\\
&\lesssim_u \sup_{N\ge\sqrt{A}}\mathcal{Z}(N)\left(1+A^{-\frac{n}{2(n+4)}}\right)+A^{-\frac{\alpha}{2}}+A^{-\frac{7}{6}}.
\end{split}
\end{equation*} Estimate \eqref{Hyp3Stronger} is therefore a consequence of \eqref{Hyp3}.
\end{proof}
Our next result is a fundamental step allowing us to break the scaling under certain conditions.
\begin{lemma}\label{Lem2}
Suppose that $\mathcal{Z}$ and $\mathcal{N}$ are nonnegative functions satisfying \eqref{Hyp1}--\eqref{Hyp4}, \eqref{NonlinearEstimates} and \eqref{Hyp3Stronger}, then $\mathcal{Z}(A)\lesssim A^{-\frac{1}{4}}$.
\end{lemma}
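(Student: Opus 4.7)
The plan is a two-stage bootstrap: the first stage converts the qualitative decay \eqref{Hyp3Stronger} into a positive polynomial rate $\mathcal{Z}(A)\lesssim A^{-\sigma_0}$ with $\sigma_0>0$, and the second stage amplifies this rate up to $\tfrac{1}{4}$ using the superlinear gain already present in \eqref{NonlinearEstimates}. To set up, let $\Psi(B):=\sup_{N\ge B}\mathcal{Z}(N)$, which is nonincreasing and tends to $0$ by \eqref{Hyp3Stronger}. Fix $\theta\in(0,1)$ and split each of the three sums $\sum_{N\le A/8}(N/A)^{\beta}\mathcal{Z}(N)$ in \eqref{NonlinearEstimates} at $N=A^{\theta}$. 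Use $\mathcal{Z}\lesssim 1$ from \eqref{Hyp2} on the low piece $N\le A^{\theta}$ to produce the factor $A^{-\beta(1-\theta)}$, and $\mathcal{Z}(N)\le \Psi(A^{\theta})$ on the high piece; distribute the outer powers by $(a+b)^{p}\le 2^{p}(a^{p}+b^{p})$. Feeding the resulting bound on $\mathcal{N}$ into \eqref{Hyp4} and taking the supremum over $A\ge B$, one obtains
\begin{equation*}
\Psi(B)\lesssim \Psi(B^{\theta})^{\nu}+B^{-\mu(\theta)}, \qquad \nu:=\min\Bigl(\tfrac{n+8}{n},\tfrac{7}{3}\Bigr),\quad \mu(\theta):=\tfrac{n(1-\theta)}{n+4},
\end{equation*}
for $B$ large (one checks $\mu(\theta)\le\tfrac{1}{2}$ whenever $\theta\ge(n-4)/(2n)$, which absorbs the $A^{-1/2}$ of \eqref{Hyp4}). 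Note $\nu>1$ for every $n\ge 5$.

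For the first bootstrap, I would choose $\theta$ with $\theta\nu>1$ (possible since $\nu>1$) and iterate along $B_{k+1}:=B_{k}^{1/\theta}$, starting at $B_{0}$ so large that $\epsilon_{0}:=\Psi(B_{0})$ is small, both possible by \eqref{Hyp3Stronger}. The functional inequality gives $\epsilon_{k+1}\lesssim \epsilon_{k}^{\nu}+B_{k+1}^{-\mu(\theta)}$. While $\epsilon_{k}^{\nu}\ge B_{k+1}^{-\mu(\theta)}$ the iteration behaves super-exponentially; comparing logarithms with $\nu>1/\theta$ shows this regime cannot persist, so after finitely many steps one enters the complementary polynomial regime $\epsilon_{k}\lesssim B_{k}^{-\mu(\theta)}$. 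This regime is self-sustaining because $\nu\theta>1$ gives $B_{k}^{-\mu(\theta)\nu}=B_{k+1}^{-\mu(\theta)\nu\theta}\ll B_{k+1}^{-\mu(\theta)}$, so the constant in the bound does not blow up. Using monotonicity of $\Psi$ together with $B_{k}\ge A^{\theta}$ for $A\in[B_{k},B_{k+1}]$ yields
\begin{equation*}
\mathcal{Z}(A)\le\Psi(A)\lesssim A^{-\sigma_{0}},\qquad \sigma_{0}:=\theta\mu(\theta)>0,
\end{equation*}
for all $A$ sufficiently large.

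For the second bootstrap, I would assume inductively $\mathcal{Z}(N)\lesssim N^{-\sigma}$ for some $\sigma\in(0,\tfrac{1}{4}]$ and all large $N$, and apply \eqref{NonlinearEstimates} without splitting. Since $\sigma\le\tfrac{1}{4}<\min\bigl(\tfrac{n^{2}}{8(n+4)},\tfrac{n\alpha}{n+8},1\bigr)$ for all $n\ge 5$, the direct estimate $\sum_{N\le A}(N/A)^{\beta}N^{-\sigma}\lesssim A^{-\sigma}$ applies to each sum, collapsing \eqref{NonlinearEstimates} to $\mathcal{N}(A)\lesssim A^{-\kappa\sigma}$ with $\kappa:=\min\bigl(\tfrac{n+8}{n},\tfrac{7}{3}\bigr)>1$. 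Combining with \eqref{Hyp4} gives $\mathcal{Z}(A)\lesssim A^{-\min(1/2,\kappa\sigma)}$; since $\kappa>1$, the sequence $\sigma_{j+1}=\min(\tfrac{1}{4},\kappa\sigma_{j})$ starting from $\sigma_{0}>0$ reaches $\tfrac{1}{4}$ in finitely many steps.

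The main obstacle is the first bootstrap: extracting a quantitative polynomial rate from merely qualitative decay. The key ingredient is the exponent $\nu>1$ in \eqref{NonlinearEstimates}, which makes the strict condition $\theta\nu>1$ achievable for some $\theta<1$; this is precisely what forces the super-exponential regime of the iteration to spill into the polynomial regime along the doubly-exponential checkpoint sequence $B_{k+1}=B_{k}^{1/\theta}$. The second bootstrap is then routine, and in fact delivers the stronger bound $\mathcal{Z}(A)\lesssim A^{-1/2}$ if one wishes to push the argument.
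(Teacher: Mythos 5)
Your proof is correct, and it takes a genuinely different route from the paper's. You replace $\mathcal{Z}$ by its tail-supremum $\Psi(B)=\sup_{N\ge B}\mathcal{Z}(N)$, derive the self-improving functional inequality $\Psi(B)\lesssim\Psi(B^{\theta})^{\nu}+B^{-\mu(\theta)}$, and then work along the doubly-exponentially spaced checkpoints $B_{k+1}=B_k^{1/\theta}$; the conversion from the qualitative decay \eqref{Hyp3Stronger} into a polynomial rate is accomplished by showing that the super-exponential iteration $\epsilon_{k+1}\lesssim\epsilon_k^{\nu}$ must eventually be overtaken by the doubly-exponential decay of $B_{k+1}^{-\mu}$, which hinges precisely on $\nu\theta>1$; a routine bootstrap then amplifies the rate to $1/4$. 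The paper instead stays at dyadic scale: it first establishes the one-step recursion \eqref{QTDE} (the small prefactor $\eta$ on $\mathcal{Z}(A/16)$ being supplied by \eqref{Hyp3Stronger}), iterates it a number of times $\sim\log A$ so as to expunge the linear $\mathcal{Z}(A/16)$ dependence entirely, arriving at \eqref{IntermediaryEstimateQTDE}, and finally runs a single induction on the rescaled quantity $c_j=2^{j/4}\mathcal{Z}(2^jA)$, showing $c_j\le\delta$ for all $j$. Both arguments rest on the same core mechanism --- the strictly superlinear exponent $\nu=\min\bigl(\tfrac{n+8}{n},\tfrac{7}{3}\bigr)>1$ together with a split of the dyadic sums at an intermediate frequency $A^{\theta}$ (the paper uses $A^{\beta}$) --- but they package it differently. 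Your two-stage scheme is conceptually transparent, isolating the roles of $\nu>1$ and of \eqref{Hyp3Stronger}, at the cost of the slightly more delicate bookkeeping in the super-exponential versus doubly-exponential comparison and in propagating constants along the $B_k$ sequence; the paper's single $c_j$ induction is more compact and goes straight to the exponent $1/4$ once \eqref{IntermediaryEstimateQTDE} is in hand.
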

\begin{proof}
First we claim that the following bound holds true. Let $0<\eta<1$, $\alpha=2(n+5)/(n+4)$. Then, if $A$ is sufficiently large depending on $u$ and $\eta$,
\begin{equation}\label{QTDE}\\
\begin{split}
\mathcal{Z}(A)&\le  \eta\mathcal{Z}(A/16)+KA^{-\frac{1}{2}}\\
&+K\left(\left(\sum_{N\le A/8}\left(\frac{N}{A}\right)^\frac{n\alpha}{n+8}\mathcal{Z}(N)\right)^\frac{n+8}{n}+\left(\sum_{N\le A/8}\left(\frac{N}{A}\right)\mathcal{Z}(N)\right)^\frac{7}{3}\right)
\end{split}
\end{equation}
for some constant $K>0$ independent of $\eta>0$. Let $0<\beta<1$. Using \eqref{Hyp4} then \eqref{Hyp2} and \eqref{NonlinearEstimates}, then \eqref{Hyp3Stronger}, we remark that
\begin{equation*}
\begin{split}
&\mathcal{Z}(A)\\
&\lesssim A^{-\frac{1}{2}}+\mathcal{N}(A/2)\\
&\lesssim A^{-\frac{1}{2}}+\mathcal{Z}(A/16)\left(\mathcal{Z}(A/16)^\frac{8}{n}+\left(\sum_{N\le A^\beta}\left(\frac{N}{A}\right)^\frac{n^2}{8(n+4)}\right)^\frac{8}{n}+\mathcal{Z}(A^\beta)^\frac{8}{n}\right)\\
&+\left(\sum_{N\le A/8}\left(\frac{N}{A}\right)^\frac{n\alpha}{n+8}\mathcal{Z}(N)\right)^\frac{n+8}{n}+\left(\sum_{N\le A/8}\left(\frac{N}{A}\right)\mathcal{Z}(N)\right)^\frac{7}{3}\\
&\lesssim A^{-\frac{1}{2}}+\eta\mathcal{Z}(A/16)+\left(\sum_{N\le A/8}\left(\frac{N}{A}\right)^\frac{n\alpha}{n+8}\mathcal{Z}(N)\right)^\frac{n+8}{n}+\left(\sum_{N\le A/8}\left(\frac{N}{A}\right)\mathcal{Z}(N)\right)^\frac{7}{3}\\
\end{split}
\end{equation*}
provided that $A$ is chosen sufficiently large. Since the constant in the inequality above does not depend on $\eta$, we get \eqref{QTDE}.

We know that there exists $K>1$ and $A_0$ such that \eqref{QTDE} holds for $\eta=1/(2^{5\alpha}K)$ and $A\ge A_0$. Iterating \eqref{QTDE} and using \eqref{Hyp2}, we get that, whenever $p$ satisfies $2^{-4p}A\ge A_0$, there holds that
\begin{equation*}
\begin{split}
&\mathcal{Z}(A)\\
&\le K\left(\eta\mathcal{Z}(A/16)+A^{-\frac{1}{2}}+\left(\sum_{N\le A/8}\left(\frac{N}{A}\right)^\frac{n\alpha}{n+8}\mathcal{Z}(N)\right)^\frac{n+8}{n}+\left(\sum_{N\le A/8}\frac{N}{A}\mathcal{Z}(N)\right)^\frac{7}{3}\right)\\
&\le \left(K\eta\right)^2\mathcal{Z}(2^{-8}A)+K(1+4K\eta)A^{-\frac{1}{2}}\\
&+K(1+2^{4\alpha}K\eta)\left(\left(\sum_{N\le A/8}\left(\frac{N}{A}\right)^\frac{n\alpha}{n+8}\mathcal{Z}(N)\right)^\frac{n+8}{n}+\left(\sum_{N\le A/8}\frac{N}{A}\mathcal{Z}(N)\right)^\frac{7}{3}\right)\\
&\le \left(K\eta\right)^p\mathcal{Z}(2^{-4p}A)+A^{-\frac{1}{2}}K\sum_{j=0}^p(4K\eta)^j\\
&+K\left(\sum_{j=0}^p\left(2^{4\alpha}K\eta\right)^j\right)\left(\left(\sum_{N\le A/8}\left(\frac{N}{A}\right)^\frac{n\alpha}{n+8}\mathcal{Z}(N)\right)^\frac{n+8}{n}+\left(\sum_{N\le A/8}\frac{N}{A}\mathcal{Z}(N)\right)^\frac{7}{3}\right)\\
&\lesssim \left(K\eta\right)^p+A^{-\frac{1}{2}}+\left(\sum_{N\le A/8}\left(\frac{N}{A}\right)^\frac{n\alpha}{n+8}\mathcal{Z}(N)\right)^\frac{n+8}{n}+\left(\sum_{N\le A/8}\frac{N}{A}\mathcal{Z}(N)\right)^\frac{7}{3},
\end{split}
\end{equation*}
where the constant in the last inequality is independent of $p$. In particular, when $2^{-4p}A\in [A_0,2A_0]$, the first term is bounded by $(2A_0)^\frac{5\alpha}{4}A^{-\frac{5\alpha}{4}}$. Hence, there exists a constant $k$ such that
\begin{equation}\label{IntermediaryEstimateQTDE}
\begin{split}
&\mathcal{Z}(A)\\
&\le k\left(A^{-\frac{1}{2}}+\left(\sum_{N\le A/8}\left(\frac{N}{A}\right)^\frac{n\alpha}{n+8}\mathcal{Z}(N)\right)^\frac{n+8}{n}+\left(\sum_{N\le A/8}\frac{N}{A}\mathcal{Z}(N)\right)^\frac{7}{3}\right)
\end{split}
\end{equation}
for all $A\ge A_0$. Now, fix $0<\delta<1$ small and let $A\ge A_0$ be a dyadic number sufficiently large so that $\mathcal{Z}(\sqrt{A})<\delta/2$, and $kA^{-\frac{1}{4}}<\delta$, and for $j\ge 0$, define $c_j=2^\frac{j}{4}\mathcal{Z}(2^jA)$. We claim that
\begin{equation}\label{finalClaimQTDE}
\forall j\ge 0, \hskip.1cm 0\le c_j\le \delta,\hskip.1cm\hbox{and}\hskip.1cm c_j\to 0\hskip.1cm\hbox{as}\hskip.1cmj\to +\infty.
\end{equation}
Note that this implies that $\mathcal{Z}(B)\lesssim B^{-\frac{1}{4}}$.
In order to prove \eqref{finalClaimQTDE}, we first remark that the claim holds for $c_0$, and, assuming the claims holds for $c_k$, $k\le j$, we get, using \eqref{Hyp2} that
\begin{equation}\label{IntemediaryEstimateBis}
\begin{split}
&\left(\sum_{N\le 2^{j-2}A}\left(\frac{N}{2^{j+1}A}\right)^\frac{n\alpha}{n+8}\mathcal{Z}(N)\right)^\frac{n+8}{n}\\
&\lesssim_u \left(\sum_{N\le \sqrt{A}}\left(\frac{N}{2^{j+1}A}\right)^\frac{n\alpha}{n+8}\right)^\frac{n+8}{n}+\left(\sum_{\sqrt{A}\le N\le A}\left(\frac{N}{2^{j+1}A}\right)^\frac{n\alpha}{n+8}\delta\right)^\frac{n+8}{n}\\
&+\left(\sum_{p=0}^{j-2}2^{(p-j-1)\frac{n\alpha}{n+8}}2^{-\frac{p}{4}}c_p\right)^\frac{n+8}{n}\\
&\lesssim A^{-\frac{\alpha}{2}}2^{-(j+1)\alpha}+2^{-(j+1)\alpha}\delta^\frac{n+8}{n}+2^{-\frac{(j+1)(n+8)}{4n}}\delta^\frac{n+8}{n}\\
&\le \delta^\frac{n+6}{n}2^{-\frac{j}{4}\frac{n+8}{n}}
\end{split}
\end{equation}
provided that $\delta>0$ is chosen sufficiently small so that $3\delta^\frac{2}{n}k_u<1$ where $k_u$ is the (universal) constant in the second inequality, and $A$ sufficiently large so that $A^{-\frac{\alpha}{2}}\le \delta^\frac{n+8}{n}$. Proceeding similarly, we also obtain that
\begin{equation}\label{IntemediaryEstimateTer}
\begin{split}
\left(\sum_{N\le 2^{j-2}A}\frac{N}{2^{j+1}A}\mathcal{Z}(N)\right)^\frac{7}{3}
\le \delta^2 2^{-\frac{j+1}{4}\frac{7}{3}}
\end{split}
\end{equation}
provided that $A$ is sufficiently large and $\delta$ is sufficiently small. Finally,
using \eqref{IntermediaryEstimateQTDE}, \eqref{IntemediaryEstimateBis} and \eqref{IntemediaryEstimateTer}, we conclude that
\begin{equation*}
\begin{split}
c_{j+1}&\le k2^\frac{j+1}{4}\left(2^{j+1}A\right)^{-\frac{1}{2}}+k\delta^\frac{n+6}{n}2^{-\frac{2j}{n}}+k\delta^22^{-j-1}\\
&\le \delta\left(A^{-\frac{1}{4}}+k\delta^\frac{6}{n}2+k\delta\right)2^{-\frac{j}{4n}}
\end{split}
\end{equation*}
hence, if $\delta>0$ is chosen sufficiently small, we see that $c_{j+1}\le\delta$, and that $c_{j+1}\to 0$ as $j\to +\infty$. This concludes the proof of Claim \eqref{finalClaimQTDE}, and hence the proof of Lemma \ref{Lem2}.
\end{proof}

Finally, we complete this section with a bootstrap argument that proves that whenever the scaling is broken in the sense that we have a small nonzero decay in $\mathcal{Z}$, the small decay is automatically upgraded to a stronger decay, and ultimately gives us the gain of two derivatives we are looking for.

\begin{lemma}\label{Lem3}
Suppose hypothesis \eqref{Hyp1}, \eqref{Hyp4}, \eqref{Hyp5} and \eqref{ImportantQuantitiesAbs} hold and that
$\mathcal{Z}$ satisfies $\mathcal{Z}(A)\lesssim A^{-\sigma_0}$ for some $\sigma_0>0$ and all dyadic $A$ sufficiently large. Then there holds that
\begin{equation}\label{GainOfRegularity}
\mathcal{M}(A)\lesssim A^{-\frac{7}{3}}+A^{-\alpha}.
\end{equation}
In particular, $\mathcal{M}(A)\lesssim A^{-2-\frac{1}{n+4}}$.
\end{lemma}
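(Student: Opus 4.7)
The proof is by bootstrap. Starting from $\mathcal{Z}(A) \lesssim A^{-\sigma_0}$, I would feed the current decay rate into \eqref{NonlinearEstimates} to obtain a better bound on $\mathcal{N}$, then recycle this bound back to $\mathcal{Z}$. The crucial feature is that the saturation of \eqref{Hyp4} at $A^{-1/2}$ is bypassed by going through $\mathcal{M}$: combining \eqref{Hyp5} with \eqref{Hyp1}, any decay rate below $A^{-5/2}$ for $\mathcal{N}$ transfers to $\mathcal{Z}$. Since the target rates $\alpha$ and $7/3$ are both strictly less than $5/2$ for $n\ge 5$, this route is unobstructed.

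\textbf{Summation lemma.} The computational workhorse is the estimate that for any $p>0$,
\[
\sum_{N \le A/8,\ N \text{ dyadic}} \bigl(N/A\bigr)^{p}\, \mathcal{Z}(N) \lesssim A^{-\min(\sigma,p)}
\]
(up to a logarithmic factor if $\sigma=p$), which I would prove by splitting the sum at a fixed cutoff $N_0$: the low-frequency part is $\lesssim A^{-p}$ using the uniform boundedness of $\mathcal{Z}$, while the high-frequency tail is a geometric series with ratio $2^{\sigma-p}$, summing to $A^{-\sigma}$ when $\sigma<p$ and to $A^{-p}$ when $\sigma>p$.

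\textbf{Bootstrap.} Plugging this into each of the four terms in \eqref{NonlinearEstimates} yields the implication
\[
\mathcal{Z}(A) \lesssim A^{-\sigma}\ \Longrightarrow\ \mathcal{N}(A) \lesssim A^{-\Phi(\sigma)},
\]
where
\[
\Phi(\sigma)=\min\Bigl\{\tfrac{n+8}{n}\sigma,\ \min\bigl(\tfrac{n+8}{n}\sigma,\alpha\bigr),\ \tfrac{7}{3}\min(\sigma,1),\ \sigma+\tfrac{8}{n}\min\bigl(\sigma,\tfrac{n^{2}}{8(n+4)}\bigr)\Bigr\}.
\]
For small $\sigma$ this reduces to $\Phi(\sigma)=c\sigma$ with $c=\min\bigl(\tfrac{n+8}{n},\tfrac{7}{3}\bigr)>1$, so $\sigma_{k}$ grows geometrically under iteration. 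Once $\sigma_{k}\ge \max(1,n\alpha/(n+8))$ the saturating minima engage and $\Phi$ jumps to exactly $\min(\alpha,7/3)$. Each iteration step is completed by applying \eqref{Hyp5} to obtain $\mathcal{M}(A)\lesssim A^{-\Phi(\sigma_{k})}+A^{-5/2}$ and then \eqref{Hyp1} to obtain
\[
\mathcal{Z}(A)\lesssim A^{-\min(\Phi(\sigma_{k}),\,5/2)}.
\]
After finitely many iterations we reach $\sigma_{k}=\min(\alpha,7/3)$.

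\textbf{Conclusion and obstacle.} A final application of \eqref{NonlinearEstimates} and \eqref{Hyp5} at the saturated rate produces $\mathcal{M}(A)\lesssim A^{-\min(\alpha,7/3)}+A^{-5/2}\lesssim A^{-7/3}+A^{-\alpha}$, which is \eqref{GainOfRegularity}; the in-particular clause then follows from $\alpha=2+2/(n+4)>2+1/(n+4)$ and $7/3>2+1/(n+4)$, both valid for $n\ge 5$. The main technical obstacle is the case analysis inside $\Phi$: the four terms of \eqref{NonlinearEstimates} change behaviour as $\sigma$ crosses the thresholds $n^{2}/(8(n+4))$, $n\alpha/(n+8)$, and $1$, and one must verify that the dominant term has strictly smaller exponent than $\sigma_{k}$ at each step, so that the iteration truly progresses, and that it locks on to exactly $\min(\alpha,7/3)$ rather than some intermediate value. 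Logarithmic losses at the critical exponents $\sigma=p$ are neutralised by perturbing the bootstrap exponents by arbitrarily small amounts at each step.
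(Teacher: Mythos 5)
Your proposal follows the same route as the paper's proof. The paper states and iterates the single claim
\[
\mathcal{Z}(A)\lesssim A^{-\sigma}\ \Longrightarrow\ \mathcal{Z}(A)+\mathcal{N}(A)+\mathcal{M}(A)\lesssim A^{-\frac{n+8}{n}\sigma}+A^{-\sigma-\frac{n}{n+4}}+A^{-\alpha}+A^{-\frac{7\sigma}{3}}+A^{-\frac{7}{3}},
\]
obtained exactly as you propose: feed $\mathcal{Z}(N)\lesssim\min(1,N^{-\sigma})$ into \eqref{NonlinearEstimates} (your summation lemma, the paper's $\min(1,N^{-\sigma})$ bookkeeping), then pass $\mathcal{N}\to\mathcal{M}$ via \eqref{Hyp5} and $\mathcal{M}\to\mathcal{Z}$ via \eqref{Hyp1}, noting as you do that $\min(\alpha,7/3)<5/2$ so the $A^{-5/2}$ saturation in \eqref{Hyp5} never binds. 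Your $\Phi$-function, threshold analysis, and perturbation-of-exponents remark make explicit what the paper compresses into ``iterating the claim a finite number of times''; the substance is identical.
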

\begin{proof}
This is a consequence of the following claim: suppose that $\mathcal{Z}(A)\lesssim A^{-\sigma}$ for some $\sigma>0$, then
$$\mathcal{Z}(A)+\mathcal{N}(A)+\mathcal{M}(A)\lesssim A^{-\frac{n+8}{n}\sigma}+A^{-\sigma-\frac{n}{n+4}}+A^{-\alpha}+A^{-\frac{7}{3}}+A^{-\frac{7\sigma}{3}}.$$
We first prove the claim. Using \eqref{NonlinearEstimates}, we get that
\begin{equation*}
\begin{split}
\mathcal{N}(A)&\lesssim A^{-\frac{n+8}{n}\sigma}+A^{-\sigma-\frac{n}{n+4}}\left(\sum_{N\le A}N^\frac{n^2}{8(n+4)}\min(1,N^{-\sigma})\right)^\frac{8}{n}\\
&+A^{-\alpha}\left(\sum_{N\le A}N^\frac{n\alpha}{n+8}\min(1,N^{-\sigma})\right)^\frac{n+8}{n}+A^{-\frac{7}{3}}\left(\sum_{N\le A}\min(N,N^{1-\sigma})\right)^\frac{7}{3}\\
&\lesssim A^{-\frac{n+8}{n}\sigma}+A^{-\sigma-\frac{n}{n+4}}+A^{-\alpha}+A^{-\frac{7\sigma}{3}}+A^{-\frac{7}{3}}.
\end{split}
\end{equation*}
Then, using \eqref{Hyp5} and the inequality above, we get that
\begin{equation*}
\begin{split}
\mathcal{M}(A)\lesssim A^{-\frac{n+8}{n}\sigma}+A^{-\sigma-\frac{n}{n+4}}+A^{-\alpha}+A^{-\frac{7\sigma}{3}}+A^{-\frac{7}{3}}
\end{split}
\end{equation*}
and finally, combining the two estimates above with \eqref{Hyp1}, we get that
$$\mathcal{Z}(A)\lesssim A^{-\frac{n+8}{n}\sigma}+A^{-\sigma-\frac{n}{n+4}}+A^{-\alpha}+A^{-\frac{7\sigma}{3}}+A^{-\frac{7}{3}}$$
which concludes the proof of the claim. Iterating the claim a finite number of times, starting with $\sigma=\sigma_0>0$, we finish the proof of Lemma \ref{Lem3}.
\end{proof}

Combining Lemma \ref{Lem2} and \ref{Lem3}, we get the following result of gaining $2$ derivatives.
\begin{corollary}\label{GainOfRegCor}
Assuming \eqref{Hyp1}-\eqref{ImportantQuantitiesAbs}, we get that $\mathcal{M}(A)\lesssim A^{-2-\frac{1}{n+4}}$.
\end{corollary}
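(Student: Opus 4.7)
The plan is to chain together the two preceding lemmas, which together constitute essentially the entire content of the corollary: Lemma \ref{Lem2} upgrades the qualitative smallness of $\mathcal{M}$ into the quantitative decay $\mathcal{Z}(A)\lesssim A^{-1/4}$, and then Lemma \ref{Lem3} turns any polynomial decay of $\mathcal{Z}$ into the promised two-derivative gain on $\mathcal{M}$.

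First I would verify that the hypotheses of Lemma \ref{Lem2} are all in hand. The abstract bounds \eqref{Hyp1}--\eqref{Hyp4} are part of the standing assumptions, while the nonlinear bound \eqref{NonlinearEstimates} is provided by Lemma \ref{Lem1} since $\mathcal{M},\mathcal{Z},\mathcal{N}$ are given by the specific form \eqref{ImportantQuantitiesAbs}. The strengthened qualitative decay \eqref{Hyp3Stronger} then follows from Lemma \ref{Lem1bis}; the auxiliary hypothesis \eqref{LocalBoundForSNorm} that this lemma needs is either among the standing assumptions of the corollary or is available because in each of the three scenarios of Theorem \ref{3ScenariosThm} Lemma \ref{le:Strichartz-Via-N} gives $\int_I N(t)^4\,dt\lesssim_u 1$ on the compact sub-intervals on which the corollary is used. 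With \eqref{Hyp3Stronger} secured, Lemma \ref{Lem2} immediately produces $\mathcal{Z}(A)\lesssim A^{-1/4}$ for all sufficiently large dyadic $A$.

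Second, I would feed this quantitative decay into Lemma \ref{Lem3} with $\sigma_0 = 1/4>0$. Hypotheses \eqref{Hyp1}, \eqref{Hyp4}, \eqref{Hyp5} and the concrete form \eqref{ImportantQuantitiesAbs} are unchanged, so the lemma applies and yields
\begin{equation*}
\mathcal{M}(A) \lesssim A^{-7/3} + A^{-\alpha}, \qquad \alpha = \frac{2(n+5)}{n+4} = 2+\frac{2}{n+4}.
\end{equation*}
For $n\ge 5$ we have $n+4\ge 9$, hence $2+\tfrac{1}{n+4}\le 2+\tfrac{1}{9} < \tfrac{7}{3}$, and obviously $2+\tfrac{1}{n+4} < \alpha$; therefore both terms on the right are bounded by $A^{-2-1/(n+4)}$ for $A\ge 1$, which gives the claim.

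There is essentially no obstacle here beyond bookkeeping: the entire content of the corollary is the observation that Lemmas \ref{Lem2} and \ref{Lem3} compose, with $\sigma_0 = 1/4$ as the hand-off. The only mildly delicate point is the verification that \eqref{Hyp3Stronger} (and hence \eqref{LocalBoundForSNorm}) really is available whenever the corollary is invoked; this is a question of the precompactness and scenario information rather than a new analytical estimate.
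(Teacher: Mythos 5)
Your proposal matches the paper's proof exactly: the corollary is obtained by chaining Lemma \ref{Lem2} (which yields $\mathcal{Z}(A)\lesssim A^{-1/4}$ once \eqref{Hyp3Stronger} is supplied via Lemmas \ref{Lem1} and \ref{Lem1bis}) into Lemma \ref{Lem3} with $\sigma_0=1/4$, and then observing that $\min(7/3,\alpha)\ge 2+\tfrac{1}{n+4}$. You also correctly flag the one genuine subtlety — that \eqref{Hyp3Stronger} (hence \eqref{LocalBoundForSNorm}) is not literally among the equations \eqref{Hyp1}–\eqref{ImportantQuantitiesAbs} and must be verified at each invocation — which the paper does implicitly in Sections \ref{Sec-Case1} and \ref{Sec-Case2} before applying the corollary.
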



\section{Proof of Theorem \ref{MainThm}}\label{Sec-Proof}

In this section, we prove Theorem \ref{MainThm} after excluding all the possible scenarios in Theorem \ref{3ScenariosThm}, thanks to the analysis developed in Section \ref{Sec-Abs} above.

\subsection{The case of the Self-Similar solution}\label{Sec-Case1}

We start by excluding the case of a Self-Similar solution. In this favorable situation, we are able to work in full generality (i.e. in all dimensions, in the focusing and defocusing case).
\begin{proposition}[No Blow-up in finite time]\label{nonexistenceOfSelfSimilarBlowUp}
Let $n\ge 1$. Suppose that there exists a solution of \eqref{4NLS}, $u\in S^0(I)$ such that $I=(0,+\infty)$, $N(t)=t^{-\frac{1}{4}}$ and \eqref{CompactnessImage} holds true, then $M(u)\ge M(Q)$, and $\lambda<0$. In particular, there can be no Self-similar solution in the defocusing case.
\end{proposition}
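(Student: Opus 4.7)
The plan is to apply the abstract framework of Section~\ref{Sec-Abs} to the self-similar solution $u$, gain $H^2$-regularity, and then use conservation of energy together with the sharp Gagliardo-Nirenberg inequality~\eqref{Gag} to derive a contradiction. Define $\mathcal{M}$, $\mathcal{Z}$, $\mathcal{N}$ as in~\eqref{ImportantQuantitiesAbs} with $I=(0,+\infty)$. Hypothesis~\eqref{Hyp1} is just Strichartz~\eqref{StricEst}; \eqref{Hyp2} follows from conservation of mass and the bound~\eqref{BoundOnNonlinearTermByH} after noting that on any compact subinterval $J\subset I$ one has $\int_J N(t)^4\,dt<\infty$; and~\eqref{Hyp3} follows from the $L^2$-precompactness of $\{v(t):t\in I\}$ together with the identity $\|P_{\ge A}u(t)\|_{L^2}=\|P_{\ge A/N(t)}v(t)\|_{L^2}$ and the fact that $A/N(t)\to+\infty$ uniformly as $t\to+\infty$.

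The heart of the argument is the vanishing of the linear part of the Duhamel formula as $t_0\to 0^+$, which will produce both~\eqref{Hyp4} and~\eqref{Hyp5}. For $0<t_0<t$,
\begin{equation*}
P_{\ge A}u(t)=e^{i(t-t_0)\Delta^2}P_{\ge A}u(t_0)+i\lambda\int_{t_0}^{t}e^{i(t-s)\Delta^2}P_{\ge A}F(u)(s)\,ds,
\end{equation*}
the nonlinear integral being bounded in the $Z$-norm by $\mathcal{N}(A/2)$ via~\eqref{StricEst}. For the linear term we exploit the rescaling identity
\begin{equation*}
e^{i(t-t_0)\Delta^2}u(t_0)=g_{(N(t_0),y(t_0))}\bigl(e^{iN(t_0)^4(t-t_0)\Delta^2}v(t_0)\bigr),
\end{equation*}
so that, by invariance of the $Z$-norm under $\tau$, for any $T>0$,
\begin{equation*}
\|e^{i(t-t_0)\Delta^2}P_{\ge A}u(t_0)\|_{Z([T,\infty))}=\|e^{i\tau\Delta^2}P_{\ge A/N(t_0)}v(t_0)\|_{Z([N(t_0)^4(T-t_0),\infty))}.
\end{equation*}
Since $N(t_0)=t_0^{-1/4}\to+\infty$ as $t_0\to 0^+$, the integration interval on the right escapes to $+\infty$, and the precompactness of $\{v(t_0)\}$ in $L^2$ combined with the global Strichartz bound forces the right-hand side to $0$ uniformly. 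Quantifying this decay via the refined Strichartz estimate~\eqref{eq:refinement-strichartz-2} and the dispersive bound~\eqref{DecayEstimate} (integrable in time when $n\ge 5$) yields~\eqref{Hyp4}; passing to the limit $t_0\to 0^+$ in the $L^\infty(I,L^2)$-norm of the Duhamel identity then gives~\eqref{Hyp5}.

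Applying Corollary~\ref{GainOfRegCor} we conclude that $\mathcal{M}(A)\lesssim A^{-2-1/(n+4)}$, so $u(t)\in H^2$ uniformly in $t\in I$ and $\{v(t)\}$ is precompact in $H^2$. Combined with the rescaling identity $\|\Delta u(t)\|_{L^2}=N(t)^2\|\Delta v(t)\|_{L^2}$, this yields
\begin{equation*}
\|\Delta u(t)\|_{L^2}\lesssim N(t)^2=t^{-1/2}\xrightarrow[t\to+\infty]{}0.
\end{equation*}
By~\eqref{Gag} the nonlinear part of the energy also tends to zero, and conservation of energy forces $E(u)=0$. In the defocusing case this immediately gives $\|\Delta u\|_{L^2}\equiv 0$ and hence $u\equiv 0$, contradicting $\|u\|_{Z(I)}=+\infty$. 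In the focusing case, inserting $E(u)=0$ into~\eqref{Gag} gives
\begin{equation*}
0=E(u)\ge\tfrac{1}{2}\|\Delta u(t)\|_{L^2}^2\left(1-\left(\tfrac{M(u)}{M(Q)}\right)^{8/n}\right),
\end{equation*}
which forces either $u\equiv 0$ (same contradiction) or $M(u)\ge M(Q)$, completing the proof.

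The main obstacle is the quantitative verification of~\eqref{Hyp4} and~\eqref{Hyp5}: one must combine the frequency-localized rescaling above with refined Strichartz and the integrable dispersive decay~\eqref{DecayEstimate2}, which is precisely where the condition $n\ge 5$ becomes natural. Once these hypotheses are in hand, everything else is conservation laws and the sharp Gagliardo-Nirenberg inequality.
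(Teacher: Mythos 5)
Your high-level plan — gain $H^2$ regularity via the framework of Section~\ref{Sec-Abs}, then use conservation of energy and the sharp Gagliardo--Nirenberg inequality — is the right strategy and matches the paper. However, the crucial set-up step is wrong: you take $\mathcal{M},\mathcal{Z},\mathcal{N}$ directly from~\eqref{ImportantQuantitiesAbs} with $I=(0,+\infty)$ and the \emph{absolute} frequency cutoff $P_{\ge A}$. With this choice, hypotheses \eqref{Hyp2} and \eqref{Hyp3} simply fail for a self-similar solution. Indeed, as $t\to 0^+$ the concentration frequency $N(t)=t^{-1/4}\to+\infty$, so for every fixed $A$ one has $\|P_{\ge A}u(t)\|_{L^2}\to\sqrt{M(u)}$ as $t\to 0^+$, hence $\mathcal{M}(A)=\sqrt{M(u)}$ for all $A$ and \eqref{Hyp3} is false. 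Similarly $\int_0^1 N(t)^4\,dt=\int_0^1 t^{-1}\,dt=+\infty$, so $\|P_{\ge A}u\|_{Z((0,\infty))}=+\infty$ and \eqref{Hyp2} fails. The paper avoids this by defining the quantities with the \emph{rescaled} frequency cutoff $P_{\ge A T^{-1/4}}=P_{\ge A N(T)}$ and on the dyadic windows $[T,2T]$, as in \eqref{ImportantQuantities}; then $\mathcal{M}(A)=\sup_T\|P_{\ge A}v(T)\|_{L^2}$ really does go to zero by precompactness of $K$, and $\int_{[T,2T]}N(t)^4\,dt\simeq 1$ on each window. This rescaled choice is not a cosmetic difference; it is exactly what makes the abstract machinery applicable to the self-similar scenario.

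A second, independent error is the direction of the limit used to establish \eqref{Hyp5}. You propose ``passing to the limit $t_0\to 0^+$ in the $L^\infty(I,L^2)$-norm of the Duhamel identity.'' But the linear term $e^{i(t-t_0)\Delta^2}P_{\ge A}u(t_0)$ has $L^2$-norm $\|P_{\ge A}u(t_0)\|_{L^2}\to\sqrt{M(u)}$ as $t_0\to 0^+$ (and this remains true with the paper's rescaled cutoff $P_{\ge AT^{-1/4}}$, since $AT^{-1/4}/N(t_0)\to 0$). So the linear contribution does \emph{not} vanish in that limit. The paper's Lemma~\ref{DecayInMass} instead sends the initial time $S\to +\infty$, where $\|P_{\ge AT^{-1/4}}u(S)\|_{L^2}=\|P_{\ge A(S/T)^{1/4}}v(S)\|_{L^2}\to 0$ because $N(S)\to 0$. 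That is the only direction in which the linear part vanishes in $L^2$. (For \eqref{Hyp4}, the paper's Lemma~\ref{QTD} does send a parameter $\varepsilon\to 0^+$, but only after first going to the Duhamel formula with data at $T/2$, and the vanishing there happens for the single Littlewood--Paley piece $P_{BT^{-1/4}}u(\varepsilon)$, not for $P_{\ge A}u(\varepsilon)$; the quantitative $A^{-1/2}$ decay then comes from a concrete dyadic-in-time dispersive calculation, which your sketch does not actually supply.) Once the frequency normalization and the direction of the limits are corrected, your final step — passing from $\mathcal{M}(A)\lesssim A^{-2-1/(n+4)}$ to $\|\Delta u(t)\|_{L^2}\lesssim N(t)^2\to 0$ and then $E(u)=0$, forcing $u=0$ (defocusing) or $M(u)\ge M(Q)$ (focusing) — is a valid alternative to the paper's argument, which instead invokes Proposition~\ref{LocExProp} to contradict finite-time blow-up directly.
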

Note in particular that we do not need $u$ to be radial. The proof will be the result of Lemmas \ref{QTD} and \ref{DecayInMass} below and Corollary \ref{GainOfRegCor}. Our approach is similar to the one in Killip, Tao and Visan \cite{KilTaoVis} and Killip, Visan and Zhang \cite{KilVisZha}.
We first need to introduce some more definitions. For $u$ satisfying the hypothesis of Proposition \ref{nonexistenceOfSelfSimilarBlowUp}, $F(z)=\lambda\vert z\vert^\frac{8}{n}z$, $v$ as in \eqref{CompactnessImage}  and $A$ a dyadic number, we let
\begin{equation}\label{ImportantQuantities}
\begin{split}
\mathcal{M}(A) &=\sup_{T>0}\Vert P_{\ge A}v(T)\Vert_{L^2}=\sup_{T>0}\Vert P_{\ge AT^{-\frac{1}{4}}}u(T)\Vert_{L^2}\\
\mathcal{Z}(A) &=\sup_{T>0}\Vert P_{\ge AT^{-\frac{1}{4}}}u\Vert_{Z([T,2T])}\\
\mathcal{N}(A) &=\sup_{T>0}\Vert P_{\ge AT^{-\frac{1}{4}}}F(u)\Vert_{N([T,2T])}.\\
\end{split}
\end{equation}
From compactness of $K$ in \eqref{CompactnessImage}, we get that
\begin{equation}\label{QDE2}
\lim_{A\to +\infty}\mathcal{M}(A)=\lim_{A\to+\infty}\sup_T\Vert P_{\ge AT^{-\frac{1}{4}}}u(T)\Vert_{L^2}\le \lim_{A\to+\infty}\sup_{v\in K}\Vert P_{\ge A}v\Vert_{L^2}=0.
\end{equation}
Hence \eqref{Hyp3} holds.
Strichartz estimates \eqref{StricEst} give \eqref{Hyp1}. By hypothesis,
\eqref{LocalBoundForSNorm} holds and we can use Lemma \ref{Lem1bis} to get \eqref{Hyp3Stronger},
\eqref{Hyp2}
and even, more generally that for all $T>0$,
\begin{equation}\label{TrivialBound1bis}
\Vert u\Vert_{S^0([T,2T])}\lesssim_u1.
\end{equation}
Furthermore, the estimate \eqref{NonlinearEstimates} is a consequence of Lemma \ref{Lem1} with the choice of $\mathcal{Z}$ and $\mathcal{N}$ in \eqref{ImportantQuantities}.

Now, we turn to the fundamental estimate of this subsection which gives us \eqref{Hyp4}.
\begin{lemma}\label{QTD}
Under the conditions above,
\begin{equation*}
\begin{split}
\mathcal{Z}(A)&\lesssim_u A^{-\frac{1}{2}}+\mathcal{N}(A/2)
\end{split}
\end{equation*}
for all dyadic $A$, i.e. \eqref{Hyp4} holds true.
\end{lemma}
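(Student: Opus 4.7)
The plan is to apply the Duhamel formula from a time $t_0 \in (0, T/2)$, send $t_0 \to 0^+$ so that the linear term becomes negligible, and combine Strichartz estimates with the enhanced dispersive decay of the fourth-order propagator to extract the $A^{-1/2}$ rate. Writing
\begin{equation*}
P_{\geq N} u(t) = e^{i(t-t_0)\Delta^2} P_{\geq N} u(t_0) - i\lambda \int_{t_0}^t e^{i(t-s)\Delta^2} P_{\geq N} F(u(s))\,ds
\end{equation*}
for $t \in [T, 2T]$ with $N = AT^{-1/4}$, the first step is to observe that the $Z([T, 2T])$-norm of the linear piece vanishes as $t_0 \to 0^+$. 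Via the intertwining identity $e^{i\tau\Delta^2} g_{(h,y)} = g_{(h,y)} e^{i\tau h^4\Delta^2}$ and scale invariance of $Z$, this norm equals the $Z$-norm of $e^{i\sigma\Delta^2} P_{\geq A(t_0/T)^{1/4}} v(t_0)$ on a time interval $[\sigma_1, \sigma_2]$ receding to $+\infty$; the global Strichartz bound and the precompactness of $K$ in $L^2$ force this to go to zero.

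Next, split the Duhamel integral at $s = T/2$. The near piece $\int_{T/2}^t$ is bounded via Strichartz on $[T/2, 2T]$ by $C\|P_{\geq N} F(u)\|_{N([T/2, 2T])} \leq C\mathcal{N}(A/2)$ (the factor $1/2$ arises from readjusting $N = AT^{-1/4}$ to the natural threshold on the sub-interval $[T/2, T]$ together with the monotonicity of $\mathcal{N}$). The far piece $\int_{t_0}^{T/2}$ can be rewritten, using the Duhamel identity at time $T/2$, as $e^{i(t-T/2)\Delta^2} P_{\geq N} u(T/2) - e^{i(t-t_0)\Delta^2} P_{\geq N} u(t_0)$ (up to a factor of $i\lambda$). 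The second term's contribution vanishes as $t_0 \to 0^+$ by the same scaling argument, so the problem reduces to bounding the single quantity $\|e^{i(t-T/2)\Delta^2} P_{\geq N} u(T/2)\|_{Z([T, 2T])}$ by $C_u A^{-1/2}$.

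This final bound is the crux of the argument and is where the $A^{-1/2}$ rate is produced, via dispersive decay at high frequency over the long time separation $t-T/2 \sim T$. The key relation $(t-T/2)N^4 \sim A^4 \gg 1$ puts us deep in the regime where the sharp decay estimate \eqref{DecayEstimate} gives $\|P_M e^{i\tau\Delta^2}\|_{L^1 \to L^\infty} \lesssim M^{-n}\tau^{-n/2}$ for all $M \geq N$. Combined with the spatial tightness of $u(T/2)$ at scale $T^{1/4}$ around $y(T/2)$ (inherited from precompactness of $K$), this controls the ``inner'' piece of $u(T/2)$ via interpolation with the trivial $L^2\to L^2$ estimate to reach the diagonal Strichartz norm $Z = L^{2(n+4)/n}_{t,x}$, followed by summation over dyadic frequencies $M \geq N$ through the Littlewood--Paley square function. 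The resulting bound is of order $A^{-4n/(n+4)}$, which dominates $A^{-1/2}$ since $4n/(n+4)\geq 1/2$ for all $n\geq 1$; the corresponding ``outer'' tail is controlled by Strichartz alone.

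The main obstacle I anticipate is upgrading the qualitative $L^2$-tightness from compactness of $K$ to an effective $L^1$ bound needed by the dispersive estimate. I would resolve this with an inner/outer splitting $u(T/2) = u^{\mathrm{in}} + u^{\mathrm{out}}$, where $u^{\mathrm{in}}$ is supported in a ball $B(y(T/2), R_u T^{1/4})$ and Cauchy--Schwarz on the support yields $\|P_M u^{\mathrm{in}}\|_{L^1} \lesssim R_u^{n/2} T^{n/8} \|P_M u^{\mathrm{in}}\|_{L^2}$ (valid for $M \geq N$ so that the kernel of $P_M$ is too narrow to broaden the support appreciably), while $\|u^{\mathrm{out}}\|_{L^2}$ can be made as small as desired by taking $R_u$ large. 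The outer piece's $Z([T,2T])$-contribution is controlled by Strichartz, and $R_u$ is fixed once and for all, depending only on $u$ via the tightness modulus of $K$, so that the resulting constants can be absorbed into the final bound.
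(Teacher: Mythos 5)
Your reduction (Duhamel from $t_0\to 0^+$, split at $s=T/2$, near piece $\lesssim\mathcal{N}(A/2)$, far piece collapsing to $e^{i(t-T/2)\Delta^2}P_{\ge N}u(T/2)$ after the $t_0$-term vanishes by the intertwining/scaling argument) is correct and is essentially equivalent to the paper's starting point. Your inner-piece estimate is also sound: for $M\ge N=AT^{-1/4}$ the dispersive estimate \eqref{DecayEstimate} interpolated with $L^2\to L^2$, plus Cauchy--Schwarz on a ball of radius $R_uT^{1/4}$ and the Littlewood--Paley square function, does give $\Vert e^{i(t-T/2)\Delta^2}P_{\ge N}u^{\mathrm{in}}\Vert_{Z([T,2T])}\lesssim A^{-4n/(n+4)}R_u^{2n/(n+4)}$, with the $T$-powers cancelling.

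The gap is in the outer piece. Strichartz gives $\Vert e^{i(t-T/2)\Delta^2}P_{\ge N}u^{\mathrm{out}}\Vert_{Z([T,2T])}\lesssim\Vert u^{\mathrm{out}}\Vert_{L^2}\le\varepsilon(R_u)$, where $\varepsilon(R)=\sup_{v\in K}\int_{|z|>R}|v|^2\,dx$. Precompactness of $K$ gives $\varepsilon(R)\to 0$ as $R\to\infty$, but with no rate, and $\varepsilon(R)$ has no dependence on $A$. Thus with $R_u$ ``fixed once and for all'' you obtain $\mathcal{Z}(A)\lesssim A^{-4n/(n+4)}R_u^{2n/(n+4)}+\varepsilon(R_u)+\mathcal{N}(A/2)$, which after optimizing over $R_u$ yields only a qualitative $o_A(1)$ and not the quantitative $A^{-1/2}$ that \eqref{Hyp4} asserts. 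The quantitative rate is essential for the bootstrap in Lemma \ref{Lem2}, where the iteration of \eqref{QTDE} uses the explicit $A^{-1/2}$ to seed the decay $\mathcal{Z}(A)\lesssim A^{-1/4}$; a rate-free error term would not allow that iteration to close.

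The paper avoids this trap by \emph{not} collapsing the far integral back to $e^{i(t-T/2)\Delta^2}u(T/2)$. Instead it keeps it as $\int_{\varepsilon}^{T/2}P_{\ge N}e^{i(t-s)\Delta^2}F(u(s))\,ds$ and applies the dispersive estimate directly to the forcing $F(u(s))$, whose Lebesgue norm $\Vert F(u)\Vert_{L^{2(n+4)/(n+12)}}$ (for $n\ge 5$; it uses the coarser \eqref{DecayEstimate2} here, not \eqref{DecayEstimate}) is controlled on each dyadic block $[\tau,2\tau]\subset[0,T/2]$ by the a priori bound \eqref{TrivialBound1bis}. The resulting geometric series in $\tau$ converges and produces the quantitative bound $T^{-1/2}$ in \eqref{SLBound2}, which upon interpolation with \eqref{SLBound1} and summation in frequency gives $A^{-2n/(3n+4)}\lesssim A^{-1/2}$ for $n\ge 5$. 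This replaces the rate-free tightness modulus by explicit Lebesgue-space bounds on $F(u)$ inherited from the control on the nonlinear solution, which is the essential ingredient your argument is missing.
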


\begin{proof}
Fix $T>0$ and let $I=[T,2T]$. Writing the Duhamel's formula \eqref{DuhamelFormula} with initial data at $T/2$ and using Strichartz estimates \eqref{StricEst}, we get that
\begin{equation}\label{QTDEProof1}
\begin{split}
&\Vert P_{\ge AT^{-\frac{1}{4}}}u\Vert_{Z(I)}\\
&\lesssim
\Vert P_{\ge AT^{-\frac{1}{4}}}e^{i(t-\frac{T}{2})\Delta^2}u(\frac{T}{2})\Vert_{Z(I)}+\Vert P_{\ge AT^{-\frac{1}{4}}}F(u)\Vert_{N([\frac{T}{2},2T])}.
\end{split}
\end{equation}
We estimate the second term by $\mathcal{N}(A/2)$. We now turn to the linear term.
Using conservation of Mass, Strichartz \eqref{StricEst} and Bernstein estimates \eqref{BernSobProp}, we get that
\begin{equation}\label{SLBound1}
\begin{split}
\Vert P_{\ge BT^{-\frac{1}{4}}}e^{i(t-\frac{T}{2})\Delta^2}u(\frac{T}{2})\Vert_{L^\frac{2(n+2)}{n}(I,L^\frac{2(n+2)}{n})}\lesssim_{M(u)} \left(BT^{-\frac{1}{4}}\right)^{-\frac{n}{n+2}}.
\end{split}
\end{equation}
Independently, using the Duhamel's formula \eqref{DuhamelFormula} with initial data at time $t=\varepsilon$, we get that
$$P_{BT^{-\frac{1}{4}}}e^{i(t-\frac{T}{2})\Delta^2}u(\frac{T}{2})=e^{i(t-\varepsilon)\Delta^2}P_{BT^{-\frac{1}{4}}}u(\varepsilon)+i\int_\varepsilon^\frac{T}{2}P_{BT^{-\frac{1}{4}}}e^{i(t-s)\Delta^2}F(u(s))ds.$$
We claim that for all $q\ge 2$, there holds that
\begin{equation}\label{LinearTermVanish}
\lim_{\varepsilon\to 0}\Vert e^{i(t-\varepsilon)\Delta^2}P_{BT^{-\frac{1}{4}}}u(\varepsilon)\Vert_{L^q([0,T]\times\mathbb{R}^n)}=0.
\end{equation}
Indeed, using the unitarity of the linear propagator and the properties of $u$ as in \eqref{CompactnessImage}, we get that
\begin{equation*}
\begin{split}
\Vert e^{i(t-\varepsilon)\Delta^2}P_{BT^{-\frac{1}{4}}}u(\varepsilon)\Vert_{L^\infty([0,T],L^2)}&=\Vert P_{BT^{-\frac{1}{4}}}u(\varepsilon)\Vert_{L^2}\\
&=\Vert P_{N(\varepsilon)^{-1}BT^{-\frac{1}{4}}}v(\varepsilon)\Vert_{L^2}\\
&\le \sup_{v\in K}\Vert P_{N(\varepsilon)^{-1}BT^{-\frac{1}{4}}}v\Vert_{L^2}\to 0,
\end{split}
\end{equation*}
as $\eps \to 0$. Then the H\"older's inequality in time gives \eqref{LinearTermVanish} for $q=2$, while Bernstein inequality \eqref{BernSobProp} gives \eqref{LinearTermVanish} for $q=\infty$. The general case follows by interpolation. This proves \eqref{LinearTermVanish}. Now to estimate the other term, we need to separate cases. We first treat the case $n\ge 5$. Using \eqref{DecayEstimate2} and \eqref{LinearTermVanish} we get
that for all $B,T$,
\begin{equation}\label{SLBound2}
\begin{split}
&\Vert P_{BT^{-\frac{1}{4}}}e^{i(t-\frac{T}{2})\Delta^2}u(\frac{T}{2})\Vert_{L^\frac{2(n+4)}{n-4}([T,2T],L^\frac{2(n+4)}{n-4})}\\
&=\lim_{\varepsilon\to 0}\Vert\int_\varepsilon^\frac{T}{2}e^{i(t-s)\Delta^2}P_{BT^{-\frac{1}{4}}}F(u(s))ds\Vert_{L^\frac{2(n+4)}{n-4}([T,2T],L^\frac{2(n+4)}{n-4})}\\
&\lesssim T^\frac{n-4}{2(n+4)}\lim_{\varepsilon\to 0}\Vert \int_\varepsilon^\frac{T}{2}e^{i(t-s)\Delta^2}P_{BT^{-\frac{1}{4}}}F(u(s))ds\Vert_{L^\infty([T,2T],L^\frac{2(n+4)}{n-4})}\\
&\lesssim T^\frac{n-4}{2(n+4)}\sup_{t\in [T,2T]}\int_0^\frac{T}{2}\frac{1}{\vert t-s\vert^{\frac{2n}{n+4}}}\Vert F(u(s))\Vert_{L^\frac{2(n+4)}{n+12}}\\
&\lesssim T^{-\frac{3n+4}{2(n+4)}}\Vert F(u)\Vert_{L^1([0,T],L^\frac{2(n+4)}{n+12})}\\
&\lesssim T^{-\frac{3n+4}{2(n+4)}}\sum_{\tau<T/2}\Vert u\Vert_{L^\frac{n+8}{n}([\tau,2\tau],L^\frac{2(n+4)(n+8)}{n(n+12)})}^\frac{n+8}{n}\\
&\lesssim T^{-\frac{3n+4}{2(n+4)}}\left(\sum_{\tau < T}\tau^\frac{n}{n+4}\Vert u\Vert_{L^\frac{(n+4)(n+8)}{4n}([\tau,2\tau],L^\frac{2(n+4)(n+8)}{n(n+12)})}^\frac{n+8}{n}\right)\\
&\lesssim_u T^{-\frac{3n+4}{2(n+4)}}\sum_{\tau < T}\tau^\frac{n}{n+4}=T^{-\frac{1}{2}}
\end{split}
\end{equation}
where the summation is over all $\tau=2^{-j}T$, $j\ge 0$, and we have used \eqref{TrivialBound1bis} to bound $u$ on $[\tau,2\tau]$. Combining \eqref{SLBound1} and \eqref{SLBound2} with H\"older's inequality, we get
\begin{equation*}
\begin{split}
&\Vert P_{ BT^{-\frac{1}{4}}}e^{i(t-\frac{T}{2})\Delta^2}u(\frac{T}{2})\Vert_{Z(I)}\\
&\lesssim \Vert P_{ BT^{-\frac{1}{4}}}e^{i(t-\frac{T}{2})\Delta^2}u(\frac{T}{2})\Vert_{L^\frac{2(n+4)}{n-4}(I\times\mathbb{R}^n)}^\frac{n}{3n+4}\Vert P_{BT^{-\frac{1}{4}}}e^{i(t-\frac{T}{2})\Delta^2}u(\frac{T}{2})\Vert_{L^\frac{2(n+2)}{n}(I\times\mathbb{R}^n)}^\frac{2(n+2)}{3n+4}\\
&\lesssim_u B^{-\frac{2n}{3n+4}}
\end{split}
\end{equation*}
and consequently, summing on all $B=2^jA$, $j\ge 0$,
\begin{equation*}
\begin{split}
\Vert P_{\ge AT^{-\frac{1}{4}}}e^{i(t-\frac{T}{2})\Delta^2}u(\frac{T}{2})\Vert_{Z(I)}&\le \sum_{B\ge A}\Vert P_{ BT^{-\frac{1}{4}}}e^{i(t-\frac{T}{2})\Delta^2}u(\frac{T}{2})\Vert_{Z(I)}\lesssim_u A^{-\frac{2n}{3n+4}}
\end{split}
\end{equation*}
which proves that the first term on the right hand side of \eqref{QTDEProof1} is also acceptable when $n\ge 5$. When $n\le 4$, we proceed as follows. Using \eqref{DecayEstimate} and \eqref{LinearTermVanish} we get that
\begin{equation}\label{SLBound3}
\begin{split}
&\Vert P_{BT^{-\frac{1}{4}}}e^{i(t-\frac{T}{2})\Delta^2}u(\frac{T}{2})\Vert_{L^\infty([T,2T]\times\mathbb{R}^n)}\\
&=\lim_{\varepsilon\to 0}\Vert\int_\varepsilon^\frac{T}{2}e^{i(t-s)\Delta^2}P_{BT^{-\frac{1}{4}}}F(u(s))ds\Vert_{L^\infty([T,2T]\times\mathbb{R}^n)}\\
&\lesssim \left(BT^{-\frac{1}{4}}\right)^{-n}\sup_{t\in I}\int_0^\frac{T}{2}\frac{1}{\vert t-s\vert^\frac{n}{2}}\Vert F(u(s))\Vert_{L^1}ds\\
&\lesssim B^{-n}T^{-\frac{n}{4}}\Vert u\Vert_{L^\frac{n+8}{n}([0,T]\times\mathbb{R}^n)}^\frac{n+8}{n}\\
&\lesssim B^{-n}T^{-\frac{n}{4}}\left(\sum_{\tau\le T/2}\tau^\frac{n}{8}\Vert u\Vert_{L^\frac{8(n+8)}{n(8-n)}([\tau,2\tau],L^\frac{n+8}{n})}^\frac{n+8}{n}\right)\\
&\lesssim_u B^{-n}T^{-\frac{n}{8}},
\end{split}
\end{equation}
where, once again, the summation is over $\tau=2^{-j}T$, and we have used \eqref{TrivialBound1bis} to bound $u$ on $[\tau,2\tau]$.
Combining \eqref{SLBound1} and \eqref{SLBound3} with H\"older's inequality, we get
\begin{equation*}
\begin{split}
&\Vert P_{ BT^{-\frac{1}{4}}}e^{i(t-\frac{t}{2})\Delta^2}u(\frac{T}{2})\Vert_{Z(I)}\\
&\lesssim \Vert P_{ BT^{-\frac{1}{4}}}e^{i(t-\frac{t}{2})\Delta^2}u(\frac{T}{2})\Vert_{L^\infty(I\times\mathbb{R}^n)}^\frac{2}{n+4}\Vert P_{ BT^{-\frac{1}{4}}}e^{i(t-\frac{t}{2})\Delta^2}u(\frac{T}{2})\Vert_{L^\frac{2(n+2)}{n}(I\times\mathbb{R}^n)}^\frac{n+2}{n+4}\\
&\lesssim_u B^{-\frac{3n}{n+4}}
\end{split}
\end{equation*}
and consequently,
\begin{equation*}
\begin{split}
\Vert P_{\ge AT^{-\frac{1}{4}}}e^{i(t-\frac{t}{2})\Delta^2}u(\frac{T}{2})\Vert_{Z(I)}&\le \sum_{B\ge A}\Vert P_{ BT^{-\frac{1}{4}}}e^{i(t-\frac{t}{2})\Delta^2}u(\frac{T}{2})\Vert_{Z(I)}\lesssim_u A^{-\frac{3n}{n+4}}
\end{split}
\end{equation*}
which proves that the first term in the right hand side of \eqref{QTDEProof1} is also acceptable when $n\le 4$.
\end{proof}

Now, we prove the last estimate we need.
\begin{lemma}\label{DecayInMass}
There holds that
\begin{equation*}\label{EstimateMass}
\mathcal{M}(A)\lesssim \sum_{k=0}^{\infty}\mathcal{N}(2^kA).
\end{equation*}
In particular, \eqref{Hyp5} holds true.
\end{lemma}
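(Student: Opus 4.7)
The plan is to iterate Duhamel's formula forward in time along the dyadic scales $[2^j T,\, 2^{j+1}T]$, exploiting the fact that for the self-similar solution the natural frequency $N(t)=t^{-1/4}$ decreases as $t$ grows, so the fixed frequency cutoff $AT^{-1/4}$ eventually sits arbitrarily far into the tail of $\widehat{v(t)}$.

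First, on the interval $[T,2T]$, apply \eqref{DuhamelFormula} backwards from $2T$:
\[
u(T)=e^{-iT\Delta^2}u(2T)+i\int_{2T}^{T}e^{i(T-s)\Delta^2}F(u(s))\,ds.
\]
Applying $P_{\ge AT^{-1/4}}$, taking the $L^2$ norm, using unitarity of $e^{it\Delta^2}$ together with the dual Strichartz bound (a consequence of \eqref{StricEst}), and noting that $AT^{-1/4}$ is the natural scale-$A$ cutoff on $[T,2T]$, we get
\[
\|P_{\ge AT^{-1/4}}u(T)\|_{L^2}\le \|P_{\ge AT^{-1/4}}u(2T)\|_{L^2}+\mathcal{N}(A).
\]
Iterating on $[2^jT,2^{j+1}T]$ and observing that on this block one has $AT^{-1/4}=(A\,2^{j/4})(2^jT)^{-1/4}$, so $\|P_{\ge AT^{-1/4}}F(u)\|_{N([2^jT,2^{j+1}T])}\le \mathcal{N}(A\,2^{j/4})$, the telescoping gives
\[
\|P_{\ge AT^{-1/4}}u(T)\|_{L^2}\le \|P_{\ge AT^{-1/4}}u(2^kT)\|_{L^2}+\sum_{j=0}^{k-1}\mathcal{N}(A\,2^{j/4}).
\]

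The main point is to let $k\to+\infty$ and kill the boundary term. This is where the precompactness of $K$ (the same mechanism as in \eqref{LinearTermVanish}) enters: writing $u(2^kT)=g_{(N(2^kT),y(2^kT))}v(2^kT)$ and rescaling yields
\[
\|P_{\ge AT^{-1/4}}u(2^kT)\|_{L^2}=\|P_{\ge A\,2^{k/4}}v(2^kT)\|_{L^2}\le \sup_{v\in K}\|P_{\ge A\,2^{k/4}}v\|_{L^2}\xrightarrow[k\to\infty]{}0.
\]
I expect this vanishing step to be the crux of the argument: it is where the self-similar assumption $N(t)=t^{-1/4}$ is essential, because it forces $AT^{-1/4}$ to grow like $A\,2^{k/4}$ in the intrinsic frequency units at time $2^kT$.

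Finally, regrouping the sum into dyadic blocks of four consecutive indices and using the trivial monotonicity of $\mathcal{N}$ gives
\[
\sum_{j=0}^{\infty}\mathcal{N}(A\,2^{j/4})=\sum_{k=0}^{\infty}\sum_{j=4k}^{4k+3}\mathcal{N}(A\,2^{j/4})\le 4\sum_{k=0}^{\infty}\mathcal{N}(2^kA),
\]
and taking $\sup_{T>0}$ yields $\mathcal{M}(A)\lesssim \sum_{k\ge 0}\mathcal{N}(2^kA)$. To derive \eqref{Hyp5}, if $\mathcal{N}(A)\lesssim A^{-\sigma}$ for some $\sigma>0$, the geometric tail $\sum_{k\ge0}(2^kA)^{-\sigma}\lesssim_\sigma A^{-\sigma}$ gives immediately $\mathcal{M}(A)\lesssim A^{-\sigma}$, which is stronger than the claimed $A^{-\sigma}+A^{-5/2}$.
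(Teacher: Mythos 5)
Your proof is correct and follows essentially the same route as the paper: Duhamel's formula pushed forward in time, the boundary term at time $2^kT$ killed by precompactness of $K$ together with $N(t)=t^{-1/4}$ (which makes the rescaled cutoff $A\,2^{k/4}\to\infty$), the dual Strichartz estimate on each dyadic time block, and the shift $AT^{-1/4}=A\,2^{j/4}(T')^{-1/4}$ to convert to the normalized $\mathcal{N}$. The only cosmetic difference is bookkeeping: you iterate over single intervals $[2^jT,2^{j+1}T]$ and regroup four at a time via the essential monotonicity of $\mathcal{N}$, whereas the paper chunks directly into blocks $[2^{4k}T,2^{4(k+1)}T]$; these are the same argument.
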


\begin{proof}
Fix $T>0$ and $t\in [T,2T]$.
Using the Duhamel's formula \eqref{DuhamelFormula} with initial data at time $S>0$, we get that
$$P_{\ge AT^{-\frac{1}{4}}}u(t)=P_{\ge AT^{-\frac{1}{4}}}e^{i(t-S)\Delta^2}u(S)+i\int_S^te^{i(t-s)\Delta^2}P_{\ge AT^{-\frac{1}{4}}}F(u(s))ds,$$
and using the fact that $N(t)=t^{-\frac{1}{4}}$, we get that
$$\Vert P_{\ge AT^{-\frac{1}{4}}}u(S)\Vert_{L^2}=\Vert P_{\ge A(S/T)^\frac{1}{4}}v(S)\Vert_{L^2}\to 0,\hskip.1cm\hbox{as}\hskip.1cm S\to +\infty, $$
where $v$ is as in \eqref{CompactnessImage}.
Consequently, we obtain that
\begin{equation*}
\begin{split}
\Vert P_{\ge AT^{-\frac{1}{4}}}u(t)\Vert_{L^2}&\lesssim \Vert P_{\ge AT^{-\frac{1}{4}}}u(2^{4L+4}T)\Vert_{L^2}\\
&+\sum_{k=0}^L\Vert\int_{2^{4k}T}^{2^{4(k+1)}T}e^{i(t-s)\Delta^2}P_{\ge AT^{-\frac{1}{4}}}F(u(s))ds\Vert_{L^2}\\
&\lesssim o(1)+\sum_{k=0}^L\Vert\int_{2^{4k}T}^{2^{4(k+1)}T}e^{-is\Delta^2}P_{\ge AT^{-\frac{1}{4}}}F(u(s))ds\Vert_{L^2},
\end{split}
\end{equation*}
where $o(1)\to 0$ as $L\to +\infty$. Letting $L\to +\infty$ in the estimate above then gives the result.
\end{proof}
Using the above results, we can now exclude the self-similar blow-up scenario.
\begin{proof}[Proof of Proposition \ref{nonexistenceOfSelfSimilarBlowUp}]
Let $u$ be a solution of \eqref{4NLS}, $u\in S^0_{loc}(I)$ on an interval $I=(0,+\infty)$, such that \eqref{CompactnessImage} holds true with $N(t)=t^{-\frac{1}{4}}$. In particular, $u$ blows up in finite time. With the preliminary remarks and Lemmas  \ref{QTD} and \ref{DecayInMass}, we deduce that \eqref{Hyp1}--\eqref{NonlinearEstimates} holds true, and consequently Corollary \ref{GainOfRegCor} gives that $u\in H^2$, and hence has a conserved energy. In the defocusing case, this gives a global bound on the $H^2$-norm, which by Proposition \ref{LocExProp} contradicts blow-up in finite time. In the focusing case, conservation of Energy and the sharp Gagliardo-Nirenberg inequality \eqref{Gag} give that, if $M(u)< M(Q)$
$$E(u_0)=E(u(t))\ge \frac{1}{2}\left(1-\frac{M(u)^\frac{8}{n}}{M(Q)^\frac{8}{n}}\right)\Vert\Delta u(t)\Vert_{L^2}^2> 0.$$ In particular, if $M(u)<M(Q)$, $u$ remains bounded in $H^2$ and does not blow up in finite time.
\end{proof}


\subsection{The case of Global solutions}\label{Sec-Case2}

Here we deal with the second and third scenarios of Theorem \ref{3ScenariosThm}. Our first result proves that the solution is actually smoother than expected.

\begin{proposition}\label{GainOfRegularityScenario23}
Let $n\ge 5$, and let $u$ be a solution of \eqref{4NLS} satisfying \eqref{CompactnessImage} with $N\le 1$.
Then $u(0)\in H^{2+\frac{1}{n+5}}$.
\end{proposition}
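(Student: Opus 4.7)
The proof will proceed by Tao's double-Duhamel trick, as announced in the introduction for scenarios II and III. I argue directly on $u(0)$ rather than through the framework of Section \ref{Sec-Abs}, because in these scenarios $\Vert u\Vert_{Z(\R)}=+\infty$ and the boundedness hypothesis \eqref{Hyp2} fails in its global form. The restriction $n\ge 5$ will enter twice: once to make the dispersive decay $|\tau|^{-n/2}$ integrable over two half-lines of time, and once to ensure that the resulting gain of $n/2$ derivatives is strictly larger than $2+1/(n+5)$.

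First I would collect two preparatory bounds. Using the identity $\Vert P_{\ge A}u(t)\Vert_{L^2}=\Vert P_{\ge A/N(t)}v(t)\Vert_{L^2}$, the $L^2$-precompactness of $K$ from \eqref{CompactnessImage}, and the assumption $N(t)\le 1$, the quantity $\mathcal{M}(A):=\sup_t\Vert P_{\ge A}u(t)\Vert_{L^2}$ satisfies $\mathcal{M}(A)\to 0$ as $A\to\infty$; in addition, tightness of $K$ in physical space combined with the Bernstein inequalities \eqref{BernSobProp} yields the pointwise-in-time bound $\sup_t\Vert F(u)(t)\Vert_{L^1}\lesssim_u 1$. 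Separately, stationary phase applied to $\langle e^{iT\Delta^2}f,\phi\rangle=\int e^{iT|\xi|^4}\hat f(\xi)\overline{\hat\phi(\xi)}\,d\xi$ yields the weak-$\ast$ vanishing $e^{iT\Delta^2}f\rightharpoonup 0$ in $L^2$ as $|T|\to\infty$, for every $f\in L^2$.

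Writing the forward Duhamel formula from time $-T_1$ and the backward Duhamel from $+T_2$ and expanding $\Vert P_{\ge A}u(0)\Vert_{L^2}^2$, I would obtain
\begin{equation*}
\Vert P_{\ge A}u(0)\Vert_{L^2}^2=\int_{-T_1}^{0}\!\!\int_{0}^{T_2}\bigl\langle e^{i(s_2-s_1)\Delta^2}P_{\ge A}F(u)(s_1),\,P_{\ge A}F(u)(s_2)\bigr\rangle\,ds_1\,ds_2+\mathrm{II}(T_1,T_2),
\end{equation*}
where $\mathrm{II}(T_1,T_2)$ is the sum of three cross-terms, each containing at least one linear factor $e^{\pm iT_i\Delta^2}P_{\ge A}u(\mp T_i)$. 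Coupling the weak-$\ast$ decay of the linear factors to a uniform Strichartz bound on the Duhamel factor then forces $\mathrm{II}(T_1,T_2)\to 0$ as $T_1,T_2\to+\infty$. For the surviving double integral, summing the localized dispersive estimate \eqref{DecayEstimate} over dyadic $N\ge A$ gives $\Vert P_{\ge A}e^{i\tau\Delta^2}\Vert_{L^1\to L^\infty}\lesssim A^{-n}|\tau|^{-n/2}$, whence the integrand is bounded by $A^{-n}|s_2-s_1|^{-n/2}\Vert F(u)(s_1)\Vert_{L^1}\Vert F(u)(s_2)\Vert_{L^1}$. Using $s_2-s_1\ge\max(s_2,|s_1|)$ and the uniform $L^1$ bound above, the double integral is controlled by $A^{-n}\int_{-\infty}^{0}|s_1|^{1-n/2}\,ds_1$, which converges precisely when $n\ge 5$. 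Therefore $\Vert P_{\ge A}u(0)\Vert_{L^2}\lesssim_u A^{-n/2}$, and a dyadic summation (using $4+2/(n+5)<n$ for $n\ge 5$) gives $u(0)\in H^{2+1/(n+5)}$.

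The main obstacle is the cancellation in the cross-terms: a typical mixed term of the form $\langle e^{iT_1\Delta^2}P_{\ge A}u(-T_1),\,\int_{0}^{T_2}e^{-is\Delta^2}P_{\ge A}F(u)(s)\,ds\rangle$ has no decay in $L^2$ operator norm, and one must couple the weak-$\ast$ vanishing of the linear factor (from the stationary-phase step) to a strong uniform Strichartz bound on the Duhamel integral; the preparatory uniform bounds on $u$ and $F(u)$ play a delicate role here. A secondary but non-trivial obstacle is the $L^1$ estimate on $F(u)$ itself, which must be extracted from $L^2$-compactness of $K$ alone by combining frequency- and space-tightness of $u(t)$, crucially using $N(t)\le 1$ to prevent harmful concentration.
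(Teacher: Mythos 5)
Your proposal correctly identifies the double-Duhamel structure and the role of the dispersive decay and of $n\ge 5$, but there is a fatal gap in the treatment of the near-diagonal contribution, and this is precisely what you lose by bypassing the framework of Section~\ref{Sec-Abs}.

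\textbf{Near-diagonal divergence.} You bound the double integral by $A^{-n}\int_{-\infty}^{0}\int_{0}^{\infty}|s_2-s_1|^{-n/2}\,ds_2\,ds_1\lesssim A^{-n}\int_{-\infty}^0|s_1|^{1-n/2}\,ds_1$ and claim this ``converges precisely when $n\ge 5$.'' It does not: $\int_{-\infty}^0|s_1|^{1-n/2}\,ds_1=\int_0^\infty r^{1-n/2}\,dr$, and while $n\ge 5$ gives convergence at $r\to\infty$, the integral diverges at $r\to 0^+$ precisely when $1-n/2\le -1$, i.e.\ $n\ge 4$; there is no dimension for which the iterated integral is finite. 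Geometrically, $|s_2-s_1|^{-n/2}$ is not locally integrable on $\{s_1\le 0\le s_2\}$ near the corner $s_1=s_2=0$ once $n\ge 4$. The $L^1\to L^\infty$ dispersive estimate is simply too lossy at short time separations, and near the diagonal one must revert to the Strichartz/Duhamel estimate. This is exactly the content of the paper's Whitney decomposition of $I_t$ into $Q_{-1}$ and $Q_{\ge 0}$: Lemma~\ref{EstimateForQ_0} treats only the far region $Q_{\ge 0}$ by your dispersive argument (and there your computation is fine), while the near region $Q_{-1}$ produces the term $\Vert P_{\ge A}F(u)\Vert_{N([t-R,t])}\Vert P_{\ge A}F(u)\Vert_{N([t,t+R])}$ in \eqref{EstimateForMass}. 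That term has no decay in $A$ a priori; it is closed only by feeding it into the bootstrap of Lemmas~\ref{Lem2} and~\ref{Lem3} (via \eqref{Hyp4}--\eqref{Hyp5} and \eqref{NonlinearEstimates}), which is a substantial portion of the argument you are attempting to avoid. Your stated reason for avoiding Section~\ref{Sec-Abs} --- that \eqref{Hyp2} fails because $\Vert u\Vert_{Z(\R)}=+\infty$ --- is a misreading: in \eqref{DefMZN} the paper defines $\mathcal{Z},\mathcal{N}$ as suprema over intervals of bounded length, which \emph{is} bounded thanks to $N(t)\le 1$ and \eqref{BoundOnNonlinearTermByH}, so \eqref{Hyp2} holds.

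\textbf{The pointwise $L^1$ bound.} The claim $\sup_t\Vert F(u)(t)\Vert_{L^1}\lesssim_u 1$ is also not established. Since $\Vert F(u)(t)\Vert_{L^1}=\Vert u(t)\Vert_{L^{(n+8)/n}}^{(n+8)/n}$ and the exponent $(n+8)/n$ lies strictly between $1$ and $2$ for $n>8$ and strictly above $2$ for $n<8$, neither case is controlled by $L^2$-precompactness of $K$ at a fixed time (for $p<2$ tightness in $L^2$ does not give $L^p$ control, and for $p>2$ one would need fixed-time Sobolev regularity one does not yet have). The paper only derives the \emph{space-time} bound \eqref{L1BoundOnNonlinearity}, $\Vert F(u)\Vert_{L^1(I\times\R^n)}\lesssim_u\langle|I|\rangle$, using the Strichartz norm $\Vert u\Vert_{S^0(I)}$ together with \eqref{BoundOnNonlinearTermByH}; this time-averaged estimate suffices for the $Q_{\ge 0}$ integral but cannot be upgraded to a pointwise one. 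In short, your far-time estimate is sound, but the near-diagonal contribution requires the Whitney split and the Section~\ref{Sec-Abs} bootstrap, neither of which can be dispensed with.
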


The proof of Proposition \ref{GainOfRegularityScenario23} is a consequence Section \ref{Sec-Abs} and of Lemmas \ref{DDFLemma} and \ref{EstimateForQ_0} which uses the ``Double Duhamel formula'' introduced in Tao \cite{Tao} which has proved to be helpful in many situations, see e.g., Killip and Visan \cite{KilVis, KilVis3}, Pausader \cite{PauBeam} and Tao \cite{Tao2}. We also refer to the survey by Killip and Visan \cite{KilVis2}.

\begin{lemma}\label{DDFLemma}
Let $u$ be a solution of \eqref{4NLS} such that $I=\mathbb{R}$, $h(t)\ge 1$ and \eqref{CompactnessImage} holds true, then for all $t\in \mathbb{R}$, there holds that
\begin{equation}\label{DF1}
\begin{split}
u(t)&=-i\int_t^{\to +\infty}e^{i(t-s)\Delta^2}F(u(s))ds\\
&=i\int_{\to-\infty}^te^{i(t-s)\Delta^2}F(u(s))ds
\end{split}
\end{equation}
where the integral are interpreted as a weakly convergent integral in $L^2$,
and
\begin{equation}\label{DDF}
\Vert P_{\ge A}u(t)\Vert_{L^2}^2=-\int_{s= -\infty}^t\int_{t^\prime=t}^{+\infty}\langle e^{i(t^\prime -s)\Delta^2}P_{\ge A}F(u(s)),P_{\ge A}F(u(t^\prime))\rangle dsdt^\prime
\end{equation}
where the integral is unconditionally convergent.
\end{lemma}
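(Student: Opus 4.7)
My plan proceeds in three stages: first derive the two weak Duhamel representations in \eqref{DF1}, then combine them algebraically to obtain \eqref{DDF}, and finally justify unconditional convergence of the resulting double integral. The main obstacle is the first stage---the weak vanishing of the free evolution at $\pm\infty$---which requires handling several asymptotic regimes for the rescaling function $g(T)$ produced by the compactness hypothesis \eqref{CompactnessImage}.

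For the first stage, fix $t \in \R$ and $T > t$; the standard Duhamel formula reads $u(t) = e^{i(t-T)\Delta^2} u(T) - i\int_t^T e^{i(t-s)\Delta^2} F(u(s))\, ds$, so the forward identity of \eqref{DF1} reduces to showing $e^{i(t-T)\Delta^2}u(T) \rightharpoonup 0$ in $L^2$ as $T \to +\infty$. I would test against Schwartz $\phi$ with $\hat\phi$ supported in an annulus $\{R^{-1} \le |\xi| \le R\}$, which are dense in $L^2$. Writing $u(T) = g(T) w_T$ with $w_T$ in the precompact set $K$, the pairing becomes $\langle w_T, g(T)^{-1} e^{i(T-t)\Delta^2}\phi\rangle$. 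Along any sequence $T_k \to +\infty$, after extracting a subsequence so that $w_{T_k} \to w_\infty$ strongly in $L^2$ and $N(T_k)$ converges, a direct Fourier computation of $g(T_k)^{-1} e^{i(T_k - t)\Delta^2}\phi$ shows that in each limiting regime where the scale parameter degenerates (to $0$ or $+\infty$) or the spatial center $y(T_k)$ escapes, this factor converges weakly to $0$ in $L^2$; in the remaining regime where $g(T_k)$ approaches a non-degenerate rescaling, the dispersive decay $\|e^{i(T_k-t)\Delta^2}\phi\|_{L^\infty}\lesssim |T_k - t|^{-n/4}$ from \eqref{DecayEstimate2} combined with a spatial localization and strong convergence $w_{T_k} \to w_\infty$ drives the pairing to zero. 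The backward identity follows from the symmetric argument at $T \to -\infty$.

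For the second and third stages, I would apply $P_{\ge A}$ to both identities in \eqref{DF1} and expand $\|P_{\ge A} u(t)\|_{L^2}^2$ as an inner product of the two representations. The scalar prefactor $(-i)\overline{i} = -1$ comes out, and the identity $(e^{i(t-s)\Delta^2})^{\ast} e^{i(t-t')\Delta^2} = e^{i(s-t')\Delta^2}$ composes the two propagators; a Fubini interchange then yields \eqref{DDF} (any potential complex conjugate on the right is immaterial since the left-hand side is real). To justify the interchange, decompose $P_{\ge A} = \sum_{N \ge A} P_N$, use frequency orthogonality to reduce to pairs $M \sim N$, and apply the frequency-localized kernel bound \eqref{DecayEstimate}, $\|P_N e^{i\tau \Delta^2}\|_{L^1 \to L^\infty} \lesssim N^{-n}|\tau|^{-n/2}$, together with a short-time Cauchy--Schwarz bound for $|t'-s|\le 1$. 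Uniform control of $\|P_N F(u(\cdot))\|_{L^1}$ and $\|P_N F(u(\cdot))\|_{L^2}$ will follow from compactness of $K$, mass conservation, H\"older, and Sobolev; with $n \ge 5$ the decay $|t'-s|^{-n/2}$ combined with these bounds will be integrable over $(-\infty, t) \times (t, +\infty)$, giving the absolute and hence unconditional convergence claimed.
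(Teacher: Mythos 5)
Your plan tracks the paper's proof at a high level, but the two stages where you depart from the paper both deserve comment, and the third stage has a genuine gap.

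For \eqref{DF1}, the paper's argument is more economical: it writes the pairing $\langle e^{i(t-T)\Delta^2}u(T),\phi\rangle = \langle u(T), e^{i(T-t)\Delta^2}\phi\rangle$, approximates $u(T)=g(T)w_T$ with $w_T$ in a finite $\varepsilon$-net of $K$ consisting of smooth compactly supported functions, and then invokes \eqref{DecayEstimate2} together with $\Vert g_{(N,y)}f\Vert_{L^1}=N^{-n/2}\Vert f\Vert_{L^1}$ to get $\lesssim N(T)^{-n/2}\vert T-t\vert^{-n/4}$. Your case analysis is also correct, though partly redundant given the hypotheses: since the rescaling parameter is bounded (the lemma's condition pins down $N$ on one side and the scenario supplies the other bound), the ``scale degenerates to $+\infty$'' branch never occurs, and the ``$y(T_k)$ escapes'' branch is absorbed into the dispersive estimate when $N(T_k)$ stays away from $0$ (since smallness in $L^\infty$ of the propagated test function beats any spatial translation once $w_\infty$ is localized in $L^2$). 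The only branch that genuinely needs separate treatment is $N(T_k)\to 0$, where you correctly use frequency separation against a test function with annular Fourier support.

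For \eqref{DDF}, your route (substitute the two weak Duhamel representations and justify Fubini via absolute convergence) differs from the paper's. The paper truncates the double integral to $[S,t]\times[t,T]$, recognizes it via the unitary composition $e^{i(t-s)\Delta^2}$, $e^{i(t-t')\Delta^2}$ as $-\Vert P_{\ge A}u(t)\Vert_{L^2}^2$ plus three cross terms, and kills the cross terms in the limit $(S,T)\to(-\infty,+\infty)$ using the quantitative weak decay \eqref{ClaimWeakScattering}--\eqref{ClaimWeakScattering2} (dispersion plus the compactness rescaling and a frequency cut-off to handle $N\to 0$). This is cleaner because it never requires the full double integral to converge absolutely, only the boundary cross terms to vanish. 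Your route would also work if absolute convergence were established.

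The gap is precisely there. You assert that ``uniform control of $\Vert P_N F(u(\cdot))\Vert_{L^1}$ and $\Vert P_N F(u(\cdot))\Vert_{L^2}$ will follow from compactness of $K$, mass conservation, H\"older, and Sobolev.'' This is not so. Pointwise in time, $\Vert F(u(s))\Vert_{L^1_x}=\Vert u(s)\Vert_{L^{(n+8)/n}_x}^{(n+8)/n}$ and $\Vert F(u(s))\Vert_{L^2_x}=\Vert u(s)\Vert_{L^{2(n+8)/n}_x}^{(n+8)/n}$, and neither exponent is controlled by the $L^2$ mass (Sobolev only raises exponents, and $L^2$ precompactness does not give compactness in $L^{(n+8)/n}$). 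The only pointwise-in-time Lebesgue norm of $F(u(s))$ that is controlled by mass alone is $\Vert F(u(s))\Vert_{L^{2n/(n+8)}_x}\lesssim M(u)^{(n+8)/(2n)}$, and $2n/(n+8)\ge 1$ only when $n\ge 8$. This is exactly how Lemma \ref{EstimateForQ_0} proceeds: for $n\ge 8$ it uses the $L^{2n/(n+8)}$--$L^{2n/(n-8)}$ dispersive bound; for $5\le n\le 8$ it replaces the pointwise-in-time $L^1_x$ norm by the time-averaged bound $\Vert F(u)\Vert_{L^1_t L^1_x(I)}\lesssim_u\langle\vert I\vert\rangle$, obtained via the Strichartz $S^0$-norm on unit time intervals (a pair $(p,q)$ with $4/p+n/q=n/2$ and $q=(n+8)/n$), and sums over a Whitney decomposition of the $(s,t')$ quadrant. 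Your sketch needs this dichotomy; as written, the step where you feed $L^1_x$ or $L^2_x$ bounds on $F(u(s))$ into the frequency-localized dispersive kernel would fail for generic $n\ge 5$.
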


\begin{proof} Indeed, using the Duhamel's formula with initial data at time $T$ gives
$$u(t)=e^{i(t-T)\Delta^2}u(T)+i\int_T^te^{i(t-s)\Delta^2}F(u(s))ds,$$
and the first term weakly converges to $0$ in $L^2$.
Indeed from \eqref{DecayEstimate2}, we see that if $f_1,f_2$ are smooth compactly supported functions (in particular, $f_1,f_2\in L^1$), then
\begin{equation*}
\left\vert\langle e^{is\Delta^2}f_1,f_2\rangle\right\vert\lesssim_{f_1,f_2}\vert s\vert^{-\frac{n}{4}}.
\end{equation*}
Weak convergence follows by density.
This gives \eqref{DF1}. To get \eqref{DDF}, we also remark that, still by \eqref{DecayEstimate2},
for $f_1,f_2$ smooth functions compactly supported  in frequency, there holds that
\begin{equation}\label{ClaimWeakScattering}
\begin{split}
&\left\vert\langle e^{i(t^\prime-s)\Delta^2}P_{\ge A}g_{(N(s),y(s))}f_1,P_{\ge A}g_{(N(t^\prime),y(t^\prime))}f_2\rangle\right\vert\\
&\lesssim_{f_1,f_2}N(t^\prime)^{-\frac{n}{2}}N(s)^{-\frac{n}{2}}\vert t^\prime-s\vert^{-\frac{n}{4}}\\
&\lesssim_{f_1,f_2}\left(\frac{1}{N(t^\prime)\vert t^\prime\vert^\frac{1}{4}}\right)^\frac{n}{2}\left(\frac{1}{N(s)\vert s\vert^\frac{1}{4}}\right)^\frac{n}{2}\left(\frac{\sqrt{\vert t^\prime s\vert}}{\vert t^\prime-s\vert}\right)^\frac{n}{4} \hskip.1cm\hbox{and}\\
&=0\hskip.1cm \hbox{if}\hskip.1cm N(s)\lesssim_{f_1,A}1\hskip.1cm\hbox{or}\hskip.1cm \hbox{if}\hskip.1cm N(t^\prime)\lesssim_{f_2,A}1,\\
\end{split}
\end{equation}
where the last line follows from the fact that the Fourier support of $g_{(N(s),y(s))}f_1$ and $g_{(N(t^\prime),y(t^\prime)}f_2$ is included in $B(0,A/2)$ if $N(s)$ or $N(t^\prime)$ is too small.
From \eqref{ClaimWeakScattering}, we get that
\begin{equation}\label{ClaimWeakScattering2}
\left\vert\langle e^{i(t^\prime-s)\Delta^2}P_{\ge A}g_{(N(s),y(s))}f_1,P_{\ge A}g_{(N(t^\prime),y(t^\prime))}f_2\rangle\right\vert\to 0
\end{equation}
as $t^\prime-s\to \infty$ with $s\le t\le t^\prime$.
By compactness of $K$, we can replace $g_{(N(s),y(s))}f_1$ by $u(s)$ and $g_{(N(t),y(t))}f_2$ by $u(t^\prime)$ in \eqref{ClaimWeakScattering2}. Using this, we get that
\begin{equation*}
\begin{split}
&\int_{s=S}^t\int_{t^\prime=t}^T\langle e^{i(t^\prime -s)\Delta^2}P_{\ge A}F(u(s)),P_{\ge A}F(u(t^\prime))\rangle dsdt^\prime\\
&=\langle \int_{s=S}^te^{i(t-s)\Delta^2}P_{\ge A}F(u(s))ds,\int_{t^\prime=t}^Te^{i(t-t^\prime)\Delta^2}P_{\ge A}F(u(t^\prime))dt^\prime\rangle\\
&=-\langle \left(P_{\ge A}u(t)-e^{i(t-S)\Delta}P_{\ge A}u(S)\right),\left(P_{\ge A}u(t)-e^{i(t-T)\Delta}P_{\ge A}u(T)\right)\rangle\\
&=-\Vert P_{\ge A}u(t)\Vert_{L^2}^2+\langle e^{i(t-S)\Delta}P_{\ge A}u(S),P_{\ge A}u(t)\rangle+\langle P_{\ge A}u(t),e^{i(t-T)\Delta}P_{\ge A}u(T)\rangle\\
&+\langle e^{i(T-S)\Delta}P_{\ge A}u(S),P_{\ge A}u(T)\rangle,
\end{split}
\end{equation*}
and letting $(T,S)\to (+\infty,-\infty)$, and using \eqref{ClaimWeakScattering2} we obtain the result.
\end{proof}

Let $t$ be a chosen time, and choose $R>0$ a time scale (ultimately, we will choose $R=1$), then we divide
$$I_t=\{(s,t^\prime):-\infty< s\le t\le t^\prime< +\infty\}=Q_{-1}\cup Q_{\ge 0}$$
where
$$Q_{-1}=\{(s,t^\prime): t-R\le s\le t\le t^\prime\le t+R\}$$
correspond to the times close to $t$, and we make a Whitney decomposition of the times far away from $t$ as follows:
$$Q_{\ge 0}=\cup_{k\ge 0,i\in\{1,2,3\}}Q_k^i$$
with $Q_k^i$ the cubes of length $l_k=2^kR$, situated at distance greater or equal to $\frac{1}{\sqrt{2}}l_k$ of the diagonal $\{(s,s),s\in\mathbb{R}\}$ and such that
$$Q_{-1}\cup\cup_{k=0,i\in\{1,2,3\}}^KQ_k^i=\{(s,t^\prime):t-2^{K+1}R\le s\le t\le t^\prime\le t+2^{K+1}R\}.$$
More precisely, $Q_k^1$ is centered at $(t-2^{k-1}3R,t+2^{k-1}R)$, $Q_k^2$ at $(t-2^{k-1}3R,t+2^{k-1}3R)$ and $Q_k^3$ at $(t-2^{k-1}R,t+2^{k-1}3R)$.

Our next lemma shows that the contribution of large times (i.e. $Q_{\ge 0}$, with $R=1$) for large frequencies is small in dimensions $n\ge 5$.

\begin{lemma}\label{EstimateForQ_0}
Let $u$ satisfy the hypothesis of Proposition \ref{GainOfRegularityScenario23}, then there holds that
\begin{equation}\label{EForQ_0}
\left\vert\int\int_{Q_{\ge0}}\langle e^{i(t^\prime-s)\Delta^2}P_{\ge A}F(u(s)),P_{\ge A}F(u(t^\prime))\rangle dsdt^\prime\right\vert\lesssim_u \begin{cases}R^{\frac{4-n}{2}}A^{-n}&\hbox{if}\hskip.1cm 5\le n\le 8\\
R^{-2}A^{-8}&\hbox{if}\hskip.1cm n\ge 8
\end{cases}
\end{equation}
\end{lemma}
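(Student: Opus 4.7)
The plan is to bound the integrand on each Whitney cube $Q_k^i$ by means of a frequency-localized dispersive estimate for $P_{\ge A}e^{i\tau\Delta^2}$, then sum geometrically using $|Q_k^i|=l_k^2$ and $\tau\sim l_k$ on $Q_k^i$, with a case split $5\le n\le 8$ versus $n\ge 8$ that reflects the choice of Lebesgue exponent used for $F(u)$. The key kernel bound is: summing the improved dispersive estimate \eqref{DecayEstimate} over dyadic scales $N\ge A$ (valid in our regime since $\tau\ge R\gtrsim 1$ and $A$ large give $\tau A^4\gtrsim 1$) yields $\|e^{i\tau\Delta^2}P_{\ge A}\|_{L^1\to L^\infty}\lesssim A^{-n}\tau^{-n/2}$, and Riesz--Thorin with the unitary $L^2\to L^2$ bound upgrades this to
\[
\|e^{i\tau\Delta^2}P_{\ge A}\|_{L^{p'}\to L^p}\lesssim A^{-n(1-2/p)}\tau^{-(n/2)(1-2/p)},\qquad 2\le p\le\infty.
\]
Moving one projection across the pairing and applying H\"older gives
\[
|\langle e^{i\tau\Delta^2}P_{\ge A}F(u(s)),P_{\ge A}F(u(t'))\rangle|\lesssim A^{-n(1-2/p)}\tau^{-(n/2)(1-2/p)}\|F(u(s))\|_{L^{p'}}\|F(u(t'))\|_{L^{p'}}.
\]

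When $5\le n\le 8$ I take $p=\infty$ and $p'=1$, so the kernel factor is $A^{-n}\tau^{-n/2}$. Since $\|F(u(t))\|_{L^1_x}=\|u(t)\|_{L^{(n+8)/n}_x}^{(n+8)/n}$ with $(n+8)/n\in[2,2(n+4)/n]$, I interpolate
\[
\|u\|_{L^{(n+8)/n}_{t,x}(J\times\R^n)}\le \|u\|_{L^2_{t,x}(J\times\R^n)}^{n^2/(4(n+8))}\|u\|_{Z(J)}^{(n+4)(8-n)/(4(n+8))},
\]
then use $\|u\|_{L^2_{t,x}(J)}\le |J|^{1/2}M(u)^{1/2}$ (H\"older in time plus mass conservation) and $\|u\|_{Z(J)}\lesssim_u(1+|J|)^{n/(2(n+4))}$, which follows from Lemma \ref{le:Strichartz-Via-N} together with $N(t)\le 1$. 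For $|J|\ge 1$ this yields $\int_J\|F(u(t))\|_{L^1_x}\,dt\lesssim_u|J|$. When $n\ge 8$ I instead take $p=2n/(n-8)$, $p'=2n/(n+8)\le 2$, which produces the kernel factor $A^{-8}\tau^{-4}$; now the direct identity $\|F(u(t))\|_{L^{p'}_x}=\|u(t)\|_{L^2_x}^{(n+8)/n}=M(u)^{(n+8)/(2n)}$ gives a uniform bound from mass conservation alone, and no Strichartz input is needed.

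Each Whitney cube factors as $Q_k^i=J^s\times J^{t'}$ with $|J^s|=|J^{t'}|=l_k=2^kR$; the double integral factorizes and the above estimates combine to
\[
\iint_{Q_k^i}|\langle\cdot,\cdot\rangle|\,ds\,dt'\lesssim_u A^{-n(1-2/p)}l_k^{\,2-(n/2)(1-2/p)}.
\]
For $5\le n\le 8$ the exponent $2-n/2<0$, so the sum $\sum_k A^{-n}(2^kR)^{(4-n)/2}$ is a convergent geometric series majorized by $A^{-n}R^{(4-n)/2}$; for $n\ge 8$ the exponent $-2$ sums to $A^{-8}R^{-2}$, and summing the three translates $i\in\{1,2,3\}$ contributes only a constant. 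The main obstacle, and the reason for the case split, is the competition between the kernel decay (which saturates at $A^{-8}$ as $n$ crosses $8$) and the integrability of $F(u)$: the optimal choice $p'=1$ is accessible for $n\le 8$ because $(n+8)/n\ge 2$ puts $F(u)$ into $L^1_{t,x}$ via the Strichartz interpolation above, whereas for $n>8$ the exponent $(n+8)/n<2$ forces $p'>1$ and the weaker decay $A^{-8}$ purchased by Riesz--Thorin.
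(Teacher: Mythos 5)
Your proof is correct and follows essentially the same route as the paper: Whitney decomposition of $Q_{\ge 0}$, the frequency-localized dispersive bound summed over $N\ge A$ to get $\Vert e^{i\tau\Delta^2}P_{\ge A}\Vert_{L^1\to L^\infty}\lesssim A^{-n}\tau^{-n/2}$ (interpolated with unitarity on $L^2$), a case split at $n=8$ driven by whether $F(u)$ can be placed in $L^1_x$, and a convergent geometric sum over scales. The only cosmetic difference is in establishing the bound $\Vert F(u)\Vert_{L^1(J,L^1)}\lesssim_u\langle|J|\rangle$ when $5\le n\le 8$: the paper applies H\"older in time on unit-length subintervals and then bounds the resulting space-time norm by $\Vert u\Vert_{S^0}$, while you interpolate $\Vert u\Vert_{L^{(n+8)/n}_{t,x}}$ between $\Vert u\Vert_{L^2_{t,x}}$ (controlled by mass) and $\Vert u\Vert_{Z}$ (controlled via Lemma~\ref{le:Strichartz-Via-N} and $N\le 1$). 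Both yield the same linear growth in $|J|$ and hence the same $l_k^{(4-n)/2}$ per-cube contribution.
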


\begin{proof}
We first prove the results in dimensions $5\le n\le 8$. In this case, we remark that for any interval $I$, there holds that
\begin{equation}\label{L1BoundOnNonlinearity}
\Vert F(u)\Vert_{L^1(I,L^1)}\lesssim_u \langle\vert I\vert\rangle.
\end{equation}
Indeed, to get \eqref{L1BoundOnNonlinearity}, it suffices to prove that
$$\Vert F(u)\Vert_{L^1(I,L^1)}\lesssim_u 1$$
whenever $\vert I\vert\le 1$. but in this case, we have that
\begin{equation*}
\begin{split}
\Vert F(u)\Vert_{L^1(I,L^1)}&\lesssim \vert I\vert^\frac{n}{8}\Vert F(u)\Vert_{L^\frac{8}{8-n}(I,L^1)}\\
&\lesssim \vert I\vert^\frac{n}{8}\Vert u\Vert_{L^\frac{8(n+8)}{(8-n)n}(I,L^\frac{n+8}{n})}^\frac{n+8}{n}\\
&\lesssim \vert I\vert^\frac{n}{8}\Vert u\Vert_{S^0(I)}^\frac{n+8}{n}\\
&\lesssim_u 1
\end{split}
\end{equation*}
where in the last line, we have use \eqref{BoundOnNonlinearTermByH} and its consequence.
Then, using \eqref{DecayEstimate}, we get that, if $\vert t^\prime-s\vert\simeq l_k=R2^k$, letting $Q_k=I_k\times J_k$ and using \eqref{L1BoundOnNonlinearity}, we get that
\begin{equation*}
\begin{split}
&\left\vert\int\int_{Q_k}\langle e^{i(t^\prime-s)\Delta^2}P_{\ge A}F(u(s)),P_{\ge A}F(u(t^\prime))\rangle dsdt^\prime\right\vert\\
&\lesssim l_k^{-\frac{n}{2}}A^{-n}\int\int_{Q_k}\Vert F(u(s))\Vert_{L^1}\Vert F(u(t^\prime))\Vert_{L^1}\\
&\lesssim l_k^{-\frac{n}{2}}A^{-n}\Vert F(u)\Vert_{L^1(I_k,L^1)}\Vert F(u)\Vert_{L^1(J_k,L^1)}\\
&\lesssim_u l_k^{-\frac{n}{2}}A^{-n}\langle\vert I_k\vert\rangle\langle\vert J_k\vert\rangle\\
&\lesssim_u l_k^{\frac{4-n}{2}}A^{-n}
\end{split}
\end{equation*}
and since for each $k\ge 0$, there are exactly three such intervals, summing over $k$, we get \eqref{EForQ_0}. Now we turn to the case $n\ge 8$. In this case, the nonlinear term is in a Lebesgues space $L^{p^\prime}$ for $p^\prime=2n/(n+8)\ge 1$. Using \eqref{DecayEstimate} and conservation of Mass, we get that
\begin{equation*}
\begin{split}
&\left\vert\langle e^{i(t-s)\Delta^2}P_{\ge A}F(u(s)),P_{\ge A}F(u(t))\rangle\right\vert\\
&\lesssim \Vert e^{i(t-s)\Delta^2}P_{\ge A}F(u(t))\Vert_{L^p}\Vert P_{\ge A}F(u(s))\Vert_{L^{p^\prime}}\\
&\lesssim \vert t-s\vert^{-4}A^{-8}\Vert P_{\ge A}F(u(s))\Vert_{L^{p^\prime}}\Vert P_{\ge A}F(u(t))\Vert_{L^{p^\prime}}\\
&\lesssim_{M(u)} \vert t-s\vert^{-4}A^{-8}.
\end{split}
\end{equation*}
Integrating over $Q_k^i$, this gives
$$\left\vert\int\int_{Q_{k}^i}\langle e^{i(t^\prime-s)\Delta^2}P_{\ge A}F(u(s)),P_{\ge A}F(u(t^\prime))\rangle dsdt^\prime\right\vert\lesssim_u l_k^{-2}A^{-8}$$
and summing over all $k$, we get \eqref{EForQ_0} when $n\ge 8$. This ends the proof of Lemma \ref{EstimateForQ_0}.
\end{proof}
Lemmas \ref{DDFLemma} and \ref{EstimateForQ_0} give with Strichartz estimates that
\begin{equation}\label{EstimateForMass}
\Vert P_{\ge A}u(t)\Vert_{L^2}^2\lesssim_u A^{-5}+\Vert P_{\ge A}F(u)\Vert_{N([t-R,t])}\Vert P_{\ge A}F(u)\Vert_{N([t,t+R])}.
\end{equation}
To prove Proposition \ref{GainOfRegularityScenario23}, we introduce some more notations
\begin{equation}\label{DefMZN}
\begin{split}
&\mathcal{M}(A)=\Vert P_{\ge A}u\Vert_{L^\infty(\mathbb{R},L^2)}\\
&\mathcal{Z}(A)=\sup_I\Vert P_{\ge A}u\Vert_{Z(I)}\\
&\mathcal{N}(A)=\sup_{I}\Vert P_{\ge A}F(u)\Vert_{N(I)}
\end{split}
\end{equation}
where the supremum are taken on all intervals $I$ of length $\vert I\vert\le R$.
\begin{proof}[Proof of Proposition \ref{GainOfRegularityScenario23}]
We set $R=1$. With this choice of $\mathcal{M}$, $\mathcal{Z}$ and $\mathcal{N}$ in \eqref{DefMZN}, \eqref{Hyp1} follows from Strichartz estimates, and \eqref{Hyp2} follows from conservation of Mass, $N\le 1$, \eqref{BoundOnNonlinearTermByH} and Sobolev's inequality.
By the hypothesis on $N$, we get \eqref{LocalBoundForSNorm} from \eqref{BoundOnNonlinearTermByH} and also that  \eqref{Hyp3} is satisfied. Independently, \eqref{Hyp4} and \eqref{Hyp5} follow from \eqref{EstimateForMass} and \eqref{Hyp1}, while \eqref{NonlinearEstimates} is a consequence of Lemma \ref{Lem1} with the choice of $\mathcal{Z}$ and $\mathcal{N}$ in \eqref{DefMZN}. Applying Corollary \ref{GainOfRegCor}, we then get that $\mathcal{M}(A)\lesssim A^{-2-\frac{1}{n+4}}$, and summing over all frequencies, this gives that $u(0)\in H^{2+\frac{1}{n+5}}$. This ends the proof.
\end{proof}

This regularity result, combined with conservation of Energy and Mass allow to disprove the second scenario (i.e. a strong solution $u$ which remains compact up to rescaling cannot change scale). This is done in the following proposition.
\begin{proposition}\label{NoCascadeProp}
Let $n\ge 1$ and $u\in C(\mathbb{R},H^{2+\frac{1}{n+5}})$ be a solution of \eqref{4NLS} such that \eqref{CompactnessImage} holds true with $N\le 1$ and $u\ne 0$. Suppose there exists a sequence of times $t_k$ such that $N(t_k)\to 0$. Then $M(u)\ge M(Q)$ and $\lambda<0$. In particular, a high-to-low cascade scenario is not possible when $n\ge 5$, in the defocusing case or in the focusing case when $M_{max}<M(Q)$.
\end{proposition}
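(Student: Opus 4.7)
My plan is a proof by contradiction. Assume $u\ne 0$ together with $\lambda=1$, or $\lambda=-1$ with $M(u)<M(Q)$. The sharp Gagliardo--Nirenberg inequality \eqref{Gag} and conservation of Energy \eqref{DefOfEnergy} together yield a uniform-in-time bound $\|\Delta u(t)\|_{L^2}\le C(u)$ and the strict positivity $E(u_0)>0$ (since the relevant Gagliardo--Nirenberg lower bound is saturated only when $u_0=0$). The whole contradiction reduces to showing $\|\Delta u(t_k)\|_{L^2}\to 0$: Gagliardo--Nirenberg then gives $\|u(t_k)\|_{L^{2(n+4)/n}}^{2(n+4)/n}\lesssim_u \|\Delta u(t_k)\|_{L^2}^2\to 0$, and conservation of Energy forces $E(u_0)=\lim_k E(u(t_k))=0$, contradicting the strict positivity.

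Two ingredients drive the proof of $\|\Delta u(t_k)\|_{L^2}\to 0$. First, precompactness of $K$ in $L^2$ yields uniform frequency tightness: given $\varepsilon>0$, there exists $R=R(\varepsilon)$ with $\|P_{>R}v(t)\|_{L^2}<\varepsilon$ uniformly in $t$, which under the scaling identity $P_N g_{(N(t),y(t))}=g_{(N(t),y(t))}P_{N/N(t)}$ becomes $\|P_{>RN(t)}u(t)\|_{L^2}<\varepsilon$ for every $t$. Second, while the hypothesis only states $u\in C(\mathbb R,H^{2+1/(n+5)})$, the proof of Proposition \ref{GainOfRegularityScenario23}, via Corollary \ref{GainOfRegCor}, actually delivers the uniform bound $\sup_t\||\nabla|^{2+1/(n+5)}u(t)\|_{L^2}\lesssim_u 1$, obtained by summing the Littlewood--Paley estimate $\mathcal{M}(A)\lesssim A^{-2-1/(n+4)}$ over dyadic frequencies.

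With these ingredients in hand, I would decompose at the threshold $RN(t_k)$,
\[
\|\Delta u(t_k)\|_{L^2}^2\simeq \sum_{N\le RN(t_k)}N^4\|P_Nu(t_k)\|_{L^2}^2+\sum_{N>RN(t_k)}N^4\|P_Nu(t_k)\|_{L^2}^2.
\]
The low-frequency piece is bounded by $(RN(t_k))^4 M(u)$, which vanishes as $k\to\infty$ with $R$ fixed, since $N(t_k)\to 0$. For the high-frequency piece, setting $\theta=2(n+5)/(2n+11)$ so that $N^4=(N^{4+2/(n+5)})^\theta$, H\"older in the $N$-sum yields
\[
\sum_{N>RN(t_k)}N^4\|P_Nu(t_k)\|_{L^2}^2\le \Bigl(\sum_N N^{4+\frac{2}{n+5}}\|P_Nu(t_k)\|_{L^2}^2\Bigr)^{\!\theta}\Bigl(\sum_{N>RN(t_k)}\|P_Nu(t_k)\|_{L^2}^2\Bigr)^{\!1-\theta}\!\!\lesssim_u \varepsilon^{\frac{2}{2n+11}}.
\]
Letting $k\to\infty$ first, then $\varepsilon\to 0$, gives $\|\Delta u(t_k)\|_{L^2}\to 0$ as desired.

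The main obstacle is this high-frequency step: converting $L^2$-smallness at high frequencies into $\dot H^2$-smallness is possible only because of the strict gain of regularity beyond $H^2$ supplied by Section \ref{Sec-Abs}, and it is through this gain that the dimensional restriction $n\ge 5$ enters. Without any such extra fractional derivative, the high-frequency contribution to $\|\Delta u(t_k)\|_{L^2}^2$ would carry only a bounded, not vanishing, tail, and the scheme would break down.
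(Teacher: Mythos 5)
Your argument is correct and follows essentially the same route as the paper: a frequency split, interpolation of the precompactness tail against the uniform $H^{2+1/(n+5)}$ bound supplied by Corollary \ref{GainOfRegCor}, the hypothesis $N(t_k)\to 0$ to kill the low frequencies, and conservation of Energy plus the sharp Gagliardo--Nirenberg inequality \eqref{Gag} to conclude $E(u)=0$ and derive the contradiction. The differences --- a moving threshold $RN(t_k)$ rather than the paper's fixed threshold $L$ (which merely swaps which piece vanishes with $k$ and which is small-but-fixed), and your explicit remark that continuity in $H^{2+1/(n+5)}$ alone would not suffice without the uniform-in-time bound that the paper invokes implicitly --- are cosmetic.
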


\begin{proof}
For $k\ge 0$, we split $u(t_k)$ into its high and low frequency components
$$u(t_k)=P_{\le L}u(t_k)+P_{>L}u(t_k)=u_l(t_k)+u_h(t_k).$$
By the hypothesis on $N$, we get that
\begin{equation*}
\lim_{k\to +\infty}\Vert P_{>L}u(t_k)\Vert_{L^2}\le \lim_{k\to+\infty}\sup_{v\in K}\Vert P_{>LN(t_k)^{-1}}v\Vert_{L^2}=0.
\end{equation*}
As a consequence, using Bernstein estimates \eqref{BernSobProp}, conservation of Mass and interpolation, we get that
\begin{equation*}
\begin{split}
\Vert \Delta u(t_k)\Vert_{L^2}
&\lesssim \Vert \Delta u_l(t_k)\Vert_{L^2}+\Vert\Delta u_h(t_k)\Vert_{L^2}\\
&\lesssim L^2M(u)+\Vert u_h(t_k)\Vert_{L^2}^\frac{1}{2n+11}\Vert u_h(t_k)\Vert_{H^{2+\frac{1}{n+5}}}^\frac{2(n+5)}{2n+11}\lesssim_{M(u)}L^2+o(1),
\end{split}
\end{equation*}
where $o(1)\to 0$ as $k\to+\infty$. Letting $k\to +\infty$ followed by $L\to 0$, we get that $\Vert \Delta u(t_k)\Vert_{L^2}\to 0$. Using conservation of Mass and the Gagliardo-Nirenberg inequality \eqref{Gag}, this gives that $E(u(t_k))\to 0$ as $k\to +\infty$, and by conservation of Energy, we conclude that $E(u)=0$. In the defocusing case, this is not possible if $0\ne u\in H^2$. In the focusing case, using the sharp Gagliardo-Nirenberg inequality \eqref{Gag} again, we conclude that $M(u)\ge M(Q)$. The last statement follows since Proposition \ref{GainOfRegularityScenario23} gives us the regularity required.
\end{proof}

To disprove the Soliton case, we use a Virial/Morawetz type of estimate.
For $e_1$ a unit vector in $\mathbb{R}^n$ we define the orthogonal Virial action along $e_1$ by
\begin{equation}\label{DefA}
A_R(t)=2\hbox{Im}\int_{\mathbb{R}^n}a(\frac{z_1}{R})z_1\partial_1u(t,x)\bar{u}(t,x)dx
\end{equation}
where $R>0$ will be chosen later on, $a\in C^\infty(\mathbb{R})$ is even and satisfies $a(x)=1$ for $\vert x\vert\le 1$, $a(x)=0$ for $\vert x\vert\ge 2$ and $a^\prime(x)\le 0$ for $x\ge 0$, $z=x-y(t)$ is the space variable in the frame moving with $u$, and $z_1$ is its coordinate in the direction of $e_1$. By \eqref{VariationOfH}, we have that $y(t)$ is a smooth function and $\vert \dot{y}\vert\lesssim_u1$. A trivial estimate gives that
\begin{equation}\label{BoundOnA}
\vert A_R(t)\vert\lesssim R \Vert u\Vert_{\dot{H}^1}^2\lesssim_u R
\end{equation}
uniformly in $t$, $R$.

\begin{proposition}\label{NoSolitonDefocusing}
Let $n\ge 1$ and $u\in C(\mathbb{R},H^{2+\frac{1}{n+5}})$ be a solution of \eqref{4NLS} such that \eqref{CompactnessImage} holds true with $N=1$.
Then
\begin{equation}\label{VirialArgument}
\partial_t A_R= 2\dot{y}_1e_1\cdot\hbox{Mom}(u)-16\int_{\mathbb{R}^n}\left(\frac{\vert\partial_1\nabla u\vert^2}{2}+\frac{\lambda}{2(n+4)}\vert u\vert^\frac{2(n+4)}{n}\right)dx+o_u(1)
\end{equation}
where $o_u(1)\to0$ as $R\to +\infty$ uniformly in $t$. In particular,
in the defocusing case $\lambda=1$, if $n\ge 5$, the Soliton-like scenario is not possible.
\end{proposition}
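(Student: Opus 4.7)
The plan is to differentiate $A_R(t)$ directly, identify the three sources of contributions, and then in the defocusing case exploit the sign of the bulk part together with the a priori bound \eqref{BoundOnA} to reach a contradiction.

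Writing $z = x - y(t)$ and $A_R = 2\operatorname{Im}\int a(z_1/R)\, z_1\, \partial_1 u\, \bar u\, dx$, the time derivative splits into two pieces. From $\partial_t z_1 = -\dot y_1$, differentiating the weight produces a contribution which, once the cutoff is removed formally, equals $-2\dot y_1 \operatorname{Im}\int \partial_1 u\,\bar u\,dx = 2\dot y_1\, e_1\cdot\operatorname{Mom}(u)$, using $\operatorname{Im}(u\partial_1\bar u)=-\operatorname{Im}(\partial_1 u\,\bar u)$. Substituting $i\partial_t u = -\Delta^2 u - \lambda|u|^{8/n}u$ into the remaining piece and integrating by parts (four derivatives to rearrange in the biharmonic part, one in the nonlinear part) yields the standard directional biharmonic Virial identity, producing the bulk terms $-8\int|\partial_1\nabla u|^2\,dx - \tfrac{8\lambda}{n+4}\int|u|^{2(n+4)/n}\,dx$ of \eqref{VirialArgument}.

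Every discrepancy between this formal computation and $\partial_t A_R$ is supported in $\{|z_1|\ge R\}$ and involves a product of $u$ with at most two of its derivatives (or a power $|u|^{2(n+4)/n}$ from the nonlinearity), weighted by some $R^{-k}a^{(k)}(z_1/R)$. Here the regularity $u(t)\in H^{2+\frac{1}{n+5}}$ from Proposition \ref{GainOfRegularityScenario23} is essential: it upgrades $K$ to a precompact subset of $H^{2+\frac{1}{n+5}}$, hence by Sobolev embedding (for $n\ge 5$) also of $L^{\frac{2(n+4)}{n}}$, so that uniformly in $t$
\begin{equation*}
\int_{|z_1|\ge R}\bigl(|\Delta u|^2 + |\nabla u|^2 + |u|^2 + |u|^{\frac{2(n+4)}{n}}\bigr)(t,z+y(t))\,dz \longrightarrow 0
\end{equation*}
as $R\to\infty$. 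Combined with $|\dot y|\lesssim_u 1$ from \eqref{VariationOfH}, this gives $o_u(1)$ control on all cutoff errors and establishes \eqref{VirialArgument}.

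To exclude the soliton in the defocusing case $\lambda=1$, I choose $e_1\perp\operatorname{Mom}(u)$, permissible because $n\ge 2$; the momentum term in \eqref{VirialArgument} then vanishes. Precompactness of $K$ in $L^{\frac{2(n+4)}{n}}$ and non-triviality of $u$ imply $\|u(t)\|_{L^{2(n+4)/n}}\simeq_u 1$ uniformly, since any sequence along which this norm vanished would, by compactness, produce a nonzero element of $K$ of zero $L^{2(n+4)/n}$-norm, a contradiction. Hence $\partial_t A_R \le -c_u + o_u(1)$ for some $c_u>0$. Choosing $R$ so that the $o_u(1)$ is at most $c_u/2$ and integrating over $[0,T]$, we obtain $|A_R(T)-A_R(0)|\ge c_u T/2$, which contradicts $|A_R(t)|\lesssim_u R$ from \eqref{BoundOnA} as $T\to\infty$.

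The main obstacle I anticipate is the bookkeeping of cutoff errors in the biharmonic part of the identity, which are numerous and involve distributions of up to three derivatives across $u$ and the cutoff weight; the extra regularity coming from Proposition \ref{GainOfRegularityScenario23} is precisely what makes each such error integrable and, through uniform equi-integrability on the precompact orbit, vanishing in the large-$R$ limit. This is also where the restriction $n\ge 5$ enters, since that regularity is only available via the "Double Duhamel" argument of Section \ref{Sec-Abs} in high enough dimension.
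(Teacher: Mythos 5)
Your proof is correct and follows essentially the same route as the paper: differentiate $A_R$, integrate by parts to isolate the momentum term, the definite-sign bulk term, and cutoff errors supported in $\{|z_1|\gtrsim R\}$; control the errors using the extra regularity from Proposition~\ref{GainOfRegularityScenario23} together with the compactness of $K$; then for $\lambda=1$, $n\ge 2$, choose $e_1\perp\hbox{Mom}(u)$, integrate the resulting inequality and contradict the bound $|A_R|\lesssim_u R$. The only cosmetic difference is in the error estimate: you upgrade $K$ to a precompact subset of $H^{2+\frac{1}{n+5}}$ (which is legitimate, since $\mathcal{M}(A)\lesssim A^{-2-\frac{1}{n+4}}$ gives a uniform bound strictly above that regularity, so $L^2$-precompactness interpolates up), whereas the paper works directly with $L^2$-precompactness of $K$, a uniform $H^{2+\frac{1}{n+5}}$ bound, and interpolation to $H^2$ on the weighted solution; likewise your pointwise lower bound $\|u(t)\|_{L^{2(n+4)/n}}\simeq_u 1$ replaces the paper's appeal to the integrated bound of Lemma~\ref{le:Strichartz-Via-N} — both yield the same conclusion.
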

\begin{proof}
A similar proof appears in Pausader \cite{PauBeam2}. Since $u(0)\in H^2$, $u$ has a conserved momentum as in \eqref{DefOfMomentum}. Using that $u$ is bounded in $H^2$, integration by parts yields that
\begin{equation}\label{VirialEstProof}
\begin{split}
\partial_tA_R
&=-\dot{y}_12\hbox{Im}\int_{\mathbb{R}^n}\left(a^\prime\frac{z_1}{R}\partial_1u\bar{u}+a\partial_1u\bar{u}\right)dx\\
&-4\hbox{Im}\int_{\mathbb{R}^n}az_1\partial_1\bar{u}u_tdx-2\hbox{Im}\int_{\mathbb{R}^n}a\bar{u}u_tdx-2\hbox{Im}\int_{\mathbb{R}^n}a^\prime\frac{z_1}{R}\bar{u}u_tdx\\
&=2\dot{y}_1e_1\cdot\hbox{Mom}(u)-2\dot{y}\hbox{Im}\int_{\mathbb{R}^n}\left(a^\prime\frac{z_1}{R}\partial_1u\bar{u}+(a-1)\partial_1u\bar{u}\right)dx+O_u(\frac{1}{R})\\
&-16\int_{\mathbb{R}^n}\left(a+a^\prime\frac{z_1}{R}\right)\left(\frac{\vert\partial_1\nabla u\vert^2}{2}+\frac{\lambda}{2(n+4)}\vert u\vert^\frac{2(n+4)}{n}\right)dx\\
&=2\dot{y}_1e_1\cdot\hbox{Mom}(u)-16\int_{\mathbb{R}^n}\left(\frac{\vert\partial_1\nabla u\vert^2}{2}+\frac{\lambda}{2(n+4)}\vert u\vert^\frac{2(n+4)}{n}\right)dx+O_u(\frac{1}{R})\\
&-2\dot{y}\hbox{Im}\int_{\mathbb{R}^n}\left(a^\prime\frac{z_1}{R}\partial_1u\bar{u}+(a-1)\partial_1u\bar{u}\right)dx\\
&-16\int_{\mathbb{R}^n}\left(a+a^\prime\frac{z_1}{R}-1\right)\left(\frac{\vert\partial_1\nabla u\vert^2}{2}+\frac{\lambda}{2(n+4)}\vert u\vert^\frac{2(n+4)}{n}\right)dx.
\end{split}
\end{equation}
Independently, by \eqref{CompactnessImage}, we see that
\begin{equation*}\label{DefOfEpsilon}
\varepsilon(R)=\sup_{v\in K}\int_{\vert x\vert\ge R}\vert v(x)\vert^2dx=\sup_{t\in\mathbb{R}}\int_{\vert x-y(t)\vert\ge R}\vert u(t,x)\vert^2dx\to 0\hskip.1cm\hbox{as}\hskip.1cm R\to +\infty.
\end{equation*}
Consequently, using Sobolev's inequality and interpolation, we see that the last two lines in \eqref{VirialEstProof} above can be bounded by
\begin{equation*}
\begin{split}
(1+\vert\dot{y}\vert)\Vert (1-a+\vert a^\prime\frac{z}{R}\vert)u\Vert_{H^2}&\lesssim \Vert (1-a+\vert a^\prime\frac{z}{R}\vert)u\Vert_{L^2}^\frac{1}{2n+11}\Vert (1-a+\vert a^\prime\frac{z}{R}\vert)u\Vert_{H^\frac{2n+11}{n+5}}^\frac{2n+10}{2n+11}\\
&\lesssim_u \varepsilon(R)^\frac{1}{2n+11}\to 0
\end{split}
\end{equation*}
as $R\to +\infty$. This gives \eqref{VirialArgument}.
When $n\ge 2$, we can choose a vector $e_1$ which is orthogonal to $\hbox{Mom}(u)$. In this case the first term in the right hand side of \eqref{VirialArgument} vanishes. Now, suppose we are in the defocusing case $\lambda=1$, and $u$ is a Soliton-like solution as in Theorem \ref{3ScenariosThm}. Proposition \ref{GainOfRegularityScenario23} gives us the required regularity.
Then, choosing $R$ sufficiently large so that $\vert o_u(R)\vert\le \delta$ for some $\delta>0$ to be chosen below, and integrating, we get using \eqref{BoundOnNonlinearTermByH} that
\begin{equation*}
\begin{split}
\vert A_R(t) -A_R(0)\vert & \ge\int_0^t\left(\frac{8}{n+4}\int_{\mathbb{R}^n}\vert u(s,x)\vert^\frac{2(n+4)}{n}dx-\delta\right)ds\\
&\gtrsim \int_0^t h(s)^{-4}ds-C\delta t=(1-C\delta)t
\end{split}
\end{equation*}
for some constant $C$ depending only on $u$ and $n$. Choosing $\delta>0$ sufficiently small so that $C\delta<1$ and then $t$ sufficiently large, we contradict \eqref{BoundOnA}. Thus a soliton-like solution is not possible.
\end{proof}
Finally using the material developed above, we can finish the proof of Theorem \ref{MainThm}.
\begin{proof}[Proof of Theorem \ref{MainThm}] Using Proposition \ref{LocExProp}, we see that it suffices to prove that $M_{max}=+\infty$. Suppose it is not so. Then, applying Theorem \ref{3ScenariosThm}, we get that one of the 3 possible scenario in Theorem \ref{3ScenariosThm} holds. However, Propositions \ref{nonexistenceOfSelfSimilarBlowUp}, \ref{NoCascadeProp} and \ref{NoSolitonDefocusing} prove that this is not possible. Hence $$M_{max}=+\infty$$ and Theorem \ref{MainThm} is proved.
\end{proof}


\section{The Focusing case}\label{SecFoc}
In this section, we study the focusing variant of \eqref{4NLS}, that is, the case $\lambda=-1$.
\subsection{the Momentum and the translation parameter}
We first start with a proposition relating the translation function $y$, the Momentum and the Energy of a Soliton-like solution in the general case.
\begin{proposition}\label{SolitonImpliesNonZeroMomentum}
Let $n\ge 1$ and $u\in C(\mathbb{R},H^\frac{2n+11}{n+5})$ be a solution of \eqref{4NLS} such that \eqref{CompactnessImage} holds true with $N(t)=1$ and $y(0)=0$.
Then, for all $\varepsilon>0$, there exists a $C_{\varepsilon,u}$ such that
\begin{equation}\label{NonZeroMomentumEstimate}
\left\vert y(t)\cdot\hbox{Mom}(u)-8E(u)t \right\vert \le C_{\varepsilon, u}+\varepsilon t
\end{equation}
for all $t>0$. In particular, $y(t)\cdot\hbox{Mom}(u)\simeq t$ if $M(u)<M(Q)$.
\end{proposition}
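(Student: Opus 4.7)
The plan is to derive a full Virial-type identity by summing the single-direction identity \eqref{VirialArgument} over all $n$ coordinate directions, instead of restricting to a single direction orthogonal to $\hbox{Mom}(u)$ as in Proposition \ref{NoSolitonDefocusing}. Concretely, I would let $A_R^{(i)}$ be the quantity in \eqref{DefA} with $e_1$ replaced by the $i$-th coordinate vector, and set $B_R(t):=\sum_{i=1}^n A_R^{(i)}(t)$. Running the computation leading to \eqref{VirialArgument} in each direction and summing gives
\begin{equation*}
\partial_t B_R(t) = 2\dot{y}(t)\cdot \hbox{Mom}(u) - 16\int_{\R^n}\left(\frac{|D^2 u|^2}{2} + \frac{n\lambda}{2(n+4)}|u|^{\frac{2(n+4)}{n}}\right)dx + o_u(1),
\end{equation*}
where $|D^2u|^2=\sum_{i,j}|\partial_i\partial_j u|^2$ and $o_u(1)\to 0$ as $R\to\infty$ uniformly in $t$. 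Since $u(t)\in H^2$ (as $(2n+11)/(n+5)>2$), integration by parts gives $\int|D^2u|^2=\int|\Delta u|^2$, so the parenthesis collapses to $E(u)$ and
\begin{equation*}
\partial_t B_R(t) = 2\dot{y}(t)\cdot \hbox{Mom}(u) - 16E(u) + o_u(1).
\end{equation*}

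Next, I would integrate from $0$ to $t$. Using that $\hbox{Mom}(u)$ is conserved by Proposition \ref{LocExProp} together with $y(0)=0$, the time integral of $2\dot{y}(s)\cdot\hbox{Mom}(u)$ equals $2y(t)\cdot\hbox{Mom}(u)$, so
\begin{equation*}
2y(t)\cdot\hbox{Mom}(u) - 16 E(u)\, t = B_R(t) - B_R(0) - \int_0^t o_u(1)\, ds.
\end{equation*}
The a priori bound \eqref{BoundOnA} gives $|B_R(t)-B_R(0)|\lesssim_u R$, and for any prescribed $\varepsilon>0$ I can choose $R=R(\varepsilon,u)$ large enough that $|o_u(1)|\le 2\varepsilon$ uniformly in $t$. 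Dividing through by $2$ and rearranging then yields \eqref{NonZeroMomentumEstimate} with $C_{\varepsilon,u}\lesssim_u R(\varepsilon,u)$.

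For the concluding comparability $y(t)\cdot\hbox{Mom}(u)\simeq t$ under $M(u)<M(Q)$, the sharp Gagliardo-Nirenberg inequality \eqref{Gag} in the focusing case forces
\begin{equation*}
E(u)\ge \tfrac12\bigl(1-(M(u)/M(Q))^{8/n}\bigr)\|\Delta u\|_{L^2}^2 > 0,
\end{equation*}
where strict positivity uses $u\not\equiv 0$, guaranteed since $N(t)=1$ combined with \eqref{JHasNonntrivialZnorm} forces nontrivial $Z$-norm. Applying \eqref{NonZeroMomentumEstimate} with $\varepsilon=4E(u)$ then sandwiches $y(t)\cdot\hbox{Mom}(u)$ between $4E(u)t - C$ and $12E(u)t + C$, which is the asserted equivalence for $t$ large. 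The main technical point I anticipate is verifying uniformity in $t$ of the error $o_u(1)$ after summing over $i$; this reduces to the tail-in-space control $\varepsilon(R)\to 0$ used in the proof of Proposition \ref{NoSolitonDefocusing}, which is preserved under the summation since the compactness of $K$ via \eqref{CompactnessImage} is inherently uniform in $t$ once one works in the moving frame $z=x-y(t)$.
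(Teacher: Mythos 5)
Your proof is correct and follows essentially the same route as the paper: both sum the single-direction Virial identity \eqref{VirialArgument} over an orthonormal basis, use $\int|D^2u|^2=\int|\Delta u|^2$ to collapse the resulting bulk term into $16E(u)$, integrate in time against the a priori bound \eqref{BoundOnA}, and conclude the comparability $y(t)\cdot\hbox{Mom}(u)\simeq t$ via the sharp Gagliardo--Nirenberg inequality. Your version merely spells out a few details the paper leaves implicit (the integration by parts, the explicit choice $\varepsilon=4E(u)$, and the nontriviality of $u$ via \eqref{JHasNonntrivialZnorm}).
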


\begin{proof} Choose an orthonormal basis $(e_1,\dots,e_n)$ of $\mathbb{R}^n$, and consider all the Virial actions corresponding to these vectors, and sum the corresponding contributions to \eqref{VirialArgument} to get
\begin{equation*}\label{VirialArgument2}
\partial_t A_R= 2\dot{y}\cdot\hbox{Mom}(u)-16\int_{\mathbb{R}^n}\left(\frac{\vert\Delta u\vert^2}{2}+\lambda\frac{n}{2(n+4)}\vert u\vert^\frac{2(n+4)}{n}\right)dx+o_u(1).
\end{equation*}
Integrating from $0$ to $t$ and using \eqref{BoundOnA}, we get \eqref{NonZeroMomentumEstimate}.
\end{proof}
From this we deduce that the equivalent of Theorem \ref{MainThm} also holds in the focusing case provided that the initial data is radially symmetrical and that the Mass of $u$ is below that of the Ground State, $M(u)<M(Q)$.

\begin{proof}[Proof of Theorem \ref{FocusingRadialThm}]
Indeed, in the radially symmetrical case, we get that failure of Theorem \ref{FocusingRadialThm} would imply that $M_{max}^{rad}<M(Q)$, and hence the existence of a solution $u\in S_{loc}(I)$ satisfying one of the three scenarios in Theorem \ref{3ScenariosThm} which is radially symmetrical. Proposition \ref{nonexistenceOfSelfSimilarBlowUp} shows that a self-similar blow-up is impossible. Proposition \ref{NoCascadeProp} shows that a cascade scenario is impossible, and Propositions \ref{GainOfRegularityScenario23} and \ref{SolitonImpliesNonZeroMomentum} shows that in the last scenario, there holds that $\hbox{Mom}(u)\ne 0$. But this contradicts the fact that $u$ is radially symmetrical. Hence none of the scenario in Theorem \ref{3ScenariosThm} is possible and Theorem \ref{FocusingRadialThm} holds true.
\end{proof}

\subsection{Motion of Mass}
In this subsection we study how the local Mass is dispersed by the solution, in the frame moving with the center of Mass to get the equality \eqref{InterVirEst} involving various quantities related to $u$. More precisely, we prove the following proposition

\begin{proposition}\label{AddedPropFoc}
Let $n\ge 1$ and $u\in C(\mathbb{R},H^\frac{2n+11}{n+5})$ be a solution of \eqref{4NLS} such that \eqref{CompactnessImage} holds true with $N(t)=1$ and $E(u)>0$.
Then, for all $\varepsilon>0$, and all intervals $I$ sufficiently large depending on $\varepsilon$, there holds that
\begin{equation}\label{InterVirEst}
2M(u)E(u)+\hbox{Mom}(u)\cdot\hbox{Im}\left(\frac{1}{\vert I\vert}\int_I\int_{\mathbb{R}^n}\nabla\bar{u}\Delta udx\right)=O(\varepsilon).
\end{equation}
In particular $u$ has nonzero Momentum and average Mass current.
\end{proposition}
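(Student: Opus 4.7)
The plan is to derive two independent averaged Virial-type identities on a long interval $I=[T_0,T_1]$ and combine them linearly to isolate \eqref{InterVirEst}.

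For the first identity, I would sum \eqref{VirialArgument} over an orthonormal basis of $\mathbb{R}^n$. Using $\sum_j\int|\partial_j\nabla u|^2\,dx=\int|\Delta u|^2\,dx$ together with \eqref{DefOfEnergy}, this gives, for $A_R^{\mathrm{tot}}(t)=2\hbox{Im}\int a(|x-y(t)|/R)\,(x-y(t))\cdot\nabla u\,\bar u\,dx$,
$$\partial_t A_R^{\mathrm{tot}}=2\dot y(t)\cdot\hbox{Mom}(u)-16E(u)+o_u(1),$$
where $o_u(1)\to 0$ as $R\to\infty$ uniformly in $t$. Since $|A_R^{\mathrm{tot}}(t)|\lesssim_u R$ uniformly in $t$, integrating on $I$ and dividing by $|I|$ gives, after choosing first $R$ then $|I|$ sufficiently large,
$$\hbox{Mom}(u)\cdot\frac{y(T_1)-y(T_0)}{|I|}=8E(u)+O(\varepsilon).$$

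For the second identity, I would introduce the truncated, frame-centered first moment
$$V_R(t)=\int_{\mathbb{R}^n}a(|x-y(t)|/R)\,(x-y(t))\,|u(t,x)|^2\,dx,$$
which is uniformly bounded by $RM(u)$. Differentiating in $t$, using the 4NLS mass-continuity law $\partial_t|u|^2=-\nabla\cdot\mathbf{J}$ with $\mathbf{J}=2\hbox{Im}(\bar u\,\nabla\Delta u-\nabla\bar u\,\Delta u)$, and integrating by parts, the terms where a derivative falls on $a$ live on $\{|x-y(t)|\simeq R\}$ and should contribute $o_u(1)$ uniformly in $t$. The remaining main term $\int a\,\mathbf{J}\,dx$ converges in $t$-uniform fashion to $\int\mathbf{J}\,dx=-4\hbox{Im}\int\nabla\bar u\,\Delta u\,dx$ (the last equality by one integration by parts in $x$). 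Averaging as before then yields
$$M(u)\,\frac{y(T_1)-y(T_0)}{|I|}+4\hbox{Im}\Bigl(\frac{1}{|I|}\int_I\int_{\mathbb{R}^n}\nabla\bar u\,\Delta u\,dxdt\Bigr)=O(\varepsilon).$$

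Dotting the second identity with $\hbox{Mom}(u)$ and substituting the first to eliminate $(y(T_1)-y(T_0))/|I|$ produces
$$8M(u)E(u)+4\hbox{Mom}(u)\cdot\hbox{Im}\Bigl(\frac{1}{|I|}\int_I\int_{\mathbb{R}^n}\nabla\bar u\,\Delta u\,dxdt\Bigr)=O(\varepsilon),$$
which is \eqref{InterVirEst} upon dividing by $4$; the ``in particular'' assertion follows at once from $E(u)>0$. The main technical hurdle will be the uniform-in-$t$ decay of the cutoff-derivative errors in $\partial_t V_R$, which requires the tails $\int_{|x-y(t)|\gtrsim R}|\mathbf{J}(t,x)|\,dx$ to vanish uniformly in $t$, i.e.\ precompactness of the orbit $K$ at the level of $\Delta u$; this should be obtained by interpolating the $L^2$-compactness from \eqref{CompactnessImage} against the uniform $H^{(2n+11)/(n+5)}$ bound of the hypothesis, which dominates $H^2$ for $n\ge 1$.
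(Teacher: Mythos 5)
Your proposal reproduces the paper's argument almost exactly, merely packaged differently: the paper works in the single direction $e_1=\hbox{Mom}(u)/|\hbox{Mom}(u)|$ with the scalar truncated first moment $M_R(t)=\int a(z_1/R)z_1|u|^2$, whereas you keep the full vector-valued moment $V_R$ and dot with $\hbox{Mom}(u)$ at the end; likewise, your first identity is precisely what Proposition~\ref{SolitonImpliesNonZeroMomentum} records (summing \eqref{VirialArgument} over a basis). The algebra checks: $\int\mathbf{J}=-4\,\hbox{Im}\int\nabla\bar u\,\Delta u$ gives the correct sign and the final combination of the two identities reproduces \eqref{InterVirEst}.

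There is, however, one imprecision in your treatment of the error term. You say the cutoff errors require $\int_{|x-y(t)|\gtrsim R}|\mathbf J|\,dx$ to vanish uniformly, and assert this is ``precompactness of $K$ at the level of $\Delta u$.'' But as written $\mathbf J=2\hbox{Im}(\bar u\nabla\Delta u-\nabla\bar u\,\Delta u)$ contains $\nabla\Delta u$, which is at the $\dot H^3$ level, and the hypothesis only gives a uniform $H^{2+1/(n+5)}$ bound; interpolating $L^2$-compactness against $H^{2+1/(n+5)}$ does \emph{not} yield tightness for $\nabla\Delta u$. The fix — which the paper performs explicitly in \eqref{RateOfMR} — is to integrate by parts one more time: writing $\hbox{Im}(\bar u\,\partial_j\Delta u)=\partial_j\hbox{Im}(\bar u\,\Delta u)-\hbox{Im}(\partial_j\bar u\,\Delta u)$, the cutoff error $\int(a-1)J_j$ becomes $-2\hbox{Im}\int(\partial_j a)\bar u\,\Delta u-4\hbox{Im}\int(a-1)\partial_j\bar u\,\Delta u$, which involves only $u,\nabla u,\Delta u$ and is therefore $o_u(1)$ by the $H^2$-tightness you invoke. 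With this step supplied, your proof is complete and is the same as the paper's.
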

The fact that $u$ has nonzero Momentum and Mass current tells us that $u$ is somewhat different from the Ground State and is used in the proof of Theorem \ref{NonradialFocusingTheorem} below.

\begin{remark}
The same conclusion follows formally from considering the {\it interaction Virial estimate}
$$V_i(t)=\iint_{\mathbb{R}^n\times\mathbb{R}^n}(x-y)\hbox{Im}(u(t,x)\nabla\bar{u}(t,x))\vert u(t,y)\vert^2dxdy,$$
instead of the local Mass \eqref{DefOfLocalMass} below. This interaction Virial estimate is related to the {\it interaction Morawetz estimate} used in the defocusing, Energy-critical case by Colliander, Keel, Staffilanni, Takaoka and Tao \cite{ColKeeStaTakTao}, Ryckman and Visan \cite{RycVis} and Visan \cite{Vis}.
\end{remark}

\begin{proof}
Without loss of generality, we can assume that $I=[0,T]$ and that the Momentum vector $\hbox{Mom}(u)$ is nonzero and parallel to the first vector $e_1$. We note $\hbox{Mom}(u)={\bf m}(u)e_1$ where ${\bf m}(u)>0$ is a positive quantity.
We define the local moment of Mass as follows: for $a$ as in \eqref{DefA} and $R>0$,
\begin{equation}\label{DefOfLocalMass}
M_R(t)=\int_{\mathbb{R}^n}a(\frac{z_1}{R})z_1\vert u(t,x)\vert^2dx,
\end{equation}
where $z=x-y(t)$.

This quantity allows us to understand how the Mass is (not) dispersed in a frame moving with the solution. We first remark that this local quantity is bounded. Indeed, by H\"older's inequality,
\begin{equation}\label{TrivialBoundOnMomMass}
\vert M_R(t)\vert\lesssim RM(u).
\end{equation}
uniformly in $R$ and $t$.
On the other hand, we can estimate the rate of change of $M_R$ by the following formula
\begin{equation}\label{RateOfMR}
\begin{split}
\partial_tM_R&=\dot{z}_1M(u)+\dot{z}_1\int_{\mathbb{R}^n}(a^\prime\frac{z_1}{R}-(1-a))\vert u\vert^2dx-2\hbox{Im}\int_{\mathbb{R}^n}a z_1\bar{u}\Delta^2udx\\
&=-\dot{y}_1M(u)-4\hbox{Im}\int_{\mathbb{R}^n}\partial_1\bar{u}\Delta udx\\
&+\frac{1}{R^2}\hbox{Im}\int_{\mathbb{R}^n}\left(2a^{\prime\prime\prime}\frac{z_1}{R}+6a^{\prime\prime}\right)\bar{u}\partial_1u dx\\
&+\int_{\mathbb{R}^n}\left((1-a)-a^\prime\frac{z_1}{R}\right)\left(-\dot{z}_1\vert u\vert^2+4\hbox{Im}\partial_1\bar{u}\Delta u\right)dx\\
&=-\dot{y}_1M(u)-4\hbox{Im}\int_{\mathbb{R}^n}\partial_1\bar{u}\Delta udx+\varepsilon(R),
\end{split}
\end{equation}
where $\varepsilon(R)\to 0$ as $R\to +\infty$, uniformly in $t$ by \eqref{CompactnessImage} and the fact that $u(t)$ is bounded in $H^{2+\frac{1}{n+5}}$. After normalizing $y(0)=0$, we integrate this between $0$ and $T$ to obtain
\begin{equation}\label{DiffInMR}
M_R(0)-M_R(T)=M(u)y_1(T)+4\hbox{Im}\int_0^T\int_{\mathbb{R}^n}\partial_1\bar{u}\Delta udx+O(T\varepsilon(R)).
\end{equation}
Let $\epsilon>0$. By Proposition \ref{SolitonImpliesNonZeroMomentum}, there exists $C_\epsilon>0$ such that
$$\vert y_1(t){\bf m}(u)-8E(u)t\vert\le C_\epsilon +\epsilon t.$$
Plugging this in \eqref{DiffInMR}, choosing $R>0$ sufficiently large so that $\varepsilon(R)\le\epsilon$, and dividing by $T$, we get
\begin{equation*}
8\frac{M(u)E(u)}{{\bf m}(u)}+4\hbox{Im}\frac{1}{T}\int_0^T\int_{\mathbb{R}^n}\partial_1\bar{u}\Delta udx=O(\epsilon)+\frac{1}{T}\left(O(R)+C_\epsilon\right).
\end{equation*}
Taking $T$ large enough, we can rewrite it as \eqref{InterVirEst}.
This concludes our proof.

\end{proof}


\subsection{An Inequality ``A la Banica''}
In this subsection, we conclude the proof of Theorem \ref{NonradialFocusingTheorem}. A big drawback of \eqref{4NLS} as compared to the usual Schr\"odinger equation is that the lack of Galilean invariance prevents us from normalizing our solution $u$ to have Momentum $0$, or to have mean position in the frequency space at the origin. As a consequence, we need to optimize the inequalities we rely on with respect to the position in frequency space. In this part, we precise the Sharp Gagliardo-Nirenberg inequality using an idea related to the work in Banica \cite{Ban}, and use it to reach a contradiction if $E(u)>0$.

For $u$ a complex function and $\phi$ a real smooth function, we compute
\begin{equation*}\label{Laplacian}
\begin{split}
\vert \Delta (u(x)e^{i\phi(x)})\vert^2
&=\vert\Delta u\vert^2-2\vert\nabla\phi\vert^2\hbox{Re}(\bar{u}\Delta u)+\vert u\vert^2(\vert\nabla\phi\vert^4+\vert\Delta\phi\vert^2)\\
&+4\vert\nabla u\cdot\nabla\phi\vert^2+2\Delta\phi\nabla\phi\nabla\vert u\vert^2\\
&+4\hbox{Im}(\Delta u\nabla\bar{u}\cdot\nabla\phi)+2\Delta\phi\cdot\hbox{Im}(\Delta u\bar{u})-4\vert\nabla\phi\vert^2\nabla\phi\cdot\hbox{Im}\left(u\nabla\bar{u}\right).
\end{split}
\end{equation*}
Integrating, we get
\begin{equation*}\label{PrecisedGag}
\begin{split}
\int_{\mathbb{R}^n}\vert\Delta(e^{i\phi}u)\vert^2=& \int_{\mathbb{R}^n}\vert\Delta u\vert^2+\int_{\mathbb{R}^n}\vert u\vert^2(\vert\nabla\phi\vert^4-\vert\Delta\phi\vert^2-2\nabla\Delta\phi\cdot\nabla\phi-2\Delta\vert\nabla\phi\vert^2)\\
&+2\int_{\mathbb{R}^n}\vert\nabla\phi\vert^2\vert\nabla u\vert^2+4\int_{\mathbb{R}^n}\vert\nabla u\cdot\nabla\phi\vert^2+4\hbox{Im}\int_{\mathbb{R}^n}(\Delta u\nabla\bar{u}\cdot\nabla\phi)\\
&-2\hbox{Im}\int_{\mathbb{R}^n}\nabla\Delta\phi\cdot\nabla u\bar{u}-4\int_{\mathbb{R}^n}\vert\nabla\phi\vert^2\nabla\phi\cdot\hbox{Im}\left(u\nabla\bar{u}\right)
\end{split}
\end{equation*}
for all $u$ and $\phi$.
and letting $\phi=Xx_1$, this gives
\begin{equation}\label{NonNegativityofPol}
\begin{split}
P_u(X)&=2E(e^{i\phi}u)\\
&=2E(u)-4\left(\hbox{Im}\int_{\mathbb{R}^n}(\Delta \bar{u}\partial_1u)dx\right)X\\
&+\left(2\int_{\mathbb{R}^n}\vert\nabla u\vert^2dx+4\int_{\mathbb{R}^n}\vert\partial_1 u\vert^2dx\right)X^2-4{\bf m}(u)X^3+M(u)X^4.
\end{split}
\end{equation}
Applying the sharp Gagliardo-Nirenberg to $e^{i\phi}u$ gives that
\begin{equation*}
\begin{split}
P_u=2E(e^{i\phi}u)&\ge (1-\kappa)\int_{\mathbb{R}^n}\vert\Delta (e^{i\phi}u)\vert^2dx\\
&\ge(1-\kappa)(P_u-2E(u)+\int_{\mathbb{R}^n}\vert\Delta u\vert^2dx),
\end{split}
\end{equation*}
 with $\kappa=(M(u)/M(Q))^\frac{8}{n}$.
Consequently, we get that
\begin{equation}\label{PosPu}
\kappa P_u\ge (1-\kappa)\left(\int_{\mathbb{R}^n}\vert\Delta u\vert^2dx-2E(u)\right).
\end{equation}

Now, we can finish the proof of Theorem \ref{NonradialFocusingTheorem}.

\begin{proof}[Proof of Theorem \ref{NonradialFocusingTheorem}]

Indeed, if $M_{max}<M(Q)$, and hence the existence of a solution $u\in S_{loc}(I)$ satisfying one of the three scenarios in Theorem \ref{3ScenariosThm}. Proposition \ref{nonexistenceOfSelfSimilarBlowUp} shows that a self-similar blow-up is impossible. Proposition \ref{NoCascadeProp} shows that a cascade scenario is impossible. Hence, we only need to exclude a Soliton. Proposition \ref{GainOfRegularityScenario23} and the sharp Gagliardo-Nirenberg inequality gives that in this case, $u\in C(\mathbb{R},H^\frac{2n+11}{n+5})$ and $E(u)>0$.

\medskip

Let $I_k$ be a sequence of intervals such that \eqref{InterVirEst} holds with $\varepsilon=1/k$, and consider the sequence of polynomials obtained by averaging \eqref{NonNegativityofPol} over $I_k$. More precisely, we consider
\begin{equation*}
\begin{split}
P_k=&2E(u)-4\left(\frac{1}{\vert I_k\vert}\hbox{Im}\int_{I_k}\int_{\mathbb{R}^n}(\Delta \bar{u}\partial_1u)dxdt\right)X\\
&+6\frac{1}{\vert I_k\vert}\left(\int_{I_k}\int_{\mathbb{R}^n}\left(\frac{1}{3}\vert\nabla u\vert^2+\frac{2}{3}\vert\partial_1 u\vert^2\right)dxdt\right)X^2-4{\bf m}(u)X^3+M(u)X^4.
\end{split}
\end{equation*}
All the coefficients of $P_k$ are uniformly bounded, and, by \eqref{PosPu}, the $P_k$ are nonnegative polynomials.
Without loss of generality, we can assume that the $P_k$ converge pointwise to a polynomial
$$P_\ast=2E(u)-4C(u)X+6G(u)X^2-4{\bf m}(u)X^3+M(u)X^4,$$
where
\begin{equation*}\label{DefCDG}
\begin{split}
&C(u)=\lim_{k\to+\infty}\frac{1}{\vert I_k\vert}\int_{I_k}\int_{\mathbb{R}^n}\hbox{Im}(\Delta \bar{u}(t,x)\partial_1u(t,x))dxdt,\\
&G(u)=\lim_{k\to +\infty}\frac{1}{\vert I_k\vert}\int_{I_k}\int_{\mathbb{R}^n}\left(\frac{1}{3}\vert\nabla u\vert^2+\frac{2}{3}\vert\partial_1u\vert^2\right)dxdt\\
\end{split}
\end{equation*}
and $C(u){\bf m}(u)=2E(u)M(u)$ by \eqref{InterVirEst}.
Now, we remark that
\begin{equation}\label{AddedEqt2}
\begin{split}
\frac{1}{2E(u)}P_\ast\left(\left(\frac{2E(u)}{M(u)}\right)^\frac{1}{4}\right)&= 6\left( \frac{G(u)}{\sqrt{2E(u)M(u)}}-1\right)-4\left(\sqrt{\alpha}-\frac{1}{\sqrt{\alpha}}\right)^2\\
&\le 6\left(\frac{G(u)}{\sqrt{2E(u)M(u)}}-1\right)
\end{split}
\end{equation}
for $\alpha={\bf m}(u)/(2E(u)M(u)^3)^\frac{1}{4}$.
Consequently, we get, after averaging \eqref{PosPu} over $I_k$ and letting $k\to +\infty$ and using \eqref{AddedEqt2} that
\begin{equation}\label{AddedEqt3}
6\left(\frac{G(u)}{\sqrt{2E(u)M(u)}}-1\right)\ge\frac{1-\kappa}{\kappa}\left(\frac{D(u)}{2E(u)}-1\right),
\end{equation}where
$$D(u)=\lim_{k\to+\infty}\frac{1}{\vert I_k\vert}\int_{I_k}\int_{\mathbb{R}^n}\vert\Delta u(t,x)\vert^2dxdt.$$
Letting $\Lambda\ge 1$ be such that $D(u)=\Lambda^2 2E(u)$ and remarking that, by H\"older's inequality $G(u)\le\sqrt{M(u)D(u)}$, we get with \eqref{AddedEqt3} that
$$6(\Lambda-1)\ge\frac{1-\kappa}{\kappa}(\Lambda^2-1),$$
which implies that
$$\frac{1-\kappa}{\kappa}\le \frac{6}{\Lambda+1}\le3,$$
and finally that $4\kappa\ge 1$.
This proves that
$$M(u)=M_{max}\ge \frac{1}{4^\frac{8}{n}}M(Q)$$ and Theorem \ref{NonradialFocusingTheorem} holds true.
\end{proof}

\noindent{\bf ACKNOWLEDGEMENT.}
B. Pausader expresses his deep thanks to Emmanuel Hebey for his constant support and for stimulating discussions during the early preparation of this work. S. Shao was supported by National Science Foundation under agreement No. DMS-0635607.

Any opinions, findings and conclusions or recommendations expressed in this paper are those of the authors and do not reflect necessarily the views of the National Science Foundation.

\end{document}